\documentclass[12pt]{preprint}

\usepackage[a4paper,innermargin=1.2in,outermargin=1.2in,
bottom=1.5in,marginparwidth=1in,marginparsep
=3mm]{geometry}
\usepackage[T1]{fontenc} 

\usepackage[greek, greek.ancient, english]{babel} 
\usepackage{amsmath,amsfonts,amsthm} 

\usepackage{shuffle}
\usepackage{amssymb}
\usepackage{fancyhdr} 
\usepackage{comment}
\usepackage{margincomment}
\usepackage{url}
\usepackage{cprotect}
\usepackage{hyperref}
\usepackage{mathrsfs}
\usepackage{mathtools}
\usepackage{mhequ}
\usepackage{mhsymb}
\usepackage{microtype}
\usepackage{colonequals}
\usepackage{braket}

\usepackage{xstring}

\usepackage{orcidlink}

\usepackage{fourier} 

\providecommand{\llbracket}{\left[\!\left[}
\providecommand{\rrbracket}{\right]\!\right]}

\makeatletter
\newcommand{\mbig}[1]{\vcenter{\hbox{\scalebox{1}[1.3]{$#1$}}}}
\makeatother

\renewcommand{\digamma}{\text{\foreignlanguage{greek}{\ddigamma}}}

\colorlet{darkblue}{blue!90!black}
\colorlet{darkred}{red!90!black}
\colorlet{darkgreen}{green!50!black}

\newcommand{\tw}{\mathbin{\#}}

\DeclareMathOperator{\Ent}{Ent}

\DeclareMathOperator{\Var}{Var}
\DeclareMathOperator{\logd}{log^{[1]}}

\newcommand{\RR}{\mathbb{R}}

\let\Re\relax
\DeclareMathOperator{\Re}{\mathfrak{R}}

\DeclareMathOperator{\Lin}{{\mathcal{L}}}

\def\CA{{\cal A}}

\def\CE{{\cal E}}

\def\CH{{\cal H}}
\def\CI{{\cal I}}

\def\CL{{\cal L}}
\def\CM{{\cal M}}

\def\CP{{\cal P}}

\def\CS{{\cal S}}

\def\CW{{\cal W}}

\DeclareMathOperator{\Id}{Id}

\def\op{\mathrm{op}}
\def\Aop{A^{\mathrm{op}}}
\def\mop{\mcb{m}^{\hspace{-.1cm}\mathrm{op}}}
\def\pop{\CP^{\mathrm{op}}}
\def\bop{\bone^{\mathrm{op}}}

\def\bone{\mathbf{1}}

\def\sa{\mathrm{sa}}

\def\eqdef{\coloneqq}

\def\Wick#1{\mathord{\kern0.1em{:}#1{:}\kern0.1em}}

\def\d{\mbox{d}}

\DeclareFontFamily{U}{BOONDOX-calo}{\skewchar\font=45 }
\DeclareFontShape{U}{BOONDOX-calo}{m}{n}{
  <-> s*[1.05] BOONDOX-r-calo}{}
\DeclareFontShape{U}{BOONDOX-calo}{b}{n}{
  <-> s*[1.05] BOONDOX-b-calo}{}
\DeclareMathAlphabet{\mcb}{U}{BOONDOX-calo}{m}{n}
\SetMathAlphabet{\mcb}{bold}{U}{BOONDOX-calo}{b}{n}

\makeatletter 
\newcommand*{\bigcdot}{}
\DeclareRobustCommand*{\bigcdot}{%
  \mathbin{\mathpalette\bigcdot@{}}%
}
\newcommand*{\bigcdot@scalefactor}{.7}
\newcommand*{\bigcdot@widthfactor}{1.15}
\newcommand*{\bigcdot@}[2]{%
  \sbox0{$#1\vcenter{}$}
  \sbox2{$#1\cdot\m@th$}%
  \hbox to \bigcdot@widthfactor\wd2{%
    \hfil
    \raise\ht0\hbox{%
      \scalebox{\bigcdot@scalefactor}{%
        \lower\ht0\hbox{$#1\bullet\m@th$}%
      }%
    }%
    \hfil
  }%
}
\makeatother

\newcommand{\vnotimes}{\mathbin{\mkern1mu\overline{\mkern-2.5mu\otimes\mkern-2.5mu}\mkern1mu}}

\newcommand{\vertiii}[1]{{\left\vert\kern-0.25ex\left\vert\kern-0.25ex\left\vert #1 
		\right\vert\kern-0.25ex\right\vert\kern-0.25ex\right\vert}}

\newtheorem{theorem}{Theorem}[section]

\newtheorem{corollary}[theorem]{Corollary}

\newtheorem{lemma}[theorem]{Lemma}
\newtheorem{definition}[theorem]{Definition}

\newtheorem{proposition}[theorem]{Proposition}
\newtheorem{assumption}[theorem]{Assumption}
\theoremstyle{remark}
\newtheorem{remark}[theorem]{Remark}
\newtheorem{example}[theorem]{Example}

\numberwithin{equation}{section} 
\numberwithin{figure}{section} 
\numberwithin{table}{section} 

\colorlet{symbols}{blue!90!black}
\colorlet{testcolor}{green!60!black}

\usetikzlibrary{calc}

\tikzset{
	eps/.style={circle,fill=white,draw=symbols,inner sep=0pt,minimum size=0.8mm},
	}
\makeatletter
\def\DeclareSymbol#1#2#3{\expandafter\gdef\csname MH@symb@#1\endcsname{\tikz[baseline=#2,scale=0.15,draw=symbols]{#3}}}
\def\<#1>{\csname MH@symb@#1\endcsname}
\makeatother

\makeatletter

\DeclareRobustCommand{\TitleEquation}[2]{\texorpdfstring{\StrLeft{\f@series}{1}[\@firstchar]$\if%
		b\@firstchar\boldsymbol{#1}\else#1\fi$}{#2}}

\makeatother

\def\Poly{\mathcal{P}}
\def\Words{\mathcal{W}}
\def\indet{\mathcal{X}}

\newcommand{\horrule}[1]{\rule{\linewidth}{#1}} 

\title{	
\horrule{0.5pt} \\[0.4cm] 
\large Free Bakry--\'{E}mery Calculus
 \\ 
\horrule{0.5pt} \\[0.5cm] 
}
\author{Ajay Chandra\inst1\orcidlink{0000-0003-3690-1890}, Maria Gordina\inst2\orcidlink{0000-0002-9003-7362}, and Martin Peev\inst3\orcidlink{0000-0001-6191-8067}}
\institute{Purdue University, West Lafayette, USA \email{ajay.chandra@gmail.com} \and
University of Rochester, USA,  \email{mgordina@ur.rochester.edu} \and 
University of Oxford, UK, \email{martin.peev@maths.ox.ac.uk} }

\begin{document}

\maketitle 
\begin{abstract}
We prove a free probabilistic Bakry--\'{E}mery criterion for logarithmic Sobolev inequalities and hypercontractivity. Our approach relies on a noncommutative version of the carr\'{e} du champ operator and its iteration, which we are able to define in the free probability setting. This allows us to formulate a positive curvature condition, which is shown to be sufficient for logarithmic Sobolev inequalities and hypercontractivity in this setting. 
In the setting of free Gibbs measures, we also show that this curvature condition can be expressed in terms of the positivity of the Hessian of the associated potential. 
\end{abstract}

\keywords{free probability; logarithmic Sobolev inequality; hypercontractivity; noncommutative Bakry--\'{E}mery theory; carr\'{e} du champ operator; curvature-dimension condition; free Gibbs measures; Langevin dynamics}

\noindent{\small\textit{2020 Mathematics Subject Classification:} 46L54 (Primary); 39B62, 46L53, 47D07, 81S22 (Secondary).}

\tableofcontents

\section{Introduction}
Logarithmic Sobolev inequalities connect entropy production, convergence to equilibrium, and the regularising properties of Markov semigroups. Gross's logarithmic Sobolev inequality for Gaussian measure \cite{Gross1975c, Gross1975a, Gross2006onNelson}, together with Nelson's work on the Ornstein--Uhlenbeck semigroup \cite{Nelson1966a, Nelson1973c}, established the classical relation between logarithmic Sobolev inequalities and hypercontractivity. Bakry and \'{E}mery subsequently developed a geometric approach based on the carr\'{e} du champ $\Gamma$ and its iteration $\Gamma_2$; see, for example, \cite{Ledoux2000a, BakryGentilLedouxBook}. For a classical Langevin semigroup, a lower bound on the Hessian of the potential gives a lower curvature bound, which in turn yields gradient estimates and functional inequalities.

The purpose of this article is to develop an analogue of this Bakry--\'{E}mery calculus for free Langevin dynamics. We consider a free stochastic differential equation driven by an $m$-dimensional semicircular Brownian motion,
\begin{equ}
    \d Y_t=-DV(Y_t)\,\d t+\sqrt{2}\,\d X_t\;,
\end{equ}
where $V$ is a self-adjoint noncommutative polynomial. Our principal functional-inequality result is a density-form modified logarithmic Sobolev inequality for a stationary law $\omega$ of this dynamics. This formulation is adapted to entropy dissipation along a noncommutative Markov semigroup and should be distinguished from Gross's logarithmic Sobolev inequality for elements of the form $a^*a$.

Several forms of logarithmic Sobolev inequality and hypercontractivity were already known in noncommutative probability. Gross first studied hypercontractivity for fermionic fields \cite{Gross1975a}, and Carlen and Lieb established corresponding results for fermionic oscillator semigroups \cite{CarlenLieb1993a, CarlenLieb1993b}. Biane adapted these methods to prove logarithmic Sobolev inequalities and hypercontractivity for free Gaussian systems \cite{Biane1997c}. A general framework for hypercontractivity on noncommutative $L^p$-spaces was developed by Olkiewicz and Zegarlinski \cite{OlkiewiczZegarlinski1999}. Logarithmic Sobolev inequalities have also entered free transport theory through the work of Biane and Voiculescu \cite{BianeVoiculescu2001} and through random-matrix approximation arguments such as \cite{HiaiPetzUeda2004}. In finite-dimensional quantum probability, optimal-transport methods have yielded entropy-decay results for quantum Markov semigroups \cite{CarlenMaas2017, CarlenMaas2020, CarlenMaas2020Correction}. Noncommutative versions of Bakry--\'{E}mery theory have more recently been developed in \cite{BrannanGaoJunge2022, GaoRouze2022, GaoJungeLaRacuenteLi2025, GaoGordina2024}. We also mention the recent work \cite{SunXuZhang2026} on hypercontractivity for quantum Ornstein--Uhlenbeck semigroups through Meixner polynomials. Our aim differs from these Gaussian, finite-dimensional, or orthogonal-polynomial approaches: we formulate a curvature criterion directly for free Langevin dynamics associated with non-quadratic polynomial potentials.

The first difficulty is that the free difference quotient is tensor-valued. After passing to the opposite algebra in the first tensor factor, the gradient takes values in the enveloping algebra $\CM^e=\CM^{\op}\vnotimes\CM$. Consequently, there are two natural carr\'{e} du champ operators. The lifted quantity
\begin{equ}
    \tilde\Gamma(a)=(\nabla a)^*\cdot\nabla a
\end{equ}
is $\CM^e$-valued, while its slice
\begin{equ}
    \Gamma(a)=(\omega^{\op}\vnotimes\Id)\bigl(\tilde\Gamma(a)\bigr)
\end{equ}
is $\CM$-valued. These lead to distinct strong and weak curvature conditions and gradient estimates. This distinction is essential: when $K>0$, the weak estimate is sufficient for Poincar\'{e}-type consequences, whereas the proof of the modified logarithmic Sobolev inequality uses the completed order-valued strong estimate.

We define corresponding iterated carr\'{e} du champ operators $\Gamma_2$ and $\tilde\Gamma_2$ and compute them through commutators of the generator with the free gradient. The resulting commutator is governed by an enveloping-algebra-valued Hessian $\vec T_V$ of the potential. We introduce full and partial $K$-convexity according to whether the Hermitian part of this Hessian is bounded below before or after applying the slice map. Full $K$-convexity implies
\begin{equ}
    \tilde\Gamma_2(a)\geqslant K\tilde\Gamma(a)\;,
\end{equ}
whereas partial $K$-convexity implies the corresponding sliced inequality
\begin{equ}
    \Gamma_2(a)\geqslant K\Gamma(a)\;.
\end{equ}
At the algebraic level, the Hessian appearing here is the same tensor-valued Hessian that occurs in the notion of $h$-convexity from \cite{DabrowskiGuionnetShlyakhtenko2022}. Related commutator expressions appear in \cite{AIHPB_2014__50_4_1404_0}; we also learned during the preparation of this article that Diez independently uses such commutators to study free Poincar\'{e} inequalities and a free analogue of Obata rigidity \cite{diez2026obata}.

It is important to separate these algebraic curvature computations from the analytical construction of the semigroup. We work under hypotheses ensuring that the free stochastic differential equation gives a stationary, ultraweakly continuous Markov semigroup on a tracial von Neumann algebra. From Section~\ref{sec:free_ito} onwards, we impose the stated symmetry and polynomial-core assumptions needed to close the free gradient and justify the generator computations. These properties are standing analytical hypotheses: they are not consequences merely of the fact that $V$ is polynomial or of positivity of its algebraic Hessian. In particular, our functional-inequality argument does not use the stronger global convexity assumptions employed in constructions of free Langevin dynamics such as \cite{GuionnetShlyakhtenko2009, DabrowskiGuionnetShlyakhtenko2022}.

Under these standing hypotheses, full $K$-convexity with $K>0$ yields the strong gradient estimate
\begin{equ}
    \tilde\Gamma(P_ta)\leqslant e^{-2Kt}P_t^e\bigl(\tilde\Gamma(a)\bigr)\;.
\end{equ}
We transfer this estimate to the logarithmic-mean metric by combining complete positivity with the Choi--Davis--Jensen inequality. This gives exponential decay of the Fisher information. Integrating the entropy-dissipation identity then proves, for every density $\rho\in\CL^1(\omega)_+$,
\begin{equ}
    \Ent_\omega(\rho)\leqslant\frac{1}{2K}\CI_\omega(\rho)\;.
\end{equ}
Thus the modified logarithmic Sobolev constant in our convention is $2K$. Here a density satisfies $\omega(\rho)=1$, and the Fisher information is understood in the relaxed sense defined below, so the inequality extends from bounded uniformly positive densities to arbitrary densities. The argument does not use finite-dimensional matricial approximations and is not restricted to the free Gaussian potential.

The inequality proved here is a density-form modified logarithmic Sobolev inequality, whose dissipation is $\CE(\rho,\log(\rho))$. It is not, without an additional comparison of dissipation functionals, Gross's logarithmic Sobolev inequality for $a^*a$. Consequently, we do not claim that the result by itself implies hypercontractivity. Establishing such a comparison, or proving a Gross inequality directly under the curvature condition introduced here, is a separate problem.

The article is organised as follows. Section~\ref{sec:notation} fixes the tensor-product, opposite-algebra, and free-difference-quotient conventions. Section~\ref{sec:commutative} recalls the commutative $\Gamma$-calculus motivating the free construction. Section~\ref{sec:free_ito} introduces the free Langevin dynamics, computes its generator, and records the analytical assumptions used subsequently. Section~\ref{sec:nc_gamma} develops the closed free Dirichlet form, the two carr\'{e} du champ operators, and the Hessian commutator identities. The final section proves the gradient and Poincar\'{e} estimates, the logarithmic-mean estimate, and the density-form modified logarithmic Sobolev inequality.

\subsection*{Acknowledgements} 
MG gratefully acknowledges partial support by NSF Grant DMS-2246549.
MP gratefully acknowledges support by the UKRI Frontier Research grant \emph{Stochastic analysis of quantum fields} [grant number EP/Z534328/1] awarded to Massimiliano Gubinelli.
Part of this research was performed while AC and MG were visiting  the SLMath (\emph{Recent Trends in Stochastic Partial Differential Equations}) and Hausdorff Research Institute for Mathematics (\emph{Probabilistic methods in quantum field theory}) whose support is greatly appreciated. 

\section{Notation}\label{sec:notation}

\subsection{Tensor Products}\label{ss:tensor_products}

Given two real or complex vector spaces  $V$ and $V'$, we denote by $V \otimes V'$ the \emph{algebraic} tensor product.
We define $\iota \colon V \otimes V \rightarrow V \otimes V$ to be the involution that permutes factors, that is, $\iota(v \otimes w) = w \otimes v$.

Given an algebra $\CA$, we write $\mcb{m} \colon \CA \otimes \CA \to \CA$ for the corresponding multiplication map, although we will often suppress the multiplication map from the notation, writing $\mcb{m}(a,b) = \mcb{m}(a \otimes b) = ab$.

We also write $\amalg$ for the various natural concatenation products induced by $\mcb{m}$. That is, given simple tensors $ v \otimes a \in (V \otimes \CA)$ and $b \otimes w \in \mathcal{A} \otimes W$, we set 
\begin{equ}
    (v \otimes a) \amalg (b \otimes w) = 
    v \otimes ab \otimes w \in V \otimes \mathcal{A} \otimes W\;.
\end{equ}
In particular, when $V$ or $W$ is just a field of scalars, this is a standard $\mathcal{A}$-module action and we suppress $\amalg$ from the notation.

For any algebra $\mathcal{A}$, we denote by $\tw$ the natural  action of $\mathcal{A} \otimes \mathcal{A}$ on $\mathcal{A}$ given by $(a \otimes b) \tw c = acb$.
We overload this notation, and also use $\tw$ to denote the action of $\mathcal{A} \otimes \mathcal{A}$ on $\mathcal{A} \otimes \mathcal{A}$ given by setting $(a \otimes b) \tw (c \otimes d) = ac \otimes db$.

\subsection{Positivity} 

Within this article we will use the following equivalent characterisation of positivity for elements $a \in \CA$ of a $C^*$-algebra $\CA$: for $a \in \CA_{\sa}$
\begin{equ}[eq:positivity]
    a \geqslant 0  \quad \iff \quad \forall \tau \in \CS(\CA) : \tau(a) \geqslant 0 \; ,
\end{equ} 
where $\CS(\CA)$ is the set of states of $\CA$. 

For a finite trace $\omega$ on $\CA$ and $p \in [1,\infty]$, let $\CL^p(\omega)$ denote the usual noncommutative $L^p$-spaces. An element $a \in \CL^1(\omega)$ is non-negative if and only if, for all $b \in \CA_+$,
\begin{equ}
    \omega(ab) \geqslant 0 \; . 
\end{equ}
We denote the set of such non-negative elements by $\CL^1(\omega)_+$.

\subsection{Polynomial Observable Algebra}\label{ss:poly_algebra}
Most of our computations will occur on a $*$-algebra of polynomial observables, which we denote by $\Poly$.
In particular, we fix $m \geqslant 1$ and set $\Poly \eqdef \C \langle \indet^{1},\dots, \indet^{m} \rangle$ where $\indet^{1},\dots,\indet^{m}$ are noncommutative self-adjoint indeterminates.
A word $w = (i_{1},\dots,i_n)$ is a finite (possibly empty) ordered list of elements of $[m]\eqdef \{1,\dots,m\}$.
Given two words $w, w'$, we write $w w'$ for their concatenation.
We also write $|w|$ for the length of the word $w$. We write $\Words$ for the collection of all words, $\emptyset$ for the empty word, and, for $i \in [m]$, $i$ is the word that consists of only one instance of $i$.

Given a word $w$, we write $\indet^{w} = \prod_{i \in w} \indet^{i}$ where the product is ordered and allows for multiplicity.
Clearly, the collection of monomials $\left\{ \indet^{w} \,\mbig|\,
w \in \CW\right\}$ is a basis for $\Poly$.

The $\ast$-algebra structure on $\Poly$ is the one for which the indeterminates are self-adjoint: $\ast$ is the unique antilinear anti-automorphism of $\Poly$ fixing each $\indet^{i}$. Writing $\overleftarrow{w} = (i_{n},\dots,i_{1})$ for the reversal of a word $w = (i_{1},\dots,i_{n})$, $*$ acts on monomials by $(\indet^{w})^{\ast} \eqdef \indet^{\overleftarrow{w}}$.

For $i \in [m]$, we define the corresponding (noncommutative) partial derivative as a linear map $\partial_{i} \colon \Poly \to \Poly \otimes \Poly$ given by setting, for any $\indet^{w}$,
\begin{equ}
    \partial_{i} \indet^{w}
    \eqdef
    \sum_{w = w_1 i w_2} \indet^{w_1} \otimes \indet^{w_2} \;.
\end{equ}
We note that, for $a,b \in \Poly$, we have the Leibniz rule
\begin{equ}[eq:Leibniz]
    \partial_{i} (ab) =
    (\partial_{i}a)b + a(\partial_{i}b)\;,
\end{equ}
where on the right we use that $\Poly \otimes \Poly$ is a $\Poly$ bimodule. 
We also write $\partial_{i}\partial_{j} = (\partial_{i} \otimes \Id)\partial_{j}$.
We denote the corresponding cyclic partial derivative by $D_{i} \colon \Poly \to \Poly$, defined as $D_{i} \eqdef \mcb{m} \circ \iota \circ \partial_{i}$, that is, 
\begin{equ}
    D_{i} \indet^{w}
    =
    \sum_{w = w_1 i w_2} \indet^{w_2}\indet^{w_1} \;.
\end{equ}
We denote by $\partial \colon \Poly \rightarrow (\Poly \otimes \Poly)^m$ and $D \colon \Poly \rightarrow \Poly^{m}$ the corresponding ``gradient'' maps, that is $\partial P = (\partial_{i}P)_{i =1}^{m}$ and similarly for $D$.

We introduce a noncommutative Laplacian $\Delta \colon \Poly \rightarrow \Poly^{\otimes 3}$ by setting
\begin{equ}
    \Delta \eqdef
    \sum_{i=1}^{m} \big( \partial_{i} \otimes \Id + \Id \otimes \partial_{i}   \big)
    \partial_{i}
    =
    2 \sum_{i=1}^{m}
    (\partial_{i} \otimes \Id)\partial_{i}\;.
\end{equ}
We equip $\Poly^{\otimes 2}$ with the component-wise product~$\cdot$ and involution $(a \otimes b)^{\ast} = a^{\ast} \otimes b^{\ast}$.

We adopt the notational convention that $\partial_{i}a^{\ast}$ means one takes the adjoint before taking the partial free derivative. 
In particular, $\ast$-operation interacts with differentiation via
\begin{equ}\label{eq:partial_star}
   \partial_{i}a^{\ast} = \partial_{i}(a^{\ast}) = (\iota\,\partial_{i}a)^{\ast}\;;
\end{equ}
equivalently, $\partial_{i}(a^{\ast}) = \sum (a^{(2)})^{\ast} \otimes (a^{(1)})^{\ast}$ when $\partial_{i}a = \sum a^{(1)} \otimes a^{(2)}$.

\section{Commutative Gamma Calculus and Functional Inequalities}\label{sec:commutative}

In this section, we review the classical Bakry--\'{E}mery $\Gamma$-calculus and its applications to functional inequalities.
The material presented here is well-established and serves as a motivation for the noncommutative extensions developed in subsequent sections. Moreover, we restrict the exposition to Markov semigroups over Euclidean spaces, as we are mostly concerned with differential operators on such spaces. 

\subsection{Carr\'{e} du Champ Operator \TitleEquation{\Gamma}{Gamma} and Iterated Carr\'{e} du Champ Operator \TitleEquation{\Gamma_2}{Gamma2}}
\label{ss:cdc}

We consider a differential operator of the form
\begin{equ}
    L = \sum_{i=1}^{n} X_{i}^{2} + X_{0}\;,
\end{equ}
where $X_{0}, X_{1},\dots, X_{n}$ are smooth vector fields on $\R^{n}$.
Recall that for any vector field $X$ and smooth functions $f, g$ on $\R^{n}$, we have the product rules
\begin{equs}
    X(fg) &= (Xf) \cdot g + f \cdot (Xg)\;,\\
    X^{2}(fg) &= (X^{2}f) \cdot g + f \cdot (X^{2}g) + 2(Xf) \cdot (Xg)\;.
\end{equs}
It follows that
\begin{equ}[eq:comm_diffusion_product]
L(fg) = (Lf) \cdot g + f \cdot (Lg) + 2\sum_{i=1}^{n} (X_{i}f)(X_{i}g)\;.
\end{equ}

\begin{definition}[Carr\'{e} du Champ]\label{def:comm_gamma}
    The \emph{carr\'{e} du champ} operator $\Gamma$ is the symmetric bilinear form on smooth functions defined by
    \begin{equ}
    \Gamma(f,g) \eqdef \frac{1}{2}\Big( L(fg) - f \cdot Lg - (Lf) \cdot g \Big) = \sum_{i=1}^{n} (X_{i}f)(X_{i}g)\;.
    \end{equ}
    As is standard, we write $\Gamma(f) \eqdef \Gamma(f,f)$.
    \end{definition}
    
    With this notation, \eqref{eq:comm_diffusion_product} takes the form
    \begin{equ}[eq:comm_product_compact]
        L(fg) = (Lf) \cdot g + f \cdot (Lg) + 2\,\Gamma(f,g)\;.
    \end{equ}
    
    \begin{definition}[Iterated carr\'{e} du champ]\label{def:comm_gamma2}
    The \emph{iterated carr\'{e} du champ} is the symmetric bilinear form
    \begin{equ}
        \Gamma_{2}(f,g) \eqdef \frac{1}{2}\Big( L\,\Gamma(f,g) - \Gamma(f, Lg) - \Gamma(Lf, g) \Big)\;.
    \end{equ}
\end{definition}

By a direct computation using \eqref{eq:comm_product_compact}, one obtains the following
\begin{equ}[eq:comm_gamma2_explicit]
    \Gamma_{2}(f,g) = \frac{1}{2}\sum_{i=1}^{n}\Big( [L,X_{i}]f\cdot X_{i}g+ X_{i}f \cdot [L,X_{i}]g \Big) + \sum_{i,j=1}^{n} (X_{i}X_{j}f)(X_{i}X_{j}g)\;.
\end{equ}

We now record the standard notation used throughout. 
\begin{equs}
\nabla f &\eqdef (X_{1}f, \dots, X_{n}f)\;,\\
\Gamma(f) &= \Gamma(f,f) = \sum_{i=1}^{n} (X_{i}f)^{2} = |\nabla f|^{2}\;,\\
\operatorname{Hess} f &\eqdef \{X_{i}X_{j}f\}_{i,j=1}^{n}\;,\\
\|\operatorname{Hess} f\|^{2} &\eqdef \sum_{i,j=1}^{n} (X_{i}X_{j}f)^{2}\;,\\
\Gamma_{2}(f) &\eqdef \Gamma_{2}(f,f) = \sum_{i=1}^{n} [L,X_{i}]f \cdot X_{i}f + \|\operatorname{Hess} f\|^{2}\;.
\end{equs}
Note that, so far, we have not introduced the function spaces we will work with. Let $\left( \R^{n}, \mfB\left( \R^{n} \right) \right)$ be the Borel $\sigma$-algebra, and let $\mu$ be a (Borel) probability measure on $\left( \mathbb{R}^{n}, \mfB\left( \R^{n} \right) \right)$; typically $\mu$ is an invariant measure for the corresponding semigroup, as discussed later in Section~\ref{ss:invariant_measures}. Throughout this section we work under the following standing assumption.

\begin{assumption}[Standard Algebra]\label{ass:standard_algebra}
    There is a subalgebra $\CA \subset L^{2}\left( \d\mu \right)$, containing the constants and dense in $L^{2}\left( \d\mu \right)$, which is a core for $L$ and is stable under $L$ and under the semigroup $(P_{t})_{t \geqslant 0}$. All $\Gamma$-calculus computations below are performed on $\CA$; the resulting functional inequalities are then extended to their natural domains by density.
\end{assumption}

For the construction of such an algebra, and for the density arguments underlying the extension, we refer to \cite[Sections~1.13 and 3.1--3.3]{BakryGentilLedouxBook}. We note that stability under the semigroup is the delicate requirement: $C_{c}^{\infty}\left( \R^{n} \right)$ is dense and stable under $L$, but not under $P_{t}$; this is resolved in \textit{loc.\ cit.\ }via essential self-adjointness. These requirements are satisfied for the Langevin dynamics considered below. We denote by $\mathcal{D}_{L}$ the domain of $L$.

In the noncommutative setting, the polynomial algebra $\Poly$ plays the role of $\CA$ as a core stable under the generator, while statements involving the semigroup pass to the closure of the gradient; see Section~\ref{sec:nc_gamma}.

\begin{definition}[Curvature-Dimension Condition]\label{def:CDcondition}
    We say that the \emph{curvature-dimension inequality} $\operatorname{CD}(\kappa, n)$ holds for some $\kappa \in \mathbb{R}$ and $n \in [1,\infty]$ if for all smooth $f$
    \begin{equ}[eq:CD]
        \Gamma_{2}(f) \geqslant \kappa\, \Gamma(f) + \frac{1}{n}(Lf)^{2}\;.
    \end{equ}
\end{definition}

\begin{example}[Langevin Dynamics]\label{ex:langevin}
    Consider $L = \Delta - \nabla V \cdot \nabla$ on $\mathbb{R}^{n}$.
    Then $X_{j} = \partial_{j}$ for $j = 1,\dots, n$ and $X_{0} = -\sum_{j=1}^{n} (\partial_{j}V)\partial_{j}$.
    A straightforward computation gives
    \begin{equ}
        [L,X_{i}] = [X_{0},X_{i}] = \sum_{j=1}^{n} (\partial_{i} \partial_{j} V) \cdot \partial_{j}\;, \quad i = 1,\dots,n\;,
    \end{equ}
    and therefore for a smooth function on $\R^{n}$
    \begin{equs}
        \Gamma(f) &= |\nabla f|^{2}\;,\\
        \Gamma_{2}(f) &= \|\operatorname{Hess} f\|^{2} + \operatorname{Hess}(V)(\nabla f, \nabla f)\;.
    \end{equs}
    Note that $\operatorname{Hess}(V) \geqslant 0$ whenever $V$ is convex (equivalently, $e^{-V}$ is log-concave), and in this case $\Gamma_{2}(f) \geqslant 0$.
    More precisely, Bakry--\'{E}mery's criterion states that
    \begin{equ}
        \Gamma_{2} \geqslant \kappa\,\Gamma \quad \text{if and only if} \quad \operatorname{Hess}(V) \geqslant \kappa I_{n}\;,
    \end{equ}
    where the latter inequality is in the sense of matrices.
\end{example}

\subsection{The \TitleEquation{\operatorname{CD}(0,\infty)}{CD(0,inf)} Condition and Functional Inequalities}
\label{ss:CD0}
We now assume that $\left( L, \mathcal{D}_{L}\right)$ is the infinitesimal generator for a Markov semigroup $\left\{P_{t}\right\}_{t \geqslant 0}$ on $L^{2}\left( d\mu \right)$. 

\begin{proposition}[Convexity]\label{prop:CD0_convexity}
If $\operatorname{CD}(0,\infty)$ holds, that is, $\Gamma_{2}(f) \geqslant 0$ for all  $f \in \mathcal{A}$, then the functional $\Phi$ defined by
\begin{equ}[eq:PhiGamma]
    \Phi(s) \eqdef \Phi_{\Gamma}(s) \eqdef P_{s}\big((P_{t-s}f)^{2}\big)\;, \quad 0 \leqslant s \leqslant t, \; f \in \CA\;,
\end{equ}
 is convex on $[0,t]$.
\end{proposition}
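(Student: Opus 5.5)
We compute the first two derivatives of $\Phi$ in $s$ and show that the second derivative is a positive operator applied to $\Gamma_2$. Fix $t>0$ and $f\in\CA$, and write $g_s \eqdef P_{t-s}f$, so that $\partial_s g_s = -L g_s$. By Assumption~\ref{ass:standard_algebra}, $g_s\in\CA$ for every $s$. Hence $g_s^2\in\CA$ (since $\CA$ is an algebra) and $\Gamma(g_s)\in\CA$ (by \eqref{eq:comm_product_compact}). All expressions below therefore lie in the domain of $L$, and we may use $\frac{\d}{\d s}P_s h = P_s L h$.

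The key steps, in order, are as follows.

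\textbf{First derivative.} By the product rule and \eqref{eq:comm_product_compact},
\begin{equ}
\Phi'(s) = P_s\big(L(g_s^2) - 2 g_s L g_s\big) = 2P_s\big(\Gamma(g_s)\big)\;.
\end{equ}

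\textbf{Second derivative.} Differentiating once more, and using bilinearity and symmetry of $\Gamma$,
\begin{equ}
\Phi''(s) = 2P_s\big(L\Gamma(g_s) - 2\Gamma(g_s, Lg_s)\big) = 4P_s\big(\Gamma_2(g_s)\big)\;,
\end{equ}
where the last equality is Definition~\ref{def:comm_gamma2}.

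\textbf{Conclusion.} Under $\operatorname{CD}(0,\infty)$ we have $\Gamma_2(g_s)\geqslant 0$ pointwise. Since $P_s$ is Markov, it preserves positivity, so $\Phi''(s)\geqslant 0$ on $[0,t]$. This gives convexity of $\Phi$, either directly or by integrating twice.

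The main obstacle is not the algebra but justifying the differentiation in $s$. One has to check that $s\mapsto P_s(h_s)$ is differentiable with derivative $P_s(Lh_s)+P_s(\partial_s h_s)$ when $h_s$ depends on $s$. This requires continuity of $s\mapsto h_s$ in the graph norm of $L$, which we would take from the stability of $\CA$ under $P_t$ in Assumption~\ref{ass:standard_algebra}, following the density arguments of \cite[Section~3.1]{BakryGentilLedouxBook}.

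If one wishes to avoid the second derivative entirely, there is an alternative route: show that $\Phi'$ is nondecreasing. One checks that $\frac{\d}{\d u}P_u\big(\Gamma(P_{r-u} f)\big) = 2P_u\big(\Gamma_2(P_{r-u}f)\big)\geqslant 0$ for $0\leqslant u\leqslant r$, which is the same computation as above.
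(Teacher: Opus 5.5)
Your proposal is correct and follows the paper's argument: you compute $\Phi'(s)=2P_s\Gamma(P_{t-s}f)$ from the product rule and $\Phi''(s)=4P_s\Gamma_2(P_{t-s}f)\geqslant 0$ from positivity of $P_s$. The only difference is cosmetic. You obtain $\Phi''$ from bilinearity of $\Gamma$ and the abstract definition of $\Gamma_2$. The paper instead writes $\Gamma(F)=\sum_i (X_iF)^2$ and applies the product rule to $L\big((X_iF)^2\big)$.
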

\begin{proof}
Let $F = F(s) \eqdef P_{t-s}f$, we have $F'(s) = -LF$, and therefore by \eqref{eq:comm_product_compact},
\begin{equ}[eq:Phi_first_deriv]
    \Phi'(s) = P_{s}\big(L(F^{2}) + 2F \cdot F'(s)\big) = 2P_{s}\big(\Gamma(F)\big)\;.
\end{equ}
Since $(\Gamma(F(s)))' = -2\sum_{i=1}^{n}(X_{i}F)(X_{i}LF)$, applying \eqref{eq:comm_product_compact} to $L((X_{i}F)^{2})$ yields
\begin{equ}[eq:Phi_second_deriv]
    \Phi''(s) = 4P_{s}\,\Gamma_{2}(P_{t-s}f) \geqslant 0
\end{equ}
by the assumption that $\Gamma_{2} \geqslant 0$.
\end{proof}

\begin{proposition}[Gradient estimate]
\label{prop:comm_gradient_estimate}
The following are equivalent
\begin{enumerate}
\item $\Gamma_{2} \geqslant \kappa\,\Gamma$\;;
\item For every  $f \in \mathcal{A}$ and $t \geqslant 0$,
\begin{equ}[eq:comm_gradient_estimate]
\Gamma(P_{t}f) \leqslant e^{-2\kappa t}\, P_{t}\Gamma(f)\;.
\end{equ}
\end{enumerate}
\end{proposition}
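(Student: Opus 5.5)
The plan is to run the standard interpolation argument that refines Proposition~\ref{prop:CD0_convexity}. Fix $f \in \CA$ and $t > 0$, and set
\begin{equ}
\Psi(s) \eqdef e^{-2\kappa s} P_{s}\big(\Gamma(P_{t-s}f)\big)\;, \qquad 0 \leqslant s \leqslant t\;.
\end{equ}
Then $\Psi(0) = \Gamma(P_t f)$ and $\Psi(t) = e^{-2\kappa t} P_t \Gamma(f)$. Assumption~\ref{ass:standard_algebra} guarantees that $F(s) \eqdef P_{t-s}f$ stays in $\CA$, so $\Gamma(F)$, $L\Gamma(F)$ and $\Gamma_2(F)$ all make sense, and differentiation in $s$ can be moved through $P_s$.

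For the implication (1)$\Rightarrow$(2), differentiate $\Psi$ exactly as in the proof of Proposition~\ref{prop:CD0_convexity}. Using $\frac{d}{ds}P_s g = P_s L g$ and $F' = -LF$, the bilinearity of $\Gamma$ gives
\begin{equ}
\tfrac{d}{ds}\Gamma(F) = -2\Gamma(F, LF)\;.
\end{equ}
Hence
\begin{equ}
\Psi'(s) = e^{-2\kappa s}P_s\big(L\Gamma(F) - 2\Gamma(F,LF) - 2\kappa\Gamma(F)\big) = 2e^{-2\kappa s}P_s\big(\Gamma_2(F) - \kappa\Gamma(F)\big)\;,
\end{equ}
by Definition~\ref{def:comm_gamma2}. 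Since $P_s$ is Markov and therefore positivity preserving, (1) gives $\Psi' \geqslant 0$. Consequently $\Psi(0) \leqslant \Psi(t)$, which is exactly \eqref{eq:comm_gradient_estimate}.

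For (2)$\Rightarrow$(1), differentiate \eqref{eq:comm_gradient_estimate} at $t=0$. Define
\begin{equ}
G(t) \eqdef e^{-2\kappa t}P_t\Gamma(f) - \Gamma(P_t f)\;.
\end{equ}
Then $G \geqslant 0$ and $G(0) = 0$, so $G'(0) \geqslant 0$. Computing,
\begin{equ}
G'(0) = -2\kappa\Gamma(f) + L\Gamma(f) - 2\Gamma(f, Lf) = 2\big(\Gamma_2(f) - \kappa\Gamma(f)\big)\;,
\end{equ}
and this yields (1) pointwise.

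The main obstacle is justifying these derivatives. The formal manipulations are straightforward; what needs care is that $s \mapsto P_s\Gamma(P_{t-s}f)$ is differentiable with the stated derivative, and that the derivative at $0$ exists pointwise, or at least in $L^2(\mu)$ tested against nonnegative functions. I would rely on Assumption~\ref{ass:standard_algebra}, namely that $\CA$ is an algebra stable under $L$ and $P_t$, so that $\Gamma(F) \in \CA \subset \mathcal{D}_L$. I would then carry out the argument in weak form: test against a nonnegative $g \in \CA$ and use the symmetry $\int g\, P_s h\, d\mu = \int (P_s g)\, h\, d\mu$. This gives $\int g\,\Gamma_2(f)\,d\mu \geqslant \kappa\int g\,\Gamma(f)\,d\mu$ for all such $g$, and density of $\CA$ upgrades this to the almost-everywhere inequality.
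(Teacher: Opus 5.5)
Your proposal is correct and is essentially the paper's proof. For (1)$\Rightarrow$(2) you use the same interpolation $\Psi(s)=e^{-2\kappa s}P_s\Gamma(P_{t-s}f)$ with $\Psi'(s)=2e^{-2\kappa s}P_s\bigl(\Gamma_2(P_{t-s}f)-\kappa\Gamma(P_{t-s}f)\bigr)\geqslant 0$, and for the converse you differentiate $e^{-2\kappa t}P_t\Gamma(f)-\Gamma(P_tf)\geqslant 0$ at $t=0$. Your extra care in justifying the derivatives goes beyond what the paper writes, but note that the symmetry of $P_s$ you invoke in the weak-form step is an additional hypothesis not contained in Assumption~\ref{ass:standard_algebra} for a general $L=\sum_i X_i^2+X_0$ (it does hold in the Langevin case).
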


\begin{proof}
For $0 \leqslant s \leqslant t$, define
\begin{equ}[eq:Psi_comm]
\Psi(s) \eqdef e^{-2\kappa s}\, P_{s}\big(\Gamma(P_{t-s}f)\big)\;.
\end{equ}
Then
\begin{equ}
\Psi'(s) = e^{-2\kappa s}\, P_{s}\Big( -2\kappa\,\Gamma(P_{t-s}f) + 2\,\Gamma_{2}(P_{t-s}f) \Big) \geqslant 0
\end{equ}
provided $\operatorname{CD}(\kappa,\infty)$ holds applied to $P_{t-s}f$.
Thus $\Psi(0) \leqslant \Psi(t)$, which gives \eqref{eq:comm_gradient_estimate}.

Conversely, taking $t \to 0$ in the inequality $e^{-2\kappa t}P_{t}\Gamma(f) - \Gamma(P_{t}f) \geqslant 0$ and dividing by $2t$ yields
\begin{equ}
\Gamma_{2}(f) - \kappa\,\Gamma(f) \geqslant 0\;.
\end{equ}
\end{proof}

The first consequence of the curvature-dimension inequality is the local Poincar\'{e} inequality, for a more general result we refer to \cite[Theorem 4.7.2]{BakryGentilLedouxBook}.  
\begin{corollary}[Local Poincar\'{e} inequality from $\operatorname{CD}(\kappa,\infty)$]\label{cor:PI_from_CD}
If $\Gamma_{2} \geqslant \kappa\,\Gamma$, then
\begin{equ}
\operatorname{Var}_{P_{t}}(f) \leqslant \frac{1 - e^{-2\kappa t}}{\kappa}\, P_{t}\Gamma(f)\;,
\end{equ}
where $\frac{1 - e^{-2\kappa t}}{\kappa}$ is understood to be $2t$ for $\kappa=0$. 
\end{corollary}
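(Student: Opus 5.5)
The plan is to run the semigroup interpolation of Proposition~\ref{prop:CD0_convexity}, but on $[0,t]$ and with the gradient estimate of Proposition~\ref{prop:comm_gradient_estimate} as input. Here $\operatorname{Var}_{P_t}(f) \eqdef P_t(f^2) - (P_tf)^2$. Fix $f \in \CA$ and $t>0$, and set
\[
\Phi(s) = P_s\big((P_{t-s}f)^2\big), \qquad 0 \leqslant s \leqslant t .
\]
Then $\Phi(t) = P_t(f^2)$ and $\Phi(0) = (P_tf)^2$, so $\operatorname{Var}_{P_t}(f) = \Phi(t) - \Phi(0)$. By \eqref{eq:Phi_first_deriv}, which used only \eqref{eq:comm_product_compact} and not any curvature assumption,
\[
\Phi'(s) = 2P_s\big(\Gamma(P_{t-s}f)\big).
\]
Hence
\[
\operatorname{Var}_{P_t}(f) = 2\int_0^t P_s\big(\Gamma(P_{t-s}f)\big)\,\d s .
\]

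Next we bound the integrand. Since $\Gamma_2 \geqslant \kappa\Gamma$, Proposition~\ref{prop:comm_gradient_estimate} applied at time $t-s$ gives
\[
\Gamma(P_{t-s}f) \leqslant e^{-2\kappa(t-s)}P_{t-s}\Gamma(f).
\]
We apply $P_s$, which is positivity preserving because it is Markov, and use the semigroup property $P_sP_{t-s} = P_t$. This yields
\[
P_s\Gamma(P_{t-s}f) \leqslant e^{-2\kappa(t-s)}P_t\Gamma(f).
\]
Integrating in $s$ gives
\[
\operatorname{Var}_{P_t}(f) \leqslant 2\Big(\int_0^t e^{-2\kappa(t-s)}\,\d s\Big) P_t\Gamma(f) = \frac{1-e^{-2\kappa t}}{\kappa}\,P_t\Gamma(f).
\]
For $\kappa = 0$ the integral is simply $t$, which gives the constant $2t$, consistent with the stated convention.

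The main point needing care is justifying the differentiation of $\Phi$ and the fundamental theorem of calculus. This requires $P_{t-s}f$ to stay in $\CA$ so that $L$ and $\Gamma$ apply to it. Assumption~\ref{ass:standard_algebra} gives exactly this stability, together with the core property needed to differentiate $P_s$ and $P_{t-s}$ strongly in $s$. This is the same level of rigour used in Propositions~\ref{prop:CD0_convexity} and~\ref{prop:comm_gradient_estimate}. Everything else reduces to positivity of $P_s$ and an elementary integral.
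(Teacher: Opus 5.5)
Your proposal is correct and follows the paper's proof. Both set $\Phi(s)=P_s\big((P_{t-s}f)^2\big)$, use $\Phi'(s)=2P_s\Gamma(P_{t-s}f)$, bound this by $2e^{-2\kappa(t-s)}P_t\Gamma(f)$ via Proposition~\ref{prop:comm_gradient_estimate}, positivity of $P_s$ and the semigroup property, and then integrate over $[0,t]$.
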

\begin{proof}
Let $\Phi(s) \eqdef P_{s}((P_{t-s}f)^{2})$.
Then $\Phi^{\prime}(s) = 2P_{s}(\Gamma(P_{t-s}f)) \leqslant 2e^{-2\kappa(t-s)}\,P_{t}\Gamma(f)$ for all $s \in [0,t]$ by Proposition~\ref{prop:comm_gradient_estimate}. Therefore
\begin{equs}
& P_{t}\left( f^{2}\right)-\left( P_{t} f \right)^{2}=\Phi\left( t\right)-\Phi\left( 0\right)=\int_{0}^{t} \Phi^{\prime}\left( s\right)ds
\\
&\leqslant 2\int_{0}^{t} e^{-2\kappa(t-s)}P_{t}\Gamma(f)\, ds=2P_{t}\Gamma(f)\int_{0}^{t} e^{-2\kappa(t-s)}\, ds=\frac{1-e^{-2\kappa t}}{\kappa}P_{t}\Gamma(f)
\end{equs}
\end{proof}

\subsection{Invariant Measures, Ergodicity and Functional Inequalities}
\label{ss:invariant_measures}

Consider a Markov semigroup $P_{t}$ on $L^{2}\left( d\mu \right)=L^{2}\left( \mathbb{R}^{n}, d\mu \right)$. We will now review assumptions on the measure $\mu$ that allow us to prove functional inequalities for the infinitesimal generator $L$. We say that $\mu$ is an \emph{invariant measure} for the semigroup $P_{t}$ if for any $f \in L^{1}(\d \mu)$
\begin{equ}
    \int_{\mathbb{R}^{n}} P_{t}f\,\d \mu = \int_{\mathbb{R}^{n}} f\,\d \mu\;,
\end{equ}
and $\mu$ is \emph{time-reversible} for $P_{t}$ (equivalently, $P_{t}$ is \emph{symmetric} with respect to $\mu$) if for any $f, g \in L^{2}(\d \mu)$
\begin{equ}
\int_{\mathbb{R}^{n}} f\, P_{t}g\,\d \mu = \int_{\mathbb{R}^{n}} (P_{t}f)\, g\,\d \mu\;.
\end{equ}
Since $\mu$ is a finite measure, time-reversible measures are invariant whenever $P_{t}$ is conservative (take $g \equiv 1$).
Infinitesimally, these conditions are characterised by
\begin{equ}
\int_{\mathbb{R}^{n}} f\, Lg\,\d \mu = \int_{\mathbb{R}^{n}} (Lf)\, g\,\d \mu
\text{ and } 
\int_{\mathbb{R}^{n}} Lf\,\d \mu = 0
\end{equ}
for all $f, g$ in the domain of $L$.

The semigroup $P_{t}$ is \emph{ergodic} with respect to $\mu$ if for any  $f \in L^{2}\left( d\mu \right)$
\begin{equ}
    P_{t}f \xrightarrow[t \to \infty]{} \int_{\mathbb{R}^{n}} f\,\d \mu\;.
\end{equ}
By  \cite[Section~3.1.9]{BakryGentilLedouxBook} we see that for a finite measure $\mu$ such that $\Gamma\left( f \right)=0$ only for constant functions, the corresponding (conservative) semigroup is ergodic with respect to $\mu$. This is the case for the Langevin semigroup.

We define the \emph{Dirichlet form} associated to $L$ and $\mu$ by the integration by parts formula
\begin{equ}[eq:IbyP]
    \CE(f,g) \eqdef \int_{\mathbb{R}^{n}}\Gamma(f,g)\,\d \mu = -\int_{\mathbb{R}^{n}} f\, Lg\,\d \mu\;.
\end{equ}
By Assumption~\ref{ass:standard_algebra}, the form $\CE$ is defined on $\CA$ and closable; we denote the domain of its closure by $\mathcal{D}_{\CE}$. Functional inequalities established for $f \in \CA$, such as the Poincar\'{e} inequality below, extend to $\mathcal{D}_{\CE}$ by density, as both sides are continuous with respect to the form norm.

Before proceeding to the goal of our exposition, logarithmic Sobolev inequalities, we start with the Poincar\'{e} inequality for $\Gamma$ and $\mu$. 
\begin{definition}[Poincar\'{e} inequality]
We say that the carr\'{e} du champ operator $\Gamma$ on $L^{2}\left( d\mu \right)$ satisfies a \emph{Poincar\'{e} inequality} with constant $C > 0$ if for all $f \in \mathcal{D}_{\CE}$, the domain of the Dirichlet form,
\begin{equ}
\operatorname{Var}_{\mu}(f) \eqdef \int_{\mathbb{R}^{n}} f^{2}\,\d \mu - \Big(\int_{\mathbb{R}^{n}} f\,\d \mu\Big)^{2} \leqslant C_{\operatorname{PI}} \,\CE(f) \eqdef C_{\operatorname{PI}} \int_{\mathbb{R}^{n}}\Gamma(f)\,\d \mu\;.
\end{equ}
\end{definition}

\begin{proposition}\label{prop:CD_implies_PI}
Suppose $P_{t}$ is a semigroup on $L^{2}\left( d\mu \right)$, where $\mu$ is an ergodic measure for $P_{t}$. If the curvature-dimension inequality~$\operatorname{CD}(\kappa,\infty)$ holds with $\kappa > 0$, then the corresponding carr\'{e} du champ operator $\Gamma$ satisfies the Poincar\'{e} inequality with constant $1/\kappa$.
\end{proposition}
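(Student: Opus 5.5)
The plan is to integrate the local Poincar\'{e} inequality of Corollary~\ref{cor:PI_from_CD} against the invariant measure and then let $t \to \infty$ using ergodicity. I will work first with $f \in \CA$, where all $\Gamma$-calculus manipulations are justified by Assumption~\ref{ass:standard_algebra}, and extend to $\mathcal{D}_{\CE}$ by density at the end.

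\emph{Step 1 (integrate the local inequality).} For $f \in \CA$ and $t>0$, Corollary~\ref{cor:PI_from_CD} gives
\begin{equ}
P_t(f^2) - (P_tf)^2 \leqslant \frac{1-e^{-2\kappa t}}{\kappa}\, P_t\Gamma(f)\;.
\end{equ}
Integrating against $\mu$ and using invariance, $\int P_t(f^2)\,\d\mu = \int f^2\,\d\mu$ and $\int P_t\Gamma(f)\,\d\mu = \int \Gamma(f)\,\d\mu = \CE(f)$. This yields
\begin{equ}
\int f^2\,\d\mu - \int (P_tf)^2\,\d\mu \leqslant \frac{1-e^{-2\kappa t}}{\kappa}\,\CE(f) \leqslant \frac{1}{\kappa}\CE(f)\;,
\end{equ}
where the last step uses $\kappa>0$.

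\emph{Step 2 (let $t\to\infty$).} Ergodicity gives $P_tf \to \int f\,\d\mu$ in $L^2(\d\mu)$, so $\int (P_tf)^2\,\d\mu \to (\int f\,\d\mu)^2$. Hence $\operatorname{Var}_\mu(f) \leqslant \kappa^{-1}\CE(f)$ for all $f\in\CA$. If ergodicity is only available in a weaker (e.g.\ pointwise) sense, I would instead use contractivity, $\|P_tf\|_\infty \leqslant \|f\|_\infty$ for bounded $f$, together with dominated convergence. Alternatively, one can avoid ergodicity subtleties by a spectral argument: the gradient estimate of Proposition~\ref{prop:comm_gradient_estimate} integrated against $\mu$ gives $\CE(P_tf) \leqslant e^{-2\kappa t}\CE(f)$, so $\|P_tf - \int f\,\d\mu\|_2 \to 0$ follows from $\frac{d}{dt}\|P_tf\|_2^2 = -2\CE(P_tf)$ combined with ergodicity to identify the limit.

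\emph{Step 3 (extension).} Both $\operatorname{Var}_\mu$ and $\CE$ are continuous for the form norm $\|f\|_2^2 + \CE(f)$, and $\CA$ is a form core. The inequality therefore extends to all of $\mathcal{D}_{\CE}$, and $\Gamma$ satisfies the Poincar\'{e} inequality with $C_{\operatorname{PI}} = 1/\kappa$.

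The only real obstacle is justifying the limit $\int (P_tf)^2\,\d\mu \to (\int f\,\d\mu)^2$, which is where the meaning of ergodicity enters. With $L^2$ convergence as stated in Section~\ref{ss:invariant_measures}, this step is immediate.
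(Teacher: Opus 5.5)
Your proof is correct and follows the paper's route: apply the local Poincar\'{e} inequality of Corollary~\ref{cor:PI_from_CD} and let $t\to\infty$ using ergodicity. The only cosmetic difference is that you first integrate against $\mu$ (using invariance), so ergodicity is needed only for the convergence of $\|P_tf\|_{L^2(\mu)}$, whereas the paper's one-line proof passes to the limit directly in the pointwise inequality.
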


\begin{proof} By Corollary~\ref{cor:PI_from_CD} with $\kappa>0$ 
and ergodicity,  and by taking $t \to \infty$ we get the result.
\end{proof}

\begin{remark}[Spectral gap and integrated $\Gamma_{2}$ criterion]\label{cor:spectral_gap}
One of the challenges in adapting $\Gamma$ calculus to noncommutative settings is that it is not easy to compare non-scalar objects. This becomes more straightforward if one replaces integrated curvature-dimension inequalities by those using a trace. In this paper we do not use this approach, but one might consider an analogue of the result below.

Let $\kappa > 0$ and suppose $\mu$ is reversible for $P_t$. Then the following are equivalent.
\begin{equ}
    \int_{E}\Gamma_{2}(f)\,\d \mu \geqslant \kappa\int_{E}\Gamma(f)\,\d \mu
\quad \Longleftrightarrow \quad
C_{\operatorname{PI}} \leqslant \frac{1}{\kappa}
\quad \Longleftrightarrow \quad
\text{spectral gap of } (-L) \geqslant \kappa\;.
\end{equ}
We refer to \cite[Section~4.2.1]{BakryGentilLedouxBook} for more details on the relation between Poincar\'{e} inequalities and spectral gap inequalities. 
\end{remark}

\subsection{Logarithmic Sobolev Inequalities}
\label{ss:LSI}

We now consider logarithmic Sobolev inequalities with respect to an invariant measure.
Define, for a convex function $\Phi$,
\begin{equ}
\operatorname{Ent}_{\mu}^{\Phi}(f) \eqdef \int_{E}\Phi(f)\,\d \mu - \Phi\Big(\int_{E} f\,\d \mu\Big) = \mathbb{E}_{\mu}\Phi(f) - \Phi(\mathbb{E}_{\mu}f)\;.
\end{equ}
The two principal choices are $\Phi(u) = u^{2}$ (which yields the variance) and $\Phi(u) = u\log u$ for $u > 0$ (which yields the entropy).

Note that by the diffusion property, $\Gamma(f^{2}) = 4f^{2}\Gamma(f)$, so
\begin{equ}
\int_{E}\frac{\Gamma(f^{2})}{f^{2}}\,\d \mu = 4\int_{E}\Gamma(f)\,\d \mu\;.
\end{equ}

For the remainder of this subsection we specialize to $\Phi(u) = u\log u$, the case relevant for the logarithmic Sobolev inequality.
We also specialize to Langevin dynamics for Gibbs measures.
Suppose $e^{-V}\d x$ is a finite measure on $\mathbb{R}^{n}$, and write
\begin{equ}
\d \mu_{V} \eqdef \frac{e^{-V}\d x}{\int e^{-V}\d x}\;.
\end{equ}
Then $L = \Delta - \nabla V \cdot \nabla$ is symmetric on $L^{2}(\mathbb{R}^{n}, \d \mu_{V})$, and $\mu_{V}$ is invariant for $L$.

We now prove the well-known fact that strict convexity of $V$ implies a Logarithmic Sobolev inequality for $\mu_V$, proceeding using Gamma calculus with respect to the Langevin type operator $L$. 
Again, we present this argument of a classical fact since we will generalize this argument to the free probability setting later in this article. 

The key step is the exponential decay of the Fisher information along the semigroup, which we deduce from the gradient estimate through the following variational characterisation of the Fisher information.

\begin{lemma}[Variational Characterisation of the Fisher Information]\label{lem:fisher_variational}
    Let $f$ be smooth and bounded with $\inf f > 0$. Then
    \begin{equ}[eq:fisher_variational]
        \CI(f) \eqdef \mathbb{E}_{\mu_{V}}\Big(\frac{\Gamma(f)}{f}\Big) = \sup_{a}\Big\{2\,\CE(f,a) - \mathbb{E}_{\mu_{V}}\big(f\,\Gamma(a)\big)\Big\}\;,
    \end{equ}
    where the supremum runs over smooth bounded $a$ and is attained at $a = \log f$.
\end{lemma}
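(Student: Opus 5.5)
The plan is a pointwise completion of the square, followed by integration. Since $\Gamma$ is a nonnegative symmetric bilinear form, for each $x$ the map $a \mapsto \Gamma(a)(x)$ is a nonnegative quadratic form in $\nabla a(x)$. Fix $f$ smooth and bounded with $\inf f > 0$, and set $g \eqdef \log f$. This $g$ is smooth and bounded because $f$ is bounded above and bounded away from zero, so it is an admissible competitor. By the chain rule for the diffusion operator, $\Gamma(f,a) = f\,\Gamma(g,a)$ and $\Gamma(f)/f = f\,\Gamma(g)$.

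For any admissible $a$, bilinearity and positivity of $\Gamma$ give the pointwise inequality
\begin{equ}
0 \leqslant f\,\Gamma(g-a) = f\,\Gamma(g) - 2f\,\Gamma(g,a) + f\,\Gamma(a) = \frac{\Gamma(f)}{f} - 2\Gamma(f,a) + f\,\Gamma(a)\;,
\end{equ}
where the factor $f>0$ preserves the sign. Integrating against $\mu_V$ and using \eqref{eq:IbyP} in the form $\CE(f,a) = \int \Gamma(f,a)\,\d\mu_V$ yields
\begin{equ}
2\,\CE(f,a) - \mathbb{E}_{\mu_V}\big(f\,\Gamma(a)\big) \leqslant \CI(f)\;.
\end{equ}
Equality holds exactly when $\Gamma(g-a)=0$ $\mu_V$-a.e. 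In particular it holds at $a = g = \log f$, where the left side equals $2\int f\Gamma(g)\,\d\mu_V - \int f\Gamma(g)\,\d\mu_V = \int \Gamma(f)/f\,\d\mu_V$. Hence the supremum equals $\CI(f)$ and is attained at $a = \log f$.

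The only technical points are that all integrals are finite and that \eqref{eq:IbyP} applies to the pair $(f,a)$. For the Langevin operator, $\Gamma(f,a) = \nabla f\cdot\nabla a$. So one needs $\nabla f,\nabla a \in L^2(\mu_V)$, or else one takes smooth bounded functions with bounded derivatives, or functions in $\CA$, as in Assumption~\ref{ass:standard_algebra}.

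I expect this integrability and domain issue to be the only obstacle, and a mild one. I would handle it by restricting the supremum to $\CA$, or to functions with bounded gradients, where all terms are finite. The pointwise inequality above needs no integrability at all, so the upper bound holds for every admissible $a$. For attainment, $\log f$ lies in the same class whenever $f$ does, by the chain rule and $\inf f>0$.
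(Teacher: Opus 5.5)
Your proof is correct and is essentially the paper's argument. It uses the chain rule $\Gamma(f,a)=f\,\Gamma(\log f,a)$, completes the square pointwise via $f\,\Gamma(\log f - a)\geqslant 0$, integrates against $\mu_V$, and gets equality at $a=\log f$. One caveat on your domain remark: if you restrict the supremum to $\CA$, closure of $\CA$ under $\log$ is not automatic, so the class of smooth bounded functions with bounded gradient is the safer choice for attainment.
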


\begin{proof}
    By the diffusion chain rule, $\Gamma(f,a) = f\,\Gamma(\log f, a)$, so completing the square gives, for every test function $a$,
    \begin{equ}
        2\,\Gamma(f,a) - f\,\Gamma(a) = f\,\Big(\Gamma(\log f) - \Gamma\big(a - \log f\big)\Big) \leqslant f\,\Gamma(\log f) = \frac{\Gamma(f)}{f}\;.
    \end{equ}
    Taking $\mathbb{E}_{\mu_{V}}$ and recalling $\CE(f,a) = \mathbb{E}_{\mu_{V}}\big(\Gamma(f,a)\big)$ from \eqref{eq:IbyP} shows that the supremum is bounded above by $\CI(f)$, with equality at $a = \log f$, which is an admissible test function since $f$ is bounded with $\inf f > 0$, so that $\log f$ is smooth and bounded.
\end{proof}

\begin{lemma}[Fisher Information Decay]\label{lem:fisher_decay}
    Assume that $\Gamma_{2} \geqslant \kappa\,\Gamma$, and let $f$ be smooth and bounded with $\inf f > 0$. Then
    \begin{equ}
        \CI(P_{s}f) \leqslant e^{-2\kappa s}\,\CI(f) \qquad \text{for all } s \geqslant 0\;.
    \end{equ}
\end{lemma}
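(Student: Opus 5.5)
The plan is to combine the variational characterisation of Lemma~\ref{lem:fisher_variational} with the gradient estimate of Proposition~\ref{prop:comm_gradient_estimate}, using symmetry of $P_s$ with respect to $\mu_V$. This is the route that adapts to the noncommutative setting, where the carr\'e du champ is order-valued and pointwise chain rules for $\Gamma(f)/f$ are unavailable.

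\textbf{Step 1 (positivity is preserved).} Fix $s\geqslant 0$ and set $f_s\eqdef P_sf$. By the Markov property, $P_s$ preserves bounds, so $\inf f\leqslant f_s\leqslant \sup f$. Moreover $f_s\in\CA$ by Assumption~\ref{ass:standard_algebra}. Hence $f_s$ is admissible in Lemma~\ref{lem:fisher_variational}, and
\begin{equ}
\CI(f_s)=2\,\CE(f_s,a)-\mathbb{E}_{\mu_V}\big(f_s\,\Gamma(a)\big),\qquad a\eqdef \log f_s\;.
\end{equ}

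\textbf{Step 2 (move $P_s$ onto the test function).} By reversibility, $P_s$ commutes with $L$ on $\CA$, so
\begin{equ}
\CE(P_sf,a)=-\mathbb{E}_{\mu_V}\big((P_sf)\,La\big)=-\mathbb{E}_{\mu_V}\big(f\,LP_sa\big)=\CE(f,P_sa)\;.
\end{equ}
For the second term, symmetry gives $\mathbb{E}_{\mu_V}\big((P_sf)\Gamma(a)\big)=\mathbb{E}_{\mu_V}\big(f\,P_s\Gamma(a)\big)$.

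\textbf{Step 3 (apply the gradient estimate).} Proposition~\ref{prop:comm_gradient_estimate} gives $P_s\Gamma(a)\geqslant e^{2\kappa s}\Gamma(P_sa)$. Since $f>0$, this yields
\begin{equ}
\mathbb{E}_{\mu_V}\big(f\,P_s\Gamma(a)\big)\geqslant e^{2\kappa s}\,\mathbb{E}_{\mu_V}\big(f\,\Gamma(P_sa)\big)\;.
\end{equ}
Writing $b\eqdef e^{2\kappa s}P_sa$, we obtain
\begin{equ}
\CI(f_s)\leqslant 2\,\CE(f,P_sa)-e^{2\kappa s}\,\mathbb{E}_{\mu_V}\big(f\,\Gamma(P_sa)\big)
= e^{-2\kappa s}\Big(2\,\CE(f,b)-\mathbb{E}_{\mu_V}\big(f\,\Gamma(b)\big)\Big)\;,
\end{equ}
using that $\Gamma$ is quadratic. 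By Lemma~\ref{lem:fisher_variational}, applied to $f$ with the test function $b$, the bracket is at most $\CI(f)$. This gives the claim.

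\textbf{Main obstacle: admissibility of $b$.} The delicate point is that $b=e^{2\kappa s}P_s\log P_sf$ must be an allowed test function in the supremum. Two things need checking.
\begin{itemize}
\item $\log f_s$ belongs to $\CA$, or can be approximated in form norm. I would handle this through the extension of Lemma~\ref{lem:fisher_variational} to the form domain $\mathcal{D}_{\CE}$: the inequality $2\Gamma(f,a)-f\Gamma(a)\leqslant\Gamma(f)/f$ holds for every $a$, and both sides are continuous in the form norm because $f$ is bounded.
\item $P_s$ maps this class into itself. This follows from stability of $\CA$ under $P_s$ together with boundedness of $P_s$ in the form norm.
\end{itemize}
The gradient estimate is likewise extended from $\CA$ to $\mathcal{D}_{\CE}$ by density.
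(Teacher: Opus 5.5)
Your proposal is correct and follows essentially the same route as the paper: the variational characterisation of $\CI$, the duality $\CE(P_sf,a)=\CE(f,P_sa)$, the gradient estimate moved onto the weight $f$ via symmetry of $P_s$, and the rescaled test function $e^{2\kappa s}P_sa$ fed back into the upper bound. The only differences are cosmetic: you evaluate at the maximiser $a=\log P_sf$ rather than carrying the supremum, and you spell out why $e^{2\kappa s}P_sa$ is an admissible test function, which the paper simply asserts.
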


\begin{proof}
    We use two consequences of the symmetry of $P_{s}$ on $L^{2}(\mu_{V})$. First, $\CE(P_{s}f,a) = \CE(f,P_{s}a)$, since $P_{s}$ commutes with $L$. Second, combined with the gradient estimate \eqref{eq:comm_gradient_estimate}, symmetry gives the weighted estimate
    \begin{equ}[eq:weighted_GE]
        \mathbb{E}_{\mu_{V}}\big(f\,\Gamma(P_{s}a)\big) \leqslant e^{-2\kappa s}\,\mathbb{E}_{\mu_{V}}\big(f\,P_{s}\Gamma(a)\big) = e^{-2\kappa s}\,\mathbb{E}_{\mu_{V}}\big((P_{s}f)\,\Gamma(a)\big)\;.
    \end{equ}
    Since $P_{s}$ is positivity preserving and $P_{s}\bone = \bone$, we have $\inf f \leqslant P_{s}f \leqslant \sup f$, so Lemma~\ref{lem:fisher_variational} applies to $P_{s}f$, giving
    \begin{equs}
        \CI(P_{s}f) &= \sup_{a}\Big\{2\,\CE(P_{s}f,a) - \mathbb{E}_{\mu_{V}}\big((P_{s}f)\,\Gamma(a)\big)\Big\} =\\
        &= \sup_{a}\Big\{2\,\CE(f,P_{s}a) - \mathbb{E}_{\mu_{V}}\big((P_{s}f)\,\Gamma(a)\big)\Big\} \leqslant\\
        &\leqslant \sup_{a}\Big\{2\,\CE(f,P_{s}a) - e^{2\kappa s}\,\mathbb{E}_{\mu_{V}}\big(f\,\Gamma(P_{s}a)\big)\Big\} =\\
        &= e^{-2\kappa s}\,\sup_{a}\Big\{2\,\CE\big(f, e^{2\kappa s}P_{s}a\big) - \mathbb{E}_{\mu_{V}}\Big(f\,\Gamma\big(e^{2\kappa s}P_{s}a\big)\Big)\Big\} \leqslant\\
        &\leqslant e^{-2\kappa s}\,\CI(f)\;,
    \end{equs}
    where the second line is the duality $\CE(P_{s}f,a) = \CE(f,P_{s}a)$ applied inside the supremum, the third uses \eqref{eq:weighted_GE}, the fourth uses that $\Gamma$ is quadratic, and the last that $e^{2\kappa s}P_{s}a$ is again an admissible test function, together with the upper bound in \eqref{eq:fisher_variational}.
\end{proof}

\begin{theorem}[Logarithmic Sobolev inequality for Langevin dynamics]\label{thm:LSI_Langevin}
    Let $\mu_V$ be above, and suppose that $\operatorname{Hess}(V) \geqslant \kappa > 0$.
    Then, for all smooth $f > 0$,
    \begin{equ}
        \operatorname{Ent}_{\mu_{V}}^{\Phi}(f) \leqslant \frac{1}{2\kappa}\,\mathbb{E}_{\mu_{V}}\Big(\frac{\Gamma(f)}{f}\Big)\;.
    \end{equ}
\end{theorem}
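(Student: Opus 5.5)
We prove this by the entropy-dissipation route along the semigroup, using Lemma~\ref{lem:fisher_decay} as the key input. By Example~\ref{ex:langevin}, the hypothesis $\operatorname{Hess}(V)\geqslant\kappa$ gives $\Gamma_2\geqslant\kappa\Gamma$, so Lemma~\ref{lem:fisher_decay} applies. First reduce to $f$ smooth and bounded with $\inf f>0$. For general smooth $f>0$, truncate, e.g.\ take $f_\varepsilon=\min(f,1/\varepsilon)+\varepsilon$ suitably smoothed, and pass to the limit using monotone/dominated convergence on both sides. Since both sides are $1$-homogeneous in $f$, we may also normalise $\mathbb{E}_{\mu_V}f=1$.

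Set $H(s)\eqdef\mathbb{E}_{\mu_V}\big((P_sf)\log(P_sf)\big)$. Since $\inf f\leqslant P_sf\leqslant\sup f$, the function $\log P_sf$ is bounded and lies in the domain where integration by parts \eqref{eq:IbyP} is legitimate (by Assumption~\ref{ass:standard_algebra}, approximating within $\CA$ if needed). Differentiate, using invariance $\mathbb{E}_{\mu_V}LP_sf=0$ and the diffusion chain rule:
\begin{equ}
H'(s)=\mathbb{E}_{\mu_V}\big(LP_sf\,(\log P_sf+1)\big)=-\CE(P_sf,\log P_sf)=-\mathbb{E}_{\mu_V}\Big(\frac{\Gamma(P_sf)}{P_sf}\Big)=-\CI(P_sf)\;.
\end{equ}
Integrate from $0$ to $T$ and bound the integrand with Lemma~\ref{lem:fisher_decay}:
\begin{equ}
H(0)-H(T)=\int_0^T\CI(P_sf)\,\d s\leqslant\CI(f)\int_0^Te^{-2\kappa s}\,\d s\leqslant\frac{1}{2\kappa}\CI(f)\;.
\end{equ}

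Finally, let $T\to\infty$. Ergodicity (Section~\ref{ss:invariant_measures}) gives $P_Tf\to\mathbb{E}_{\mu_V}f=1$ in $L^2(\mu_V)$. Since $u\mapsto u\log u$ is Lipschitz on $[\inf f,\sup f]$ and $P_Tf$ stays in that interval, $H(T)\to 0=\Phi(\mathbb{E}_{\mu_V}f)$. Hence $\operatorname{Ent}^\Phi_{\mu_V}(f)=H(0)-\lim_T H(T)\leqslant\frac{1}{2\kappa}\CI(f)$.

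The main obstacle is the justification of the differentiation of $H$ and the identity $\CE(P_sf,\log P_sf)=\CI(P_sf)$. This needs $P_sf$ in $\CA$ (or in the form domain), with $\log P_sf$ admissible. For this I would rely on the stability of $\CA$ under $P_s$ in Assumption~\ref{ass:standard_algebra}, together with the uniform bounds on $P_sf$. The concluding extension to unbounded $f$ by truncation is routine, using lower semicontinuity of entropy and convergence of the Fisher information for the truncations.
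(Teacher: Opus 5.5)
Your proposal is correct and follows essentially the same route as the paper. Both arguments reduce to bounded $f$ with $\inf f>0$, write the entropy through the dissipation identity $\frac{\d}{\d s}\mathbb{E}_{\mu_V}\Phi(P_sf)=-\CI(P_sf)$, bound the integrand with Lemma~\ref{lem:fisher_decay} (available because $\operatorname{Hess}(V)\geqslant\kappa$ gives $\Gamma_2\geqslant\kappa\Gamma$), and use ergodicity to remove the endpoint term. The only difference is that you integrate over $[0,T]$ and justify $H(T)\to\Phi(\mathbb{E}_{\mu_V}f)$ explicitly, via the uniform bounds on $P_Tf$ and Lipschitz continuity of $u\log u$, whereas the paper absorbs this step into the phrase ``ergodicity gives''.
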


\begin{proof}
As we discussed in Section~\ref{ss:invariant_measures},  the semigroup $P_{t}$ is ergodic with respect to $\mu_{V}$. Since $\Phi''(u) = 1/u$, ergodicity gives
\begin{equ}[eq:comm_entropy_dissipation]
\operatorname{Ent}_{\mu_{V}}^{\Phi}(f) = -\int_{0}^{\infty}\partial_{s}\,\mathbb{E}_{\mu_{V}}\Phi(P_{s}f)\,\d s = \int_{0}^{\infty}\mathbb{E}_{\mu_{V}}\Big(\frac{\Gamma(P_{s}f)}{P_{s}f}\Big)\,\d s\;,
\end{equ}
where the last equality uses the integration-by-parts formula \eqref{eq:IbyP}, which is a consequence of the diffusion property. The right-hand side is $\int_{0}^{\infty}\CI(P_{s}f)\,\d s$ in the notation \eqref{eq:fisher_variational}. We may moreover assume that $f$ is bounded with $\inf f > 0$, the general case following by truncation and monotone convergence.
As $\operatorname{Hess}(V) \geqslant \kappa$ is equivalent to $\Gamma_{2} \geqslant \kappa\,\Gamma$, Lemma~\ref{lem:fisher_decay} gives $\CI(P_{s}f) \leqslant e^{-2\kappa s}\,\CI(f)$, and the claim follows upon integrating, since $\int_{0}^{\infty}e^{-2\kappa s}\,\d s = 1/(2\kappa)$.
\end{proof}

\begin{remark}
The feature of $\Phi(u) = u\log u$ used above is that $\Phi''(u) = 1/u$, so that the dissipation $\mathbb{E}_{\mu_{V}}\big(\Phi''(f)\,\Gamma(f)\big)$ equals $\CE(f,\log f)$; it is this logarithmic structure that underlies the variational characterisation \eqref{eq:fisher_variational}, in which the supremum is attained at $\log f$. 

Suppose now that one has the strengthened gradient estimate $\sqrt{\Gamma(P_{t}f)} \leqslant e^{-\kappa t}\,P_{t}\big(\sqrt{\Gamma(f)}\big)$, and note that the function $(u,v) \mapsto \Phi''(u)v^{2}$ is jointly convex precisely when $1/\Phi''$ is concave. For such $\Phi$, squaring the gradient estimate and applying Jensen's inequality for this function to the pair $\big(f,\sqrt{\Gamma(f)}\big)$ yields $\Phi''(P_{s}f)\,\Gamma(P_{s}f) \leqslant e^{-2\kappa s}\,P_{s}\big(\Phi''(f)\Gamma(f)\big)$. Since the derivative appearing in the integrand of \eqref{eq:comm_entropy_dissipation} is, for a general convex $\Phi$, given by $\partial_{s}\,\mathbb{E}_{\mu_{V}}\Phi(P_{s}f) = -\mathbb{E}_{\mu_{V}}\big(\Phi''(P_{s}f)\,\Gamma(P_{s}f)\big)$, bounding the right-hand side by the display above, using the invariance of $\mu_{V}$, and integrating over $s$ extends the theorem to this class of $\Phi$.

Our argument here is meant to motivate the free probabilistic proof of Section~\ref{ss:LSI_NC}: the characterisation \eqref{eq:fisher_variational} and the duality $\CE(P_{s}f,a) = \CE(f,P_{s}a)$ carry over to that setting.
On the other hand, while the weighted gradient estimate \eqref{eq:weighted_GE} is immediate here, since the weight commutes with the carr\'{e} du champ, it is replaced by the logarithmic-mean gradient estimate \eqref{eq:lmge} of Proposition~\ref{prop:tensor-bridge} in the free probabilistic setting.
\end{remark}

\subsection{Hypercontractivity}\label{ss:hypercontractivity}

\begin{definition}[Hypercontractivity]\label{def:hypercontractivity}
Suppose $1 < p \leqslant q < \infty$. We say that a semigroup $P_{t}$ is \emph{hypercontractive} from $L^{p}$ to $L^{q}$ if
$\|P_{t}\|_{L^{p} \to L^{q}} \leqslant 1$.
\end{definition}

Given the logarithmic Sobolev constant $C > 0$, we define \emph{Nelson's shortest time} by
\begin{equ}
t_{N}(p,q) \eqdef \frac{C}{2}\log\frac{q-1}{p-1}\;,
\end{equ}
and the associated time-dependent exponent
\begin{equ}
q(t) \eqdef e^{2t/C}(p-1) + 1\;.
\end{equ}
Note that $q(t) > p$ for $t > 0$, so $P_{t}$ maps $L^{p}$ into a \emph{strictly smaller} space.

\begin{theorem}[Equivalence of Hypercontractivity and LSI]\label{thm:HC_LSI}
    Suppose $(E, \Gamma, \mu)$ is a Markov triple. Then $\operatorname{LSI}(C)$ holds if and only if for all $1 < p \leqslant q < \infty$ and all $t \geqslant t_N(p,q)$
    \begin{equ}[eq:HC]
        \|P_{t}\|_{L^{p} \to L^{q}} = 1 \;.
    \end{equ}
\end{theorem}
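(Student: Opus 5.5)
This is Gross's classical theorem, and I will prove it by differentiating the $L^{q(t)}$ norm along the semigroup. Here $\operatorname{LSI}(C)$ is read in Gross's form
\begin{equ}
\operatorname{Ent}_\mu(f^2)\leqslant 2C\,\CE(f),
\end{equ}
which is the normalisation consistent with the given $t_N$ and $q(t)$. Since $P_t$ is positivity preserving, $|P_tf|\leqslant P_t|f|$, so it suffices to take $f\in\CA$ bounded with $\inf f>0$. The general case then follows by truncation and density, as in Assumption~\ref{ass:standard_algebra} and the proof of Theorem~\ref{thm:LSI_Langevin}.

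\emph{LSI implies hypercontractivity.} Fix $p$ and set $F(t)=\|P_tf\|_{q(t)}$ with $q'(t)=\frac{2}{C}(q-1)$. Writing $f_t=P_tf$, the logarithmic derivative gives
\begin{equ}
\frac{F'(t)}{F(t)}=\frac{q'}{q^2}\,\frac{\operatorname{Ent}_\mu(f_t^q)}{\int f_t^q\,\d\mu}+\frac{\int f_t^{q-1}Lf_t\,\d\mu}{\int f_t^q\,\d\mu}.
\end{equ}
By \eqref{eq:IbyP} and the diffusion chain rule,
\begin{equ}
-\int f^{q-1}Lf\,\d\mu=\CE(f^{q-1},f)=\frac{4(q-1)}{q^2}\,\CE(f^{q/2}).
\end{equ}
Applying $\operatorname{LSI}(C)$ to $g=f_t^{q/2}$ yields $\operatorname{Ent}_\mu(f_t^q)\leqslant 2C\,\CE(f_t^{q/2})$, and the choice of $q'$ makes the two terms cancel exactly, so $F'\leqslant 0$. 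Hence $\|P_tf\|_{q(t)}\leqslant\|f\|_p$. For $q\leqslant q(t)$, which is equivalent to $t\geqslant t_N(p,q)$, monotonicity of $L^q$ norms under the probability measure $\mu$ gives $\|P_t\|_{p\to q}\leqslant 1$. Equality holds because $P_t\bone=\bone$.

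\emph{Hypercontractivity implies LSI.} I would fix $p=2$ and $t_0>0$ and set $q=q(t_0)$, so that $G(t)=\|P_tf\|_{q(t)}-\|f\|_2\leqslant 0$ on $[0,t_0]$ with $G(0)=0$. Then $G'(0)\leqslant 0$, and the derivative formula above at $t=0$, $q=2$ reads
\begin{equ}
\frac{1}{2C}\operatorname{Ent}(f^2)-\CE(f)\leqslant 0,
\end{equ}
which is $\operatorname{LSI}(C)$.

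The main obstacle is justifying the differentiation of $t\mapsto\int (P_tf)^{q(t)}\,\d\mu$. I would handle it using the bounds $\inf f\leqslant P_tf\leqslant\sup f$ and the stability of $\CA$ under $P_t$. These allow dominated convergence for both the $\log$ term and the $L$ term, together with the identity $\CE(f^{q-1},f)=\frac{4(q-1)}{q^2}\CE(f^{q/2})$ on positive functions in $\CA$.
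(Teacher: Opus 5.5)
Your proposal is correct and follows the paper's argument. In the forward direction you differentiate $t\mapsto\|P_tf\|_{q(t)}$, apply $\operatorname{LSI}(C)$ to $(P_tf)^{q/2}$ via the diffusion chain rule, and choose $q'=\frac{2}{C}(q-1)$ so that the two terms cancel; in the converse you differentiate at $t=0$ with $p=2$. You also make explicit three points the paper leaves implicit: the normalisation $\operatorname{Ent}_\mu(f^2)\leqslant 2C\,\CE(f)$, the passage from $t=t_N(p,q)$ to all $t\geqslant t_N(p,q)$, and the equality $\|P_t\|_{p\to q}=1$ coming from $P_t\bone=\bone$.
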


\begin{corollary}\label{cor:eigenfunctions_Lq}
All eigenfunctions of $L$ belong to $L^{q}$ for every $2 \leqslant q < \infty$.
\end{corollary}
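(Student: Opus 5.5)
The plan is to combine the eigenvalue equation with hypercontractivity from Theorem~\ref{thm:HC_LSI}, under the standing hypothesis of this subsection that an $\operatorname{LSI}(C)$ holds for the Markov triple; for instance, the Langevin case with $\operatorname{Hess}(V)\geqslant\kappa>0$ gives this through Theorem~\ref{thm:LSI_Langevin}. Take $f\in\mathcal{D}_L\subset L^2(\d\mu)$ with $Lf=-\lambda f$. Since $L$ is symmetric with respect to $\mu$ and the Dirichlet form is non-negative, $\lambda=\CE(f)/\|f\|_{L^2}^2\geqslant 0$. The semigroup acts diagonally on $f$, so we first establish $P_tf=e^{-\lambda t}f$ for all $t\geqslant 0$. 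This follows because $u(t)=e^{\lambda t}P_tf$ satisfies $u'(t)=e^{\lambda t}P_t(L+\lambda)f=0$ in $L^2$. Justifying this only requires $f\in\mathcal{D}_L$ and the commutation $\frac{\d}{\d t}P_tf=P_tLf$.

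Next, fix $2\leqslant q<\infty$. The case $q=2$ is trivial, so assume $q>2$ and apply Theorem~\ref{thm:HC_LSI} with $p=2$. For $t\geqslant t_N(2,q)=\frac{C}{2}\log(q-1)$, we have $\|P_t\|_{L^2\to L^q}\leqslant 1$. Therefore
\begin{equ}
e^{-\lambda t}\|f\|_{L^q}=\|P_tf\|_{L^q}\leqslant\|f\|_{L^2}<\infty\;,
\end{equ}
which gives $\|f\|_{L^q}\leqslant e^{\lambda t_N(2,q)}\|f\|_{L^2}$. In particular, $f\in L^q$, and we obtain the quantitative bound $\|f\|_{L^q}\leqslant (q-1)^{\lambda C/2}\|f\|_{L^2}$.

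The only delicate point is the first step: identifying $P_tf$ with $e^{-\lambda t}f$ when $f$ is an arbitrary $L^2$ eigenfunction rather than an element of the core $\CA$. I would handle it with spectral calculus for the self-adjoint generator ($\mu$ is reversible). Here $P_t=e^{tL}$, and $f$ lies in the spectral subspace $\ker(L+\lambda)$, so $P_tf=e^{-\lambda t}f$ is immediate. If the eigenfunction is only assumed to be in $L^p$ for some $p>1$ instead of $L^2$, I would first bootstrap with $P_t$ from $L^p$ to $L^{q(t)}$ repeatedly, using the same identity, to raise the integrability until it reaches $L^2$ and beyond.
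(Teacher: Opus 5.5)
Your proposal is correct and follows the paper's argument: you use $P_tf=e^{-\lambda t}f$ together with $L^2\to L^q$ hypercontractivity for $t\geqslant t_N(2,q)=\frac{C}{2}\log(q-1)$, which gives $\|f\|_q\leqslant(q-1)^{C\lambda/2}\|f\|_2$. Your justification of $P_tf=e^{-\lambda t}f$ for $f\in\mathcal{D}_L$, and your observation that $\lambda\geqslant 0$, are sound additions to what the paper leaves implicit; the $L^p$ bootstrapping aside is not needed.
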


\begin{proof}[Proof of Corollary~\ref{cor:eigenfunctions_Lq}]
Suppose $Lf = -\lambda f$ with $\lambda > 0$. Then $P_{t}f = e^{-\lambda t}f$, and for $p = 2$ and all $t \geqslant t_{N}(2,q) = \frac{C}{2}\log(q-1)$,
\begin{equ}
\|P_{t}f\|_{q} = e^{-\lambda t}\|f\|_{q} \leqslant \|f\|_{2}\;.
\end{equ}
In particular, $\|f\|_{q} \leqslant (q-1)^{C\lambda/2}\|f\|_{2}$.
\end{proof}

\begin{proof}[Proof of Theorem~\ref{thm:HC_LSI}]
\emph{Hypercontractivity $\Rightarrow$ LSI.}
Suppose $f > 0$ is smooth. Define $F(t) \eqdef \|P_{t}f\|_{q(t)}$.
A computation gives
\begin{equs}
    \|P_{t}f\|_{q}^{q-1}\frac{\d }{\d t}\|P_{t}f\|_{q(t)} &= \frac{q'}{q}\bigg(\int_{E}(P_{t}f)^{q}\log P_{t}f\,\d \mu - \|P_{t}f\|_{q}^{q}\log\|P_{t}f\|_{q}\bigg) + \\
    &\qquad + \int_{E}(P_{t}f)^{q-1}LP_{t}f\,\d \mu\;.
\end{equs}
Evaluating at $t = 0$ with $p = 2$ (so $q(0) = 2$, $q'(0) = 2/C$), the hypercontractivity assumption gives $F'(0) \leqslant 0$, which yields $\operatorname{LSI}(C)$.

\emph{LSI $\Rightarrow$ Hypercontractivity.}
We show that $\|P_{t}f\|_{q(t)}$ is non-increasing.
Applying $\operatorname{LSI}(C)$ to the function $(P_{t}f)^{q/2}$ and using the chain rule for diffusion operators,
\begin{equ}
    \CE\big((P_{t}f)^{q/2}, (P_{t}f)^{q/2}\big) = \frac{q^{2}}{4}\int_{E}(P_{t}f)^{q-2}\,\Gamma(P_{t}f)\,\d \mu\;,
\end{equ}
we obtain
\begin{equ}
    \int_{E}(P_{t}f)^{q}\log P_{t}f\,\d \mu - \|P_{t}f\|_{q}^{q}\log\|P_{t}f\|_{q} \leqslant \frac{qC}{2}\int_{E}(P_{t}f)^{q-2}\,\Gamma(P_{t}f)\,\d \mu\;.
\end{equ}
By integration by parts and the chain rule,
\begin{equ}
    \int_{E}(P_{t}f)^{q-1}LP_{t}f\,\d \mu = -(q-1)\int_{E}(P_{t}f)^{q-2}\,\Gamma(P_{t}f)\,\d \mu\;.
\end{equ}
Combining with the derivative formula above, we obtain
\begin{equ}
    \|P_{t}f\|_{q}^{q-1}\frac{\d }{\d t}\|P_{t}f\|_{q(t)} \leqslant \Big(\frac{q'C}{2} - (q-1)\Big)\int_{E}(P_{t}f)^{q-2}\,\Gamma(P_{t}f)\,\d \mu = 0
\end{equ}
for $q(t) = e^{2t/C}(p-1) + 1$, since this is the solution to $q'/(q-1) = 2/C$.
Therefore $\|P_{t}f\|_{q(t)}$ is non-increasing, with $\|P_{0}f\|_{q(0)} = \|f\|_{p}$.
\end{proof}

\section{Free It\^{o} Processes and Formula}\label{sec:free_ito}

\subsection{Free Gaussian Functor and Semicircular Brownian Motion}\label{ss:free_gaussian}

We apply Voiculescu's free Gaussian functor to the real Hilbert space $\mfh = L^{2}(\R;\R^{m})$, yielding a corresponding free Fock space together with creation and annihilation operators $\alpha^{\dagger}(f), \alpha(g)$ for all $f, g \in \mfh$. The \emph{free Gaussian field} is defined as \begin{equ} \phi(f) \eqdef \alpha^{\dagger}(f) + \alpha(f)\;. \end{equ}

Let $\CM_X$ denote the von Neumann algebra generated by $\{\phi(f)\mid f\in\mfh\}$, equipped with the vacuum state $\rho_X(a)\eqdef\Braket{\Omega,a\Omega}$, where $\Omega$ is the vacuum vector. The state $\rho_X$ is faithful and tracial.

For $t\geqslant0$ and $i\in[m]$, set $X_t^i\eqdef\phi(\mathbf1_{[0,t]}\otimes e_i)$, where $(e_i)_{i\in[m]}$ is the standard basis of $\RR^m$, and let $\CM_{X,t}$ be generated by $\{X_s^i\mid 0\leqslant s\leqslant t,\ i\in[m]\}$. Then $X_t=(X_t^1,\dots,X_t^m)$ is an $m$-dimensional semicircular Brownian motion in $(\CM_X,(\CM_{X,t})_{t\geqslant0},\rho_X)$.

\subsection{Free It\^{o} Formula and Generator}\label{ss:free_ito_formula}

Given a self-adjoint potential $V \in \Poly$, we study the system of free stochastic differential equations 
\begin{equ}[eq:SDEs] 
    d Y_{t} = - DV(Y_{t})\,d \! t + \sqrt{2}\,d X_{t} \;, 
\end{equ} 
inside a tracial von Neumann algebra $(\hat\CM,\hat\rho)$ containing $(\CM_X,\rho_X)$.

We assume that $\hat\rho$ is a faithful tracial state on $\hat{\CM}$, that $Y_0=(Y_0^1,\dots,Y_0^m)$ is freely independent of $(X_t)_{t\geqslant0}$ and has law $\omega$, and that \eqref{eq:SDEs} has a global, adapted solution. Let $\digamma_t\colon\Poly\to\hat\CM$ be the evaluation morphism determined by $\digamma_t(\indet^i)=Y_t^i$, and identify $\Poly$ with $\digamma_0(\Poly)$. We write 
\begin{equ} 
    \CM \eqdef W^*(Y_0) = \overline{\Poly}^{\,w^*}\;. 
\end{equ}
For $a \in \Poly$, set 
\begin{equ} 
    P_t a \eqdef \E_0[a(Y_t)] \; , 
\end{equ} 
where $\E_0 \colon \hat\CM \to \CM $ is the $\hat\rho$-preserving conditional expectation. 
\begin{assumption}[Invariance]\label{ass:standing_invariance} 
    We assume that $\hat\rho$ is an invariant state for the dynamics, i.e.\ that for all $t \geqslant 0$
    \begin{equ}
        \omega = \digamma_t^* \hat \rho \; . 
    \end{equ}  
\end{assumption}
Under the preceding hypotheses, the Markov property makes $(P_t)_{t\geqslant0}$ a semigroup, while invariance supplies its trace-preserving extensions.
\begin{proposition}\label{prop:basic_cont}
    $(P_t)_{t \geqslant 0}$ extends to a unit preserving semigroup of contractions on $\CL^p(\omega)$ for all $p \in [1,\infty]$, i.e.\
    \begin{equ}[eq:Pcont] 
        \|P_t a\|_{\CL^p(\omega)} \leqslant \|a\|_{\CL^p(\omega)} \; ,
    \end{equ} 
    and it is completely positive. Furthermore, it is also strongly continuous on $\CL^p(\omega)$ for $p \in [1,\infty)$.
\end{proposition}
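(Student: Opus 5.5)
The semigroup $P_t$ factors as a composition of two maps, each of which has the required properties. For $a\in\Poly$, $P_t a=\E_0\circ\digamma_t(a)$. The key observation is that $\digamma_t$ extends to a trace-preserving normal $*$-homomorphism
\begin{equ}
\pi_t\colon \CM\to\hat\CM\;.
\end{equ}
The argument then runs in the following order.

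\emph{Step 1: extend $\digamma_t$.} By Assumption~\ref{ass:standing_invariance}, $\hat\rho\circ\digamma_t=\omega=\hat\rho\circ\digamma_0$ on $\Poly$. Hence the GNS representations of $(\Poly,\omega)$ and of the algebra generated by $Y_t$ under $\hat\rho$ coincide. The map $a(Y_0)\Omega\mapsto a(Y_t)\Omega$ is isometric in $\CL^2$, and it intertwines the left multiplication operators. Since $Y_t^i$ is bounded, it therefore extends to a normal $*$-isomorphism $\pi_t$ from $\CM=W^*(Y_0)$ onto $W^*(Y_t)\subset\hat\CM$. This map is trace preserving: $\hat\rho\circ\pi_t=\omega$.

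\emph{Step 2: properties of $\E_0$.} The conditional expectation $\E_0$ is normal, unital, completely positive and $\hat\rho$-preserving. Hence $P_t\eqdef\E_0\circ\pi_t$ extends to a normal, unital, completely positive map $\CM\to\CM$ with $\omega\circ P_t=\omega$.

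\emph{Step 3: contractivity on $\CL^p$.} A $*$-homomorphism that preserves the trace is isometric on every $\CL^p$. The conditional expectation is an $\CL^p$-contraction for all $p\in[1,\infty]$; this follows by duality from $p=\infty$ and $p=1$, or directly via Kadison--Schwarz plus interpolation. Therefore \eqref{eq:Pcont} holds on $\CM$. Since $\CM$ is dense in $\CL^p(\omega)$ for $p<\infty$, $P_t$ extends by continuity. The semigroup law is given as a consequence of the Markov property, as stated before the proposition. Unit preservation is immediate since $\pi_t(\bone)=\bone$.

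\emph{Step 4: strong continuity.} This is the main obstacle. I would first prove it for $a\in\Poly$ and $p=2$. Using the SDE \eqref{eq:SDEs} and the free Burkholder/It\^o isometry, $\|Y_t^i-Y_0^i\|_\infty\to0$ as $t\to0$: the drift $DV(Y_s)$ is a polynomial in the bounded $Y_s$, and a uniform bound on $\|Y_s\|$ for small $s$ is needed from the global solution. Then $\|a(Y_t)-a(Y_0)\|_\infty\to0$ by expanding monomials, so
\begin{equ}
\|P_ta-a\|_{\CL^p}=\|\E_0(a(Y_t)-a(Y_0))\|_{\CL^p}\leqslant \|a(Y_t)-a(Y_0)\|_\infty\to0\;.
\end{equ}
Continuity at general $t$ follows from the semigroup property. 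Density of $\Poly$ in $\CL^p(\omega)$ for $p<\infty$, combined with the uniform bound \eqref{eq:Pcont} and an $\varepsilon/3$ argument, extends this to all of $\CL^p(\omega)$.

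If a uniform norm bound on $Y_s$ is unavailable, I would instead use the $\CL^2$ estimate $\|Y_t-Y_0\|_2\to0$ together with the boundedness of each $Y_s$ to control monomials telescopically.
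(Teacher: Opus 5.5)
Your proof is correct, but it reaches contractivity by a more explicit route than the paper. The paper treats $P_t$ abstractly. It invokes the Markov property from Biane--Speicher to obtain a normal, unital, completely positive, $\omega$-preserving semigroup, and then cites Accardi--Frigerio--Lewis for the fact that such maps are $\CL^p(\omega)$-contractions.

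You instead factor $P_t=\E_0\circ\pi_t$ through the trace-preserving normal $*$-isomorphism $\pi_t\colon W^*(Y_0)\to W^*(Y_t)$ supplied by equality of laws (Assumption~\ref{ass:standing_invariance}). Contractivity then reduces to two standard facts: $\pi_t$ is an $\CL^p$-isometry, and $\E_0$ is an $\CL^p$-contraction. This is self-contained. It also makes precise how $P_t$ extends from $\Poly$ to $\CM$, a point the paper leaves implicit under ``the associated reduced evolution.''

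For strong continuity the two arguments are the same in substance: norm continuity of trajectories, followed by contractivity and density of $\Poly$. The paper bounds $\|P_ta-P_sa\|_{\CL^p(\omega)}\leqslant\|a(Y_t)-a(Y_s)\|_{\CL^p(\hat\rho)}$ at every $s$ directly. You work at $t=0$ and propagate with the semigroup law, which is equally valid.

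The uniform bound on $\|Y_s\|$ that you flag as an open requirement is in fact already provided by your Step 1. Since $*$-homomorphisms are contractive (here even isometric), $\|Y_s^i\|=\|\pi_s(Y_0^i)\|\leqslant\|Y_0^i\|$ for all $s$.

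Because the noise is additive, you then get directly $\|Y_t^i-Y_0^i\|\leqslant t\sup_{s\leqslant t}\|D_iV(Y_s)\|+2\sqrt{2t}$. No Burkholder inequality is needed for this. Your fallback, as phrased (``boundedness of each $Y_s$''), would not suffice on its own, since the telescoping estimate for monomials needs exactly this uniform control. Stationarity supplies it, so the argument closes.
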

\begin{proof}
Pathwise uniqueness and freeness of the future increments give the Markov property as in \cite[Section~3.2]{BianeSpeicher2001}. The associated reduced evolution is therefore an ultraweakly continuous, unital completely positive semigroup preserving $\omega$ and thus an $\CL^p(\omega)$ contraction for all $p \in [1,\infty]$; see \cite[Theorem~2.1]{AccardiFrigerioLewis1982}.

The norm continuity in time of the solution trajectories follows as in \cite[Theorem~3.1]{BianeSpeicher2001}. Hence, for $a\in\Poly$,
    \begin{equ}
        \|P_ta-P_sa\|_{\CL^p(\omega)} \leqslant \|a(Y_t)-a(Y_s)\|_{\CL^p(\hat\rho)}   \xrightarrow{t \to s} 0 \; .
    \end{equ}
Contractivity and density of $\Poly$ then give strong continuity on  $\CL^p(\omega)$ for $1\leqslant p<\infty$. The case $p=2$, followed by density of $\CL^2(\omega)$ in $\CL^1(\omega)$, also gives strong (in time) ultraweak continuity on $\CM$.
\end{proof}

 With the normalisation of the free Gaussian field fixed above, the free It\^{o} formula gives, for $a\in\Poly$, 
    \begin{equs}[eq:free_ito] 
        a(Y_{t}) &= a(Y_{0}) + \sum_{i=1}^{m}\int_{0}^{t}(\partial_{i}a)(Y_{s})\tw \d Y_{s}^{i} + 2\sum_{i=1}^{m}\int_{0}^{t}\eta\big(\partial_{i}^{2}a\big)(Y_{s})\,\d s = \\
        &= a(Y_{0}) + \sqrt{2}\sum_{i=1}^{m}\int_{0}^{t}(\partial_{i}a)(Y_{s})\tw \d X_{s}^{i} + \\
        &\qquad + \sum_{i=1}^{m} \int_{0}^{t}\bigg( - (\partial_{i}a)(Y_{s})\tw D_{i}V(Y_{s}) + 2\eta(\partial_{i}^{2}a)(Y_{s})\bigg) \, \d s \; , 
    \end{equs} 
where $\eta \colon \hat{\mathcal{M}}^{\otimes 3} \rightarrow \hat{\mathcal{M}}$ maps $A \otimes B \otimes C$ to $\hat{\rho}(B)AC$, cf.\ \cite{nikitopoulos2022ito}. Taking the conditional expectation in \eqref{eq:free_ito} identifies the generator on its polynomial core.

Pulling the generator back to the polynomial core gives 
\begin{equ}[eq:generator] 
    \Lin_{V}\, a = -\sum_{i=1}^{m}(\partial_{i}a)\tw D_{i}V + 2\Delta_{\omega}a\;. 
\end{equ} 
Here we have defined the \emph{reduced Laplacian} $\Delta_{\omega} \colon \Poly \rightarrow \Poly$ by 
\begin{equ} 
    \Delta_{\omega}a \eqdef \frac{1}{2} \bar{\eta}\,\Delta a\;, 
\end{equ} 
where $\bar{\eta}(a\otimes b\otimes c)\eqdef\omega(b)ac$. Since $\Delta=2\sum_i(\partial_i\otimes\Id)\partial_i$, the finite-variation term in \eqref{eq:free_ito} is exactly $2\Delta_\omega a$. Notice that $\Lin_V\Poly\subset\Poly$, but $P_t\Poly$ need not be contained in $\Poly$ when $V$ is non-quadratic.

The normal Markov maps $P_t$ extend consistently to contractions on $\CL^p(\omega)$ for $1\leqslant p<\infty$. 

From this point onwards, we make the following standing assumption.

\begin{assumption}[Standing Symmetry and Core Assumption]\label{ass:standing_symmetry} 
    The semigroup $(P_t)_{t \geqslant 0}$ is $\omega$-symmetric: for all $a,b \in \CL^2(\omega)$ and $t \geqslant 0$
    \begin{equ}[eq:standing_symmetry] 
        \Braket{a,P_tb}_{\CL^2(\omega)} = \Braket{P_ta,b}_{\CL^2(\omega)} \; . 
    \end{equ}
    We will denote by $\Lin_V$ the non-positive self-adjoint generator of $P_t$ in $\CL^2(\omega)$ and by $(\Lin_V)_1$ the generator in $\CL^1(\omega)$. 
    
    Moreover, we assume that $\Poly$ is an operator core for the domain $D\left((-\Lin_V)^{3/2}\right)$ of $(-\Lin_V)^{3/2}$. 
\end{assumption}

\begin{remark}
    Since $P_t\bone=\bone$, symmetry implies invariance: 
    \begin{equ}[eq:standing_invariance] 
        \omega(P_ta) = \Braket{\bone,P_ta}_{\CL^2(\omega)} = \Braket{P_t\bone,a}_{\CL^2(\omega)} = \omega(a) \; . 
    \end{equ}
    In particular, $\hat\rho(a(Y_t))=\omega(P_ta)=\omega(a)$, so $\digamma_t^*\hat\rho=\omega$. Thus, stationarity can be seen as another manifestation of the standing symmetry assumption. The generator $\Lin_V$ is self-adjoint and non-positive on $\CL^2(\omega)$, with $\Lin_V\bone=0$. 
\end{remark}

\begin{remark}[Normalisation and Schwinger--Dyson Identity]\label{rem:normalisation_SD}
    The factor $\sqrt{2}$ in the noise in \eqref{eq:SDEs} makes the raw free difference quotient energy-normalised. Moreover, symmetry and $\Lin_V\indet^i=-D_iV$ imply 
    \begin{equ}[eq:standing_SD]
        (\omega\otimes\omega)(\partial_i a) = \omega(D_iV\,a)\;, 
    \end{equ}
    for $a\in\Poly$. Thus the conjugate variable is $D_iV$. Equivalently, the present dynamics is the factor-two time-rescaling of the unit-noise SDE with drift $-\frac12DV$.
\end{remark}

\begin{remark}\label{rem:DomainAss}
    The density of $\Poly$ in $D( (-\Lin_V)^{\frac{3}{2}} )$ is motivated by \cite[Lemma~19(i),(iii)]{AIHPB_2014__50_4_1404_0}. This result ensures that, for $f\in D\bigl((-\Lin_V)^{3/2}\bigr)$,
    \begin{equ}[eq:closed_gradient_generator_domain]
        \overline\nabla_i f\in D((-\Lin_V)^e)   \; , 
    \end{equ}
    Here $(-\Lin_V)^{e}$ denotes the enveloping extension of the generator, introduced in Section~\ref{sec:nc_gamma} below.
    In particular, $\overline{\nabla}_i \colon D\bigl((-\Lin_V)^{3/2}\bigr) \to  D((-\Lin_V)^e)$ is continuous for all $i \in [m]$.

    We may apply this lemma as the Schwinger--Dyson identity gives $\partial_j^*(\bone\otimes\bone)=D_jV\in\Poly$, while $\nabla_iD_jV\in\Poly^e\subset\CM^e$, and thus the hypotheses of the lemma are satisfied.

    Furthermore, this assumption ensures that $\Poly$ is dense in $D( (-\Lin_V)^{\alpha})$ for all $\alpha \in [0,\frac{3}{2}]$ by the H\"older inequality.

\end{remark}

We record a useful fact about the generator. 

\begin{lemma}[Preservation of the Involution]\label{lem:generator_star_preserving} 
    If $V = V^*$, then, for all $a \in \Poly$, 
    \begin{equ} 
        (\Lin_V a)^* = \Lin_V(a^*) \; . 
    \end{equ}
\end{lemma}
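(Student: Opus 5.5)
The plan is to verify the identity directly on the polynomial core, treating the two terms of \eqref{eq:generator} separately. By linearity and antilinearity of $*$ it suffices to take $a = \indet^{w}$ a monomial, so that $a^* = \indet^{\overleftarrow{w}}$. The basic tool is \eqref{eq:partial_star}, $\partial_i(a^*) = (\iota\,\partial_i a)^*$, where $*$ on $\Poly\otimes\Poly$ acts factorwise. One should be careful that $*$ on $\Poly\otimes\Poly$ is factorwise and not antimultiplicative across the tensor. Because of this, the flip $\iota$ is exactly what restores the correct order after the $\tw$ action.

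\emph{Drift term.} Write $\partial_i a = \sum a^{(1)}\otimes a^{(2)}$, so that $(\partial_i a)\tw D_iV = \sum a^{(1)}(D_iV)a^{(2)}$. Its adjoint is
\begin{equ}
\sum (a^{(2)})^*(D_iV)^*(a^{(1)})^* .
\end{equ}
Since $V=V^*$, we need $(D_iV)^* = D_iV$. I would check this on monomials: $D_i\indet^{w} = \sum_{w=w_1iw_2}\indet^{w_2}\indet^{w_1}$ has adjoint $\sum \indet^{\overleftarrow{w_1}}\indet^{\overleftarrow{w_2}}$. This is precisely $D_i\indet^{\overleftarrow{w}}$, because the decompositions of $\overleftarrow{w}$ are $\overleftarrow{w_2}\,i\,\overleftarrow{w_1}$. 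Hence $(D_i P)^* = D_i(P^*)$, and so $(D_iV)^*=D_iV$. On the other hand, \eqref{eq:partial_star} gives
\begin{equ}
\partial_i(a^*)\tw D_iV=\sum (a^{(2)})^*D_iV(a^{(1)})^* ,
\end{equ}
which matches the adjoint computed above.

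\emph{Laplacian term.} I would show that $\Delta_\omega(a^*) = (\Delta_\omega a)^*$. The first step is to write $(\partial_i\otimes\Id)\partial_i\indet^{w} = \sum_{w=w_1iw_2iw_3}\indet^{w_1}\otimes\indet^{w_2}\otimes\indet^{w_3}$. Then $\bar\eta$ of this is $\sum\omega(\indet^{w_2})\,\indet^{w_1}\indet^{w_3}$. Doing the same for $\overleftarrow{w}$, the decompositions are $\overleftarrow{w_3}\,i\,\overleftarrow{w_2}\,i\,\overleftarrow{w_1}$, giving $\sum\omega(\indet^{\overleftarrow{w_2}})\,\indet^{\overleftarrow{w_3}}\indet^{\overleftarrow{w_1}}$. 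Since $\omega$ is a state, $\omega(x^*)=\overline{\omega(x)}$, and this sum is exactly the adjoint of the former.

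The only genuinely delicate point is this use of $\omega(x^*)=\overline{\omega(x)}$, i.e.\ that $\omega$ is hermitian (which holds since $\omega$ is the law of a self-adjoint tuple under a state). The remaining care is bookkeeping of the reversed decompositions. Combining the two terms gives $(\Lin_V a)^*=\Lin_V(a^*)$.
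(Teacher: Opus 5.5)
Your proposal is correct and follows essentially the same route as the paper. The drift term goes through $(D_iV)^*=D_iV$ and the convention $\partial_i(a^*)=(\iota\,\partial_i a)^*$. The Laplacian term goes through the reversal bijection $w_1\,i\,w_2\,i\,w_3\mapsto\overleftarrow{w_3}\,i\,\overleftarrow{w_2}\,i\,\overleftarrow{w_1}$ on factorizations, together with $\omega(x^*)=\overline{\omega(x)}$.

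The only differences are cosmetic. The paper derives $D_i(a^*)=(D_ia)^*$ abstractly from $D_i=\mcb{m}\circ\iota\circ\partial_i$. It also packages the reversal as an antilinear involution $\sharp$ on $\Poly^{\otimes 3}$ satisfying $\Delta(a^*)=(\Delta a)^\sharp$ and $\bar\eta(W^\sharp)=(\bar\eta W)^*$. You instead check both facts directly on monomials.
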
 
\begin{proof} 
    Recall the convention~\eqref{eq:partial_star} from Section~\ref{sec:notation}: writing $\partial_{i}a = \sum a^{(1)} \otimes a^{(2)}$ with the summation left implicit (and the dependence on $i$ suppressed), it reads $\partial_{i}(a^{\ast}) = \sum (a^{(2)})^{\ast} \otimes (a^{(1)})^{\ast}$.

    We first observe that the cyclic derivative commutes with the $\ast$-operation. Since $D_{i} = \mcb{m} \circ \iota \circ \partial_{i}$ and $\iota$ acts componentwise, \eqref{eq:partial_star} gives 
    \begin{equs} 
        D_{i}(a^{\ast}) &= \mcb{m}\,\iota\,\partial_{i}(a^{\ast}) = \mcb{m}\Big(\textstyle\sum (a^{(1)})^{\ast} \otimes (a^{(2)})^{\ast}\Big) = \\
        &= \sum (a^{(1)})^{\ast}(a^{(2)})^{\ast} = \Big(\textstyle\sum a^{(2)} a^{(1)}\Big)^{\ast} = (D_{i}a)^{\ast}\;. 
    \end{equs} 
    In particular, since $V = V^{\ast}$, each $D_{i}V$ is self-adjoint.

    \emph{First-Order Term.} Using the definition $(x \otimes y)\tw c = xcy$, the self-adjointness of $D_{i}V$, and then \eqref{eq:partial_star}, 
    \begin{equs} 
        \Big(\sum_{i=1}^{m}(\partial_{i}a)\tw D_{i}V\Big)^{\ast} &= \sum_{i=1}^{m}\sum \big(a^{(1)}\,(D_{i}V)\,a^{(2)}\big)^{\ast}= \\
        &= \sum_{i=1}^{m}\sum (a^{(2)})^{\ast}\,(D_{i}V)\,(a^{(1)})^{\ast} = \\
        &= \sum_{i=1}^{m}\big(\partial_{i}(a^{\ast})\big)\tw D_{i}V\;. 
    \end{equs}

    \emph{Second-Order Term.} Define the antilinear involution $\sharp$ on $\Poly^{\otimes 3}$ by $(x \otimes y \otimes z)^{\sharp} \eqdef z^{\ast} \otimes y^{\ast} \otimes x^{\ast}$. We claim that
    \begin{equ}[eq:Delta_star] 
        \Delta(a^{\ast}) = (\Delta a)^{\sharp}\;.
    \end{equ} 
    Both sides are antilinear in $a$, so it suffices to verify \eqref{eq:Delta_star} on monomials $a = \indet^{w}$, $w \in \Words$. Write $\overleftarrow{w}$ for the reversal of $w$; since the indeterminates are self-adjoint, $(\indet^{w})^{\ast} = \indet^{\overleftarrow{w}}$. Recalling $\Delta = 2\sum_{i}(\partial_{i} \otimes \Id)\partial_{i}$, one has 
    \begin{equ} 
        \Delta \indet^{w} = 2 \sum_{k=1}^{m}\ \sum_{w = u\,k\,v\,k\,s} \indet^{u} \otimes \indet^{v} \otimes \indet^{s}\;, 
    \end{equ} 
    the inner sum running over factorizations of $w$ that single out two occurrences of the letter $k$. Reversal sends a factorization $w = u\,k\,v\,k\,s$ to $\overleftarrow{w} = \overleftarrow{s}\,k\,\overleftarrow{v}\,k\,\overleftarrow{u}$, and $(u,v,s) \mapsto (\overleftarrow{s},\overleftarrow{v},\overleftarrow{u})$ is a bijection between the corresponding index sets. Hence, 
    \begin{equ} 
        \Delta\big((\indet^{w})^{\ast}\big) = \Delta \indet^{\overleftarrow{w}} = 2 \sum_{k=1}^{m}\sum_{w = u\,k\,v\,k\,s} \indet^{\overleftarrow{s}} \otimes \indet^{\overleftarrow{v}} \otimes \indet^{\overleftarrow{u}} = \Big(\Delta \indet^{w}\Big)^{\sharp}\;, 
    \end{equ} 
    which is \eqref{eq:Delta_star}. Next, since $\omega$ is a state we have $\omega(b^{\ast}) = \overline{\omega(b)}$, so for any $x \otimes y \otimes z \in \Poly^{\otimes 3}$, 
    \begin{equ} 
        \bar{\eta}\big((x \otimes y \otimes z)^{\sharp}\big) = \omega(y^{\ast})\,z^{\ast}x^{\ast} = \overline{\omega(y)}\,(xz)^{\ast} = \big(\omega(y)\,xz\big)^{\ast} = \big(\bar{\eta}(x \otimes y \otimes z)\big)^{\ast} \; . 
    \end{equ} 
    As $\bar{\eta} \circ \sharp$ and $\ast \circ \bar{\eta}$ are both antilinear, this extends to $\bar{\eta}(W^{\sharp}) = (\bar{\eta}W)^{\ast}$ for all $W \in \Poly^{\otimes 3}$. Combined with \eqref{eq:Delta_star}, 
    \begin{equ} 
        \Delta_{\omega}(a^{\ast}) = \tfrac{1}{2} \bar{\eta}\,\Delta(a^{\ast}) = \tfrac{1}{2} \bar{\eta}\big((\Delta a)^{\sharp}\big) = \big( \tfrac{1}{2} \bar{\eta}\,\Delta a\big)^{\ast} = (\Delta_{\omega}a)^{\ast}\;. 
    \end{equ}
    Adding the two contributions and using \eqref{eq:generator}, 
    \begin{equs} 
        (\Lin_V a)^{\ast} &= -\Big(\sum_{i=1}^{m}(\partial_{i}a)\tw D_{i}V\Big)^{\ast} + 2(\Delta_{\omega}a)^{\ast} = \\
        &= -\sum_{i=1}^{m}\big(\partial_{i}(a^{\ast})\big)\tw D_{i}V + 2\Delta_{\omega}(a^{\ast}) = \Lin_V(a^{\ast})\;. 
    \end{equs} 
\end{proof}

\section{Noncommutative Gamma Calculus}\label{sec:nc_gamma}

\subsection{Opposite Algebras} Recall that an algebra $A$ is an underlying vector space (which we denote by $\underline{A}$) equipped with a multiplication map $\mcb{m} \colon \underline{A} \times \underline{A} \to \underline{A}$. Given such an algebra $A$, we can define the \emph{opposite algebra} $\Aop$ to be the same underlying vector space $\underline{A}$ but equipped with the (opposite) multiplication map $\mop \colon \underline{A} \times \underline{A} \to \underline{A}$ with 
\begin{equ} 
    \mop(a,b) \eqdef \mcb{m}(b,a) \;. 
\end{equ}

For any $*$-algebra $A$, we are also given a conjugate-linear involution $*\colon \underline{A} \to \underline{A}$. We then clearly have
\begin{equs} 
    \mop \circ (* \otimes *) = * \circ \mcb{m} \; . 
\end{equs}

We also remark that both $A$ and $\Aop$ are naturally both $A$- and $\Aop$-bimodules.

Let $\bone^{\mathrm{op}} \colon A \to \Aop$ be the identity map on the underlying vector space $\underline{A}$. This is not an algebra homomorphism, but it commutes with $*$ and is an $A$- and $\Aop$-bimodule morphism. We denote its inverse also by $\bone^{\mathrm{op}}$.

We also introduce the algebraic enveloping algebra $A^e = \Aop \otimes A$. The multiplication on $A^{e}$ is component-wise. Denoting the product on $A^{e}$ by $\cdot$, and adopting the notation convention where we sometimes suppress $\mcb{m}$, that is we write $\mcb{m}(a,b) = ab$, we then have 
\begin{equ} 
    (a\otimes b) \cdot (c \otimes d) = \mop(a,c) \otimes \mcb{m}(b,d) = ca \otimes bd \;. 
\end{equ}
$A^{e}$ is isomorphic to $A \otimes A$ as a vector space, but not as an algebra when $A \otimes A$ is equipped with its standard component-wise product. The product $\cdot$ is instead a type of ``insertion'' product.

Finally, we let $\iota^{e}\colon A^{e} \to A^{e}$ denote the involution given by 
\begin{equ} 
    \iota^{e}( a \otimes b ) \eqdef \bop(b) \otimes \bop(a) \; . 
\end{equ}

We view $A^{e}$ as an  $A$-bimodule by setting, for $a,b \in A$ and $c \otimes d \in A^{e} = A^{\op} \otimes A$, 
\begin{equ} 
    a \cdot (c \otimes d) \cdot b \eqdef ac \otimes db  \; . 
\end{equ} 
Note that with this $A$-action, $A^{e}$ is generically not an $A$-algebra. We also note that we can similarly make $(A^{\op})^{e} = (A^{\op})^{\op} \otimes A^{\op} = A \otimes A^{\op}$ an $A^{\op}$-bimodule and we use the notation $\cdot$ again for this action.

Specialising to $A=\Poly$ or $A=\pop$, we use $\partial_i$ for the free difference quotient on either algebra. With the noise normalisation in \eqref{eq:SDEs}, the raw free difference quotient already has the Dirichlet-form normalisation, so we set 
\begin{equ}[eq:raw_and_energy_gradient] 
    \nabla_i \eqdef (\bone^{\op}\otimes\bone)\partial_i\;,\qquad  \nabla\eqdef(\nabla_1,\dots,\nabla_m)\;. 
\end{equ} 
$\nabla$ is a derivation; in particular,
\begin{equ} 
    \nabla_i(ab) = (\nabla_i a)\cdot b + a\cdot(\nabla_i b)\;. 
\end{equ}
We extend $\Lin_V$ to $\pop$ by setting $ \Lin_V^{\op}   \eqdef\bop\circ\Lin_V\circ\bop$.

At the von Neumann algebra level, we define (by a slight abuse of notation)
\begin{equ}[eq:completed_enveloping_algebra] 
    \CM^e \eqdef \CM^{\op}\vnotimes\CM\;,\qquad \omega^e \eqdef \omega^{\op}\otimes\omega\;. 
\end{equ} 
The induced semigroup and generator are 
\begin{equ}[eq:tensor_semigroup] 
    P_t^e \eqdef P_t^{\op}\otimes P_t\;,\qquad \Lin_V^e \eqdef \Lin_V^{\op}\otimes\Id+\Id\otimes\Lin_V \; , 
\end{equ}
 where $P_t^{\op}(a^{\op})\eqdef P_t(a)^{\op}$. The spatial tensor product of normal completely positive maps makes $P_t^e$ normal, unital and completely positive. Moreover, the standing symmetry assumption gives 
 \begin{equ}[eq:tensor_semigroup_L2_symmetry] 
    \Braket{A,P_t^eB}_{\CL^2(\omega^e)} = \Braket{P_t^eA,B}_{\CL^2(\omega^e)} \; , 
\end{equ}
 first for simple tensors and then by density. Thus, $P_t^e$ is self-adjoint on $\CL^2(\omega^e)$. Furthermore, since $P_t^e$ is a contraction semigroup for all $p \in [1,\infty]$, \eqref{eq:tensor_semigroup_L2_symmetry} also implies that for all $A \in \CL^1(\omega^e)$, $B \in \CM^e$ and $t \geqslant 0$
 \begin{equ}[eq:tensor_semigroup_L1_symmetry]
    \omega^e( A P^e_t B ) = \omega^e(B P^e_t A) \; .
 \end{equ}

 Finally, by spectral calculus we have 
 \begin{equ}
    D(\Lin_V^e) = D(\Lin_V^{\op} \otimes \Id ) \cap D(\Id \otimes \Lin_V)\;.
 \end{equ}

\subsection{Noncommutative Carr\'{e} du Champ}\label{ss:nc_cdc}

Motivated by the commutative Definition~\ref{def:comm_gamma}, define, for $a,b\in\Poly$, 
\begin{equ}[eq:nc_gamma1] 
    \Gamma(a,b) \eqdef \frac{1}{2}\Big(\Lin_V(a^{\ast}b) - a^{\ast}\Lin_Vb - (\Lin_Va)^{\ast}b\Big)\;. 
\end{equ}
As usual, write $\Gamma(a)\eqdef\Gamma(a,a)$.

\begin{lemma}[Lifted Carr\'{e} du Champ]\label{lem:Gamma_properties} 
    For $a,b\in\Poly$, \begin{equ}[eq:Gamma_Laplacian] \Gamma(a,b) = \Delta_\omega(a^*b)-a^*\Delta_\omega b-(\Delta_\omega a)^*b\;. \end{equ} Moreover, if 
    \begin{equ}[eq:nc_gamma2] 
        \tilde\Gamma(a,b) \eqdef (\nabla a)^*\cdot\nabla b = \sum_{i=1}^m(\nabla_i a)^*\cdot\nabla_i b \in\Poly^e \; , 
    \end{equ}
    then 
    \begin{equ}[eq:gamma_lift] 
        \Gamma(a,b) = (\omega^{\op}\otimes\Id)\bigl(\tilde\Gamma(a,b)\bigr) \; . 
    \end{equ}
    In particular, $\tilde\Gamma(a)\geqslant0$ in $\Poly^e$ and $\Gamma(a)\geqslant0$ in $\Poly$. 
\end{lemma}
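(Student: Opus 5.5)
The plan is to verify the three assertions in turn, working on monomials and extending by (sesqui)linearity.

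\emph{The Laplacian formula.} Substitute \eqref{eq:generator} into \eqref{eq:nc_gamma1}. The first-order part of $\Lin_V$ is $a\mapsto -\sum_i(\partial_i a)\tw D_iV$, which is a derivation of $\Poly$: by the Leibniz rule \eqref{eq:Leibniz}, $(\partial_i(a^*b))\tw D_iV = ((\partial_i a^*)\tw D_iV)\,b + a^*\,((\partial_i b)\tw D_iV)$. By the first-order computation in Lemma~\ref{lem:generator_star_preserving}, $(\partial_i a^*)\tw D_iV = ((\partial_i a)\tw D_iV)^*$. So the drift contributions to $\Gamma(a,b)$ cancel. What remains is $\tfrac12\cdot 2\bigl(\Delta_\omega(a^*b)-a^*\Delta_\omega b-(\Delta_\omega a)^*b\bigr)$, using $(\Delta_\omega a)^*=\Delta_\omega(a^*)$ from the same lemma. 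This gives \eqref{eq:Gamma_Laplacian}.

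\emph{The slice identity \eqref{eq:gamma_lift}.} Fix $i$ and apply the Leibniz rule twice to $(\partial_i\otimes\Id)\partial_i(xy)$ with $x=a^*$ and $y=b$:
\[
(\partial_i\otimes\Id)\partial_i(xy)=\bigl((\partial_i\otimes\Id)\partial_i x\bigr)\,y + x\,\bigl((\partial_i\otimes\Id)\partial_i y\bigr) + \text{cross terms}.
\]
If $\partial_i x=\sum x'\otimes x''$ and $\partial_i y=\sum y'\otimes y''$, the cross terms are $\sum x'\otimes x''y'\otimes y''$. The two non-cross terms cancel against $x\Delta_\omega y$ and $(\Delta_\omega x)y$. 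Applying $\bar\eta$ to the cross terms gives $\sum\omega(x''y')\,x'y''$, so
\[
\Gamma(a,b)=\sum_i\sum \omega(x''y')\,x'y''.
\]
By \eqref{eq:partial_star}, writing $\partial_i a=\sum a'\otimes a''$, we have $x'=(a'')^*$ and $x''=(a')^*$. On the other side, $(\nabla_i a)^*\cdot\nabla_i b=\sum ((a')^*)^{\op}\cdot (b')^{\op}\otimes (a'')^*b''$, and in the opposite algebra this product is $b'(a')^*$. Slicing with $\omega^{\op}$ gives $\sum\omega(b'(a')^*)\,(a'')^*b''$. By traciality, $\omega(b'(a')^*)=\omega((a')^*b')=\omega(x''y')$, and $(a'')^*b''=x'y''$, so the two expressions agree. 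The main risk of error is this bookkeeping of the opposite product and the swap in \eqref{eq:partial_star}. I would double-check it on the monomial case $a=b=\indet^i$, where both sides should equal $\bone$.

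\emph{Positivity.} The element $\tilde\Gamma(a)=\sum_i(\nabla_i a)^*\cdot\nabla_i a$ is a sum of elements of the form $z^*z$ in the $*$-algebra $\Poly^e\subset\CM^e$, hence positive. Then $\Gamma(a)$ is the image of $\tilde\Gamma(a)$ under the slice map $\omega^{\op}\otimes\Id$. This map is a (completely) positive conditional-expectation-type map, being the tensor product of a state with the identity, so $\Gamma(a)\geqslant0$ in $\CM$. I expect no obstacle here beyond noting that the slice extends to $\CM^e$.
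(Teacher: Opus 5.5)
Your proposal is correct and takes the same route as the paper. The drift cancels because it is a $*$-compatible derivation. A double Leibniz expansion of $(\partial_i\otimes\Id)\partial_i(a^*b)$ leaves only the cross term, and traciality identifies $\bar\eta$ of that cross term with the $\omega^{\op}$-slice of $(\nabla_i a)^*\cdot\nabla_i b$. Positivity then comes from the $z^*z$ form together with positivity of the slice map.

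Your explicit use of the flip in \eqref{eq:partial_star}, which yields $\sum\omega\bigl((a')^*b'\bigr)\,(a'')^*b''$, is more careful than the paper's displayed intermediate term. That term interchanges the two tensor legs of $\partial_i f$, although the final identity \eqref{eq:gamma_lift} is unaffected. Your proposed check $a=b=\indet^i$ is too degenerate to detect such a swap, whereas $a=b=\indet^1\indet^2$ (for $m\geqslant 2$) would detect it.
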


\begin{proof} 
    The drift in \eqref{eq:generator} is a derivation and therefore cancels from \eqref{eq:nc_gamma1}, proving \eqref{eq:Gamma_Laplacian}. Since $\Delta=2\sum_i(\partial_i\otimes\Id)\partial_i$, two applications of the Leibniz rule give 
    \begin{equ}[eq:Delta_product_difference] 
        \Delta(f^*g)-f^*\Delta g-\Delta(f^*)g = 2\sum_{i=1}^m(\partial_i f^*)\amalg(\partial_i g)\;. 
    \end{equ}
     If $\partial_i f=\sum_k a_k\otimes b_k$ and $\partial_i g=\sum_l c_l\otimes d_l$, then traciality gives 
     \begin{equ}
         \sum_{k,l}\bar\eta(a_k^*\otimes b_k^*c_l\otimes d_l) = \sum_{k,l}\omega(b_k^*c_l)a_k^*d_l = (\omega^{\op}\otimes\Id)\bigl((\nabla_i f)^*\cdot\nabla_i g\bigr)\;. 
    \end{equ} 
    Combining this identity with \eqref{eq:Gamma_Laplacian} and \eqref{eq:Delta_product_difference} proves \eqref{eq:gamma_lift}. Positivity follows from \eqref{eq:nc_gamma2} and positivity of the slice map $\omega^{\op}\otimes\Id$.
 \end{proof}

\begin{proposition}[Closed Dirichlet Form]\label{prop:DirichletForm} 
    On $\Poly$, define the Dirichlet form 
    \begin{equ}
        \CE_0(a,b) \eqdef \omega\bigl(\Gamma(a,b)\bigr)\;. 
    \end{equ}
    Then 
    \begin{equ}[eq:energy_gradient_identity] 
        \CE_0(a,b) = \omega^e\bigl(\tilde\Gamma(a,b)\bigr) = \Braket{\nabla a,\nabla b}_{\CL^2(\omega^e)^m} = -\omega(a^*\Lin_Vb)\;. 
    \end{equ}
    Consequently, $\nabla\colon\Poly\to\CL^2(\omega^e)^m$ is closable. If $\overline\nabla$ denotes its closure, then the closed Dirichlet form $\CE$ associated with $-\Lin_V$ is 
    \begin{equ}[eq:closed_gradient_form] 
        \CE(a,b)=\Braket{\overline\nabla a,\overline\nabla b}_{\CL^2(\omega^e)^m}\;,
    \end{equ}
    with $D(\CE)=D\bigl((-\Lin_V)^{1/2}\bigr)=D(\overline\nabla)$, and 
    \begin{equ}[eq:generator_gradient_factorisation] 
        -\Lin_V = \overline\nabla^{\,*}\overline\nabla \; .
    \end{equ} 
    Furthermore, $\CE$ satisfies for all $a \in D(\CE)_+$ and $n \geqslant 0$
    \begin{equ}[eq:DirProp]
        \CE(a \wedge n) \leqslant \CE(a) \; . 
    \end{equ}
    Finally, $P_t D(\CE) \subset D(\CE)$. 
\end{proposition}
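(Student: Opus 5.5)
We first establish the chain of identities \eqref{eq:energy_gradient_identity} on $\Poly$, and then obtain everything else from standard closed-form theory. By Lemma~\ref{lem:Gamma_properties}, $\Gamma(a,b)=(\omega^{\op}\otimes\Id)(\tilde\Gamma(a,b))$. Applying $\omega$ gives $\CE_0(a,b)=\omega^e(\tilde\Gamma(a,b))=\sum_i\omega^e((\nabla_ia)^*\cdot\nabla_ib)$, which is the $\CL^2(\omega^e)^m$ inner product. For the last equality we apply $\omega$ to the definition \eqref{eq:nc_gamma1}. Invariance gives $\omega(\Lin_V(a^*b))=0$, since $\omega(P_tc)=\omega(c)$ can be differentiated at $t=0$ on the polynomial core. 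Hence $\CE_0(a,b)=-\tfrac12\bigl(\omega(a^*\Lin_Vb)+\omega((\Lin_Va)^*b)\bigr)$. By the standing symmetry Assumption~\ref{ass:standing_symmetry}, $\Lin_V$ is symmetric on $\Poly\subset D(\Lin_V)$, and together with Lemma~\ref{lem:generator_star_preserving} this gives $\omega((\Lin_Va)^*b)=\langle\Lin_Va,b\rangle=\langle a,\Lin_Vb\rangle=\omega(a^*\Lin_Vb)$. This proves \eqref{eq:energy_gradient_identity}. We also need that the pulled-back formula \eqref{eq:generator} coincides with the $\CL^2$ generator on $\Poly$. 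This follows from the free It\^o formula: the martingale term has vanishing conditional expectation, and the drift is norm continuous.

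For closability, let $a_n\in\Poly$ with $a_n\to0$ in $\CL^2(\omega)$ and $\nabla a_n\to\xi$ in $\CL^2(\omega^e)^m$. For every $b\in\Poly$ we have $\langle\xi,\nabla b\rangle=\lim\langle\nabla a_n,\nabla b\rangle=\lim-\langle a_n,\Lin_Vb\rangle=0$. So we still need $\xi\perp\nabla\Poly$, and here we do not argue via density of the range of $\nabla$ in $\CL^2(\omega^e)^m$, which would be false. Instead we use the standard trick $\|\xi\|^2=\lim_n\langle\xi,\nabla a_n\rangle=0$, since each $\nabla a_n\in\nabla\Poly$. Hence $\nabla$ is closable.

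The form $\CE(a)=\|\overline\nabla a\|^2$ on $D(\overline\nabla)$ is then closed. It coincides with the form of the self-adjoint operator $-\Lin_V$ because $\Poly$ is a core for $(-\Lin_V)^{1/2}$ (Remark~\ref{rem:DomainAss}, with $\alpha=1/2$) and $\|\nabla a\|^2=\|(-\Lin_V)^{1/2}a\|^2$ on $\Poly$. Both closures therefore have the same domain and values, so $D(\CE)=D((-\Lin_V)^{1/2})=D(\overline\nabla)$. Kato's representation theorem (equivalently, the polar decomposition of $\overline\nabla$) then gives $-\Lin_V=\overline\nabla^*\overline\nabla$. Invariance of $D(\CE)$ under $P_t$ is immediate from the spectral theorem, since $P_t=e^{t\Lin_V}$ commutes with $(-\Lin_V)^{1/2}$ and $\CE(P_ta)\leqslant\CE(a)$.

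The main obstacle is the Markov (Dirichlet) property \eqref{eq:DirProp}. We would derive it from positivity of the semigroup rather than by computing on the truncation $a\wedge n$ directly. Since $P_t$ is completely positive, unital, $\omega$-symmetric and an $\CL^\infty$ contraction (Proposition~\ref{prop:basic_cont}), it is a symmetric sub-Markovian semigroup on $\CL^2(\omega)$. For such semigroups the noncommutative Beurling--Deny criterion (Albeverio--H{\o}egh-Krohn, Davies--Lindsay, Cipriani) states that the associated form is a Dirichlet form. Concretely, for self-adjoint $a\in D(\CE)$, the map $a\mapsto a\wedge n$ is the projection onto the closed convex set $\{b=b^*: b\leqslant n\}$, and this set is invariant under $P_t$. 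Then $\CE(a\wedge n)\leqslant\CE(a)$ follows via the approximation $\CE(a)=\lim_{t\to0}t^{-1}\langle a-P_ta,a\rangle$. To conclude the inequality, we combine Ouhabaz-type invariance of convex sets with the fact that the projection onto the set reduces the approximating forms.
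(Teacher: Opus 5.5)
Your proposal is correct and follows essentially the same route as the paper. You use Lemma~\ref{lem:Gamma_properties} together with invariance and symmetry for \eqref{eq:energy_gradient_identity}, the form-core property of $\Poly$ to identify the closed form with that of $-\Lin_V$, the noncommutative Beurling--Deny theorem (Albeverio--H{\o}egh-Krohn) for \eqref{eq:DirProp}, and spectral calculus for $P_t D(\CE)\subset D(\CE)$. The differences are cosmetic: you prove closability of $\nabla$ directly rather than reading it off from the form closure, and you sketch the Markov property via the Ouhabaz convex-set criterion, with $a\wedge n$ as the $\CL^2(\omega)$-projection onto $\{b=b^*\colon b\leqslant n\}$, where the paper simply cites the result.
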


\begin{proof} 
    The first two equalities in \eqref{eq:energy_gradient_identity} follow from Lemma~\ref{lem:Gamma_properties}. Invariance gives $\omega(\Lin_V(a^*b))=0$, while symmetry gives $\omega((\Lin_Va)^*b)=\omega(a^*\Lin_Vb)$. Applying these identities to \eqref{eq:nc_gamma1} proves the last equality. 
    
    Since $-\Lin_V$ is non-negative and self-adjoint, the quadratic form it defines is closed on $D((-\Lin_V)^{1/2})$. The operator-core assumption implies that $\Poly$ is also a form core. Hence the closure of $\CE_0$ is this form, which proves closability, and \eqref{eq:generator_gradient_factorisation}. 

    Since $P_t$ is a symmetric completely positive Markov semigroup, its associated form $\CE$ is completely Dirichlet. This proves \eqref{eq:DirProp}; see \cite[Theorems~3.2,~4.2]{AlbeverioHoegh-Krohn1977a}.

    Finally, by functional calculus we have that $P_t D\bigl((-\Lin_V)^{1/2}\bigr) \subset D\bigl((-\Lin_V)^{1/2}\bigr)$ which proves the last assertion.
\end{proof}

\begin{remark}\label{rem:twochoices}
     In the commutative case, the expressions \eqref{eq:nc_gamma1} and \eqref{eq:nc_gamma2} coincide. In the noncommutative setting, they differ: \eqref{eq:nc_gamma1} takes values in $\Poly$ while the ``lifted'' $\tilde{\Gamma}$ in \eqref{eq:nc_gamma2} takes values in $\Poly^{\otimes 2}$.

    As a consequence, when we turn to second-order objects such as the iterated carr\'{e} du champ, there will be both ``unlifted and lifted'' versions that are related by applying $\omega$ to one factor. 
\end{remark}

Proposition~\ref{prop:DirichletForm} now allows us to extend the quadratic carr\'e du champ maps. In particular, we obtain
\begin{equ}
    \widetilde{\Gamma} \colon D(\CE)  \longrightarrow \CL^1(\omega^e)_+  \; , \qquad \Gamma \colon D(\CE) \longrightarrow \CL^1(\omega)_+ \; .  
\end{equ}
Their polarisations are continuous bilinear forms with respect to the graph norm.

\begin{remark}
    By graph norm continuity and the density of $\Poly$ in the domains w.r.t.\ this norm it follows that it is sufficient to check the gradient estimates only on $\Poly$.
\end{remark}

\subsection{Two Formulations of the Gradient Estimate}\label{ss:nc_gradient_est}

Our next goal is to identify an analogue of the $\Gamma_{2}$ condition in the free probability setting.
Recall that in the commutative case, the Bakry--\'{E}mery condition
$\Gamma_{2}(f) \geqslant K\,\Gamma(f)$ is equivalent to the gradient estimate: for all $t \geqslant 0$,
\begin{equ}
    \Gamma(P_{t}f) = |\nabla(P_{t}f)|^{2} \leqslant e^{-2Kt}\, P_{t}\|\nabla f\|^{2} = e^{-2Kt}\, P_{t}\Gamma(f)\;.
\end{equ}
In the noncommutative case, the first and third equalities above do not hold (see Remark~\ref{rem:twochoices}), and we obtain two distinct formulations of the gradient estimate.

\begin{definition}[Weak and Strong Gradient Estimates]\label{def:NCGD} 
    Let $K > 0$. We say that the semigroup $(P_t)_t$ satisfies the \emph{weak (noncommutative) gradient estimate} $\mathrm{GE}_{\mathrm w}(K,\infty)$ if 
    \begin{equ}[eq:gradientestimate1]
        \Gamma(P_ta) \leqslant e^{-2Kt}P_t\Gamma(a) \; , 
    \end{equ} 
    in  $\CL^1(\omega)_+$, for $t \geqslant 0$ and every $a \in D(\overline{\nabla})$. It satisfies the \emph{strong gradient estimate} $\mathrm{GE}_{\mathrm s}(K,\infty)$ if 
    \begin{equ}[eq:gradientestimate2] 
        \tilde\Gamma(P_ta) \leqslant e^{-2Kt}P_t^e\bigl(\tilde\Gamma(a)\bigr) 
    \end{equ} 
    in $\CL^1(\omega^e)_+$. 
\end{definition}

The strong estimate implies the weak one. Indeed, applying the normal positive slice map $\omega^{\op}\vnotimes\Id$ and using \eqref{eq:standing_invariance} gives 
\begin{equ} 
    (\omega^{\op}\vnotimes\Id)P_t^e = P_t(\omega^{\op}\vnotimes\Id)\;, 
\end{equ}
and the conclusion follows from \eqref{eq:gamma_lift}.

\subsection{Auxiliary Operators}\label{ss:auxiliary_ops}

Given a linear map $A\colon\Poly\to\Poly$, define its enveloping-algebra lift by 
\begin{equ} 
    A^e \eqdef A^{\op}\otimes\Id+\Id\otimes A\;. 
\end{equ}
The two iterated carr\'{e} du champ operators are obtained by differentiating 
\begin{equ} 
    \Psi_K(s) \eqdef e^{-2Ks}P_s\Gamma(P_{t-s}f)\;,\qquad \tilde\Psi_K(s) \eqdef e^{-2Ks}P_s^e\tilde\Gamma(P_{t-s}f)\;. 
\end{equ} 
Under suitable core regularity, non-negativity of their derivatives yields \eqref{eq:gradientestimate1} and \eqref{eq:gradientestimate2}, respectively. 

\subsection{Product Formulae and Hessians}\label{ss:product_formulae}

We first record the product formulae for $\Lin_V$ and $\Lin_V^\op$ acting on products of polynomials. For $f, g \in \Poly$, 
\begin{equ}[eq:productL] 
    \Lin_V(f^{\ast}g) = (\Lin_Vf)^{\ast}g + f^{\ast}\Lin_Vg + 2(\omega^{\op}\otimes\Id)\bigl((\nabla f)^{\ast}\cdot\nabla g\bigr)\;. 
\end{equ}
An analogous formula also holds for the opposite algebra $\pop$. For the sake of clarity, we will distinguish in this derivation the $\Lin_V^{\op} \colon \pop \to \pop$ from $\Lin_V \colon \Poly \to \Poly$ although the two maps are the same on the underlying vector space. For $f^\op,g^\op \in \pop$, and writing $\cdot_{\op}$ for the product $\mcb{m}^{\op}$ in $\pop$, we have 
\begin{equs}[eq:productLop] 
    \Lin_V^{\op}\left( (f^\op)^* \cdot_{\op} g^\op\right) &= \bop\left( \Lin_V( g f^*) \right) = \\
    & = \bop\left( (\Lin_V g) f^* + g (\Lin_V f)^* + 2(\omega \otimes \Id_{\Poly}) \left[ \nabla g \cdot (\nabla f)^*\right] \right) = \\
    & =(f^\op)^* \cdot_{\op} (\Lin_V^\op g^\op) + (\Lin_V^\op f^\op)^* \cdot_{\op} g^\op +\\
    & \qquad \qquad\qquad  + 2(\omega \otimes \Id_{\pop}) \left[ (\nabla f^\op)^* \cdot_{\op} (\nabla g^\op) \right]\;. 
\end{equs} 
Here the second equality uses the product formula \eqref{eq:productL} together with Lemma~\ref{lem:generator_star_preserving}.

In terms of their domains, the product formula directly leads to the following corollary.
\begin{corollary}\label{cor:ProdDomain}
    Multiplication is a continuous map $D( \Lin_V )\times D( \Lin_V ) \to D( (\Lin_V)_1 )$ in terms of their respective graph norms. 
\end{corollary}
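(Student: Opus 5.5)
The plan is to prove the statement first on the polynomial core $\Poly$ and then extend it by density, using the operator-core assumption in Assumption~\ref{ass:standing_symmetry}. Since $\Poly$ is dense in $D((-\Lin_V)^{\alpha})$ for $\alpha\leqslant 3/2$ (Remark~\ref{rem:DomainAss}), it is in particular a core for $\Lin_V$. So I will fix $f,g\in D(\Lin_V)$ and choose polynomial sequences $f_n\to f$, $g_n\to g$ in graph norm, i.e.\ $f_n\to f$ and $\Lin_V f_n\to\Lin_V f$ in $\CL^2(\omega)$.

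For polynomials the product formula \eqref{eq:productL}, applied to $f_n^*$ in place of $f$ and combined with Lemma~\ref{lem:generator_star_preserving}, gives
\begin{equ}
\Lin_V(f_ng_n)=(\Lin_Vf_n)g_n+f_n\Lin_Vg_n+2\Gamma(f_n^*,g_n)\;.
\end{equ}
I will estimate each term in $\CL^1(\omega)$. By noncommutative H\"older, $\|(\Lin_Vf_n)g_n\|_1\leqslant\|\Lin_Vf_n\|_2\|g_n\|_2$, and similarly for the second term; also $\|f_ng_n\|_1\leqslant\|f_n\|_2\|g_n\|_2$. For the carr\'e du champ term, Lemma~\ref{lem:Gamma_properties} writes it as $(\omega^{\op}\otimes\Id)\bigl((\nabla f_n^*)^*\cdot\nabla g_n\bigr)$. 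The slice map is an $\CL^1$ contraction, so Cauchy--Schwarz in $\CL^2(\omega^e)^m$ gives
\begin{equ}
\|\Gamma(f_n^*,g_n)\|_1\leqslant\|\nabla f_n^*\|_{2}\|\nabla g_n\|_{2}=\CE(f_n^*)^{1/2}\CE(g_n)^{1/2}\;.
\end{equ}
By \eqref{eq:energy_gradient_identity}, $\CE(h)=-\omega(h^*\Lin_Vh)\leqslant\|h\|_2\|\Lin_Vh\|_2$. This quantity is dominated by the graph norm, and it is invariant under $h\mapsto h^*$ by Lemma~\ref{lem:generator_star_preserving} and traciality. The same bounds applied to the bilinear differences $f_ng_n-f_kg_k$ show that $f_ng_n$ and $\Lin_V(f_ng_n)$ are Cauchy in $\CL^1(\omega)$, with limits $fg$ and some $h$.

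To identify $h$, I will use that $(\Lin_V)_1$ is closed, being the generator of a strongly continuous $\CL^1$ semigroup (Proposition~\ref{prop:basic_cont}). One point needs checking here: $\Lin_V$ on $\Poly$ agrees with $(\Lin_V)_1$. This holds because the $\CL^2$ and $\CL^1$ semigroups are consistent and, on $\Poly$, the polynomial formula \eqref{eq:generator} gives the $\CL^2$ derivative, hence also the $\CL^1$ derivative. Closedness then yields $fg\in D((\Lin_V)_1)$ and $(\Lin_V)_1(fg)=h$, together with the bilinear bound
\begin{equ}
\|fg\|_{D((\Lin_V)_1)}\lesssim\|f\|_{D(\Lin_V)}\|g\|_{D(\Lin_V)}\;.
\end{equ}
Continuity of a bounded bilinear map follows from this bound.

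I expect the main obstacle to be the careful handling of the $\Gamma$ term. This means controlling it purely through $\CE$ and making sure the limiting cross term is well defined, which the polarised Cauchy--Schwarz bound above handles. The identification of the $\CL^1$ generator with the polynomial formula on the core is a secondary technical point.
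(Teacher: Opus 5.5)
Your proposal is correct and follows the paper's route: you use the product formula \eqref{eq:productL}, apply H\"older to the two first-order terms, and control the carr\'e du champ term through the inclusion $D(\Lin_V)\subset D(\overline\nabla)$, which you quantify as $\CE(h)\leqslant\|h\|_2\|\Lin_V h\|_2$, together with Cauchy--Schwarz. The paper leaves implicit the approximation from $\Poly$ and the closedness of $(\Lin_V)_1$, and your density argument supplies that step explicitly.
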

\begin{proof}
    This follows immediately for the first two terms on the right-hand side of \eqref{eq:productLop} by H\"older's inequality. The inclusion $D(\Lin_V)\subset D(\overline{\nabla})=D((-\Lin_V)^{1/2})$ and another application of H\"older's inequality handle the last term.
\end{proof}

\begin{lemma}\label{lem:product_formulae} 
    For $a \otimes b, c \otimes d \in \CP^{e}$, 
    \begin{equs}[eq:product_IdL] 
        (\Id_{\pop} \otimes \Lin_V)\big[(a \otimes b)^{\ast} \cdot (c \otimes d)\big] &= \big[(\Id_{\pop} \otimes \Lin_V)(a \otimes b)^{\ast}\big] \cdot (c \otimes d) + \\
        &\qquad + (a \otimes b)^{\ast} \cdot \big[(\Id_{\pop} \otimes \Lin_V)(c \otimes d)\big]+\\
        &\qquad + 2(\Id_{\pop} \otimes \omega^{\op} \otimes \Id_{\Poly})\big[ca^{\ast} \otimes \big((\nabla b)^{\ast} \cdot \nabla d\big)\big]\;. 
    \end{equs} 
    Furthermore, 
    \begin{equs}[eq:product_L2] 
        \Lin_V^e\big[(a \otimes b)^{\ast} \cdot (c \otimes d)\big] &= \big[\Lin_V^e(a \otimes b)^{\ast}\big] \cdot (c \otimes d) + (a \otimes b)^{\ast} \cdot \big[\Lin_V^e(c \otimes d)\big]+\\
        &\qquad + 2(\omega^{\op} \otimes \Id_{\pop} \otimes \Id_{\Poly})\big[\big((\nabla a)^{\ast} \cdot \nabla c\big) \otimes b^{\ast}d\big]+\\
        &\qquad + 2(\Id_{ \pop } \otimes \omega^{\op} \otimes \Id_{\Poly})\big[ca^{\ast} \otimes \big((\nabla b)^{\ast} \cdot \nabla d\big)\big]\;. 
    \end{equs} 
\end{lemma}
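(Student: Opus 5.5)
The plan is to compute directly on simple tensors, using the enveloping-algebra product rule together with the product formulae \eqref{eq:productL} and \eqref{eq:productLop}.

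First I make the product explicit. Since $(a\otimes b)^{\ast}=a^{\ast}\otimes b^{\ast}$ and the product on $\CP^e$ is $(x\otimes y)\cdot(z\otimes w)=zx\otimes yw$, we have
\begin{equ}
(a\otimes b)^{\ast}\cdot(c\otimes d)=ca^{\ast}\otimes b^{\ast}d\;.
\end{equ}
Applying $\Id_{\pop}\otimes\Lin_V$ gives $ca^{\ast}\otimes\Lin_V(b^{\ast}d)$. I then expand $\Lin_V(b^{\ast}d)$ with \eqref{eq:productL}, which gives three terms.

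\emph{First term.} $(\Lin_V b)^{\ast}d$ yields $ca^{\ast}\otimes(\Lin_Vb)^{\ast}d$. By Lemma~\ref{lem:generator_star_preserving}, $(\Id\otimes\Lin_V)(a\otimes b)^{\ast}=a^{\ast}\otimes\Lin_V(b^{\ast})=a^{\ast}\otimes(\Lin_Vb)^{\ast}$, so this term is exactly $[(\Id\otimes\Lin_V)(a\otimes b)^{\ast}]\cdot(c\otimes d)$.

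\emph{Second term.} Likewise $b^{\ast}\Lin_Vd$ yields $(a\otimes b)^{\ast}\cdot[(\Id\otimes\Lin_V)(c\otimes d)]$.

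\emph{Third term.} $2(\omega^{\op}\otimes\Id)((\nabla b)^{\ast}\cdot\nabla d)$ yields $2\,ca^{\ast}\otimes(\omega^{\op}\otimes\Id)(\cdots)$. This is the asserted slice $(\Id_{\pop}\otimes\omega^{\op}\otimes\Id_{\Poly})[ca^{\ast}\otimes((\nabla b)^{\ast}\cdot\nabla d)]$, which proves \eqref{eq:product_IdL}.

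For \eqref{eq:product_L2}, I write $\Lin_V^e=\Lin_V^{\op}\otimes\Id+\Id\otimes\Lin_V$. The second summand has just been handled. For the first, $(\Lin_V^{\op}\otimes\Id)$ acts on $ca^{\ast}\otimes b^{\ast}d$, where $ca^{\ast}$ is, in $\pop$, the product $(a^{\op})^{\ast}\cdot_{\op}c^{\op}$. I then apply \eqref{eq:productLop}, with $f^{\op}=a$ and $g^{\op}=c$. This gives:
\begin{itemize}
\item $(a^{\ast}\cdot_{\op}\Lin_V^{\op}c)\otimes b^{\ast}d$, which is $(a\otimes b)^{\ast}\cdot[(\Lin_V^{\op}\otimes\Id)(c\otimes d)]$;
\item $((\Lin_V^{\op}a)^{\ast}\cdot_{\op}c)\otimes b^{\ast}d$, which is $[(\Lin_V^{\op}\otimes\Id)(a\otimes b)^{\ast}]\cdot(c\otimes d)$ (using star-preservation once more);
\item $2(\omega\otimes\Id_{\pop})[(\nabla a)^{\ast}\cdot_{\op}\nabla c]\otimes b^{\ast}d$, which is the term $2(\omega^{\op}\otimes\Id_{\pop}\otimes\Id_{\Poly})[((\nabla a)^{\ast}\cdot\nabla c)\otimes b^{\ast}d]$.
\end{itemize}
Summing the two summands' contributions and regrouping the $\Lin_V^{\op}\otimes\Id$ and $\Id\otimes\Lin_V$ pieces into $\Lin_V^e$ gives \eqref{eq:product_L2}.

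The main obstacle is bookkeeping in the opposite algebra. One must check that the gradient of an element of $\pop$ lives in $(\pop)^e=\Poly\otimes\pop$ with the right product, so that the slice $\omega\otimes\Id_{\pop}$ in \eqref{eq:productLop} is correctly rewritten as $\omega^{\op}$ acting on the first leg of a three-fold tensor. One must also verify that the orders of $a^{\ast}$ and $c$ match the $\cdot$ product. I would settle this by checking monomials $a,c$ of low degree.
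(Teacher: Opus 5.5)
Your proposal is correct and follows the paper's proof. Both write $(a\otimes b)^*\cdot(c\otimes d)=ca^*\otimes b^*d$, expand $\Lin_V(b^*d)$ via \eqref{eq:productL} together with Lemma~\ref{lem:generator_star_preserving}, and obtain \eqref{eq:product_L2} by adding the $\Lin_V^{\op}\otimes\Id$ contribution. Your explicit use of \eqref{eq:productLop} with $f^{\op}=a$, $g^{\op}=c$ spells out the step the paper leaves to ``adding the contributions''. The opposite-algebra bookkeeping you flag also holds in general, not only on low-degree monomials: on the underlying space, the free difference quotient of $\pop$ is $\iota\circ\partial_i$, so the slice of $(\nabla a)^*\cdot\nabla c$ in $(\pop)^e$ is exactly the carr\'e du champ term of $\Lin_V(ca^*)$.
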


\begin{remark} 
    We note that in \eqref{eq:product_L2} $\nabla_i$ is viewed both as a map $\Poly \to \CP^{e}$ in the last line and $\pop \to (\pop)^e$ in the penultimate line.
 \end{remark}

\begin{proof}
    Both identities follow from \eqref{eq:productL}. For \eqref{eq:product_IdL}, observe that
    \begin{equs}
        (\Id_{\pop} \otimes \Lin_V)\big[(a \otimes b)^{\ast} \cdot (c \otimes d)\big] &= (\Id_{\pop} \otimes \Lin_V)\big[ca^* \otimes b^*d \big] =\\
        &=ca^{\ast} \otimes \Lin_V(b^{\ast}d) =  \\
        &=ca^* \otimes \left( (\Lin_V b)^* d + b^* \Lin_V d + 2(\omega^{\op} \otimes \bone)\left[ (\nabla b)^* \cdot (\nabla d) \right] \right) = \\
        &=\left(a \otimes \Lin_V b \right)^* \cdot (c \otimes d) + (a \otimes b)^* \cdot (c \otimes \Lin_V d) + \\
        & \qquad + 2( \bone_{ \pop } \otimes \omega^{\op} \otimes  \bone_{\Poly}) \left[ ca^* \otimes \left( (\nabla b)^* \cdot \nabla d \right) \right]\;,
    \end{equs}
    and apply \eqref{eq:productL} to $\Lin_V(b^{\ast}d)$, using Lemma~\ref{lem:generator_star_preserving}.

    Identity \eqref{eq:product_L2} follows by adding the $(\Lin_V^{\op}\otimes\Id)$ and $(\Id\otimes\Lin_V)$ contributions.
\end{proof}

We now introduce some additional notation along with the formulae for the noncommutative Hessians that appear in our analysis.

Given an algebra $\CA$, we write $\mathrm{M}_{n}(\CA)$ for the algebra of $n \times n$ matrices with elements in $\CA$. We write $\Tr \colon \mathrm{M}_{n}(\mathcal{A}) \mapsto \mathcal{A}$ for the algebra valued trace, that is $A = (a_{ij})_{i,j=1}^{n} \mapsto \sum_{i=1}^{n} a_{ii}$. We also write $\mathrm{M}_{n}(\CA) \ni A,B \mapsto AB \in \mathrm{M}_{n}(\CA)$ for the standard matrix product.

If $\mathcal{A}$ is a $*$-algebra, we define a corresponding conjugate-transpose involution $A = (a_{ij})_{i,j =1}^{n} \mapsto (a^{*}_{ji})_{i,j=1}^{n} = A^{*}$ on $\mathrm{M}_{n}(\CA)$.

\begin{definition}[Noncommutative Hessians]\label{def:nc_hessian}
    For $f \in \Poly$, define
    \begin{equs}
        \mathrm{Hess}^{\mathfrak{l}}(f) &= \big(\mathrm{Hess}_{ij}^{\mathfrak{l}}(f)\big)_{i,j=1}^{m}\in \mathrm{M}_{m}(\Poly\otimes\pop\otimes\Poly) \;, \\
    \mathrm{Hess}^{\mathfrak{r}}(f) &= \big(\mathrm{Hess}^{\mathfrak{r}}_{ij}(f) \big)_{i,j=1}^{m} \in \mathrm{M}_{m}(\pop\otimes\pop\otimes\Poly)\;,
    \end{equs}
    by setting, for $i,j \in [m]$
    \begin{equ}
        \mathrm{Hess}_{ij}^{\mathfrak{l}}(f) = (\nabla_{i} \otimes \Id)\nabla_{j}f\;, \qquad \mathrm{Hess}_{ij}^{\mathfrak{r}}(f) = (\Id \otimes \nabla_{i})\nabla_{j}f\;.
    \end{equ}
\end{definition}

Given $f, g \in \Poly$, we define $\mathcal{H}^{\mathfrak{l}}(f,g), \mathcal{H}^{\mathfrak{r}}(f,g), \mathcal{H}(f,g) \in \Poly^{\otimes 2}$ by setting
\begin{equs}
    \mathcal{H}^{\mathfrak{l}}(f,g) &= (\omega \otimes \Id_{\pop} \otimes \Id_{\Poly})\Tr\Big(\mathrm{Hess}^{\mathfrak{l}}(f)^{\ast}  \mathrm{Hess}^{\mathfrak{l}}(g)\Big)\;,\\
\mathcal{H}^{\mathfrak{r}}(f,g) &= (\Id_{\pop} \otimes \omega \otimes \Id_{\Poly})\Tr\Big(\mathrm{Hess}^{\mathfrak{r}}(f)^{\ast}  \mathrm{Hess}^{\mathfrak{r}}(g)\Big)\;,\\
\mathcal{H}(f,g) &= \mathcal{H}^{\mathfrak{l}}(f,g) + \mathcal{H}^{\mathfrak{r}}(f,g)\;.
\end{equs}
On the right-hand sides of the first two formulae above, we are using the matrix product. Note that $\CH^{\mfl}(f,f) \geqslant 0$ and $\CH^{\mfr}(f,f) \geqslant 0$.

\begin{lemma}\label{lem:hessian_product}
    For $f, g \in \Poly$,
\begin{equ}\label{eq:hessian_product_IdL} (\Id \otimes \Lin_V)\Big[(\nabla f)^{\ast} \cdot \nabla g\Big] = \big[(\Id \otimes \Lin_V)(\nabla f)^{\ast}\big] \cdot \nabla g + (\nabla f)^{\ast} \cdot \big[(\Id \otimes \Lin_V)\nabla g\big] + 2\mathcal{H}^{\mathfrak{r}}(f,g)\;,
\end{equ}
\begin{equs}[eq:hessian_product_L2]
    \Lin_V^e\Big[(\nabla f)^{\ast} \cdot \nabla g\Big] &= \big[\Lin_V^e(\nabla f)^{\ast}\big] \cdot \nabla g + (\nabla f)^{\ast} \cdot \big[\Lin_V^e\nabla g\big] + 2\mathcal{H}(f,g)\;.
\end{equs}
\end{lemma}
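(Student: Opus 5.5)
The plan is to reduce both identities to Lemma~\ref{lem:product_formulae}, applied componentwise to the expansions of $\nabla_i f$ and $\nabla_i g$, and summed over $i\in[m]$. By bilinearity it suffices to fix $i$ and take simple tensors $\nabla_i f = \sum_k a_k\otimes b_k$ and $\nabla_i g=\sum_l c_l\otimes d_l$ in $\Poly^e$. The key point is that $\Id\otimes\Lin_V$ and $\Lin_V^e$ act on $\nabla_i f$ through its tensor legs, so Lemma~\ref{lem:product_formulae} applies verbatim to each pair $(a_k\otimes b_k, c_l\otimes d_l)$.

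\emph{Identity \eqref{eq:hessian_product_IdL}.} Applying \eqref{eq:product_IdL} to each pair and summing over $k,l$ produces the first two terms directly, by linearity of $\Id\otimes\Lin_V$ and bilinearity of the product $\cdot$. It remains to identify the correction
\[
2\sum_{k,l}(\Id_{\pop}\otimes\omega^{\op}\otimes\Id_{\Poly})\bigl[c_la_k^*\otimes\bigl((\nabla b_k)^*\cdot\nabla d_l\bigr)\bigr]
\]
(summed over $i$, with the inner $\nabla=(\nabla_j)_j$) with $2\mathcal H^{\mathfrak r}(f,g)$. Since $\mathrm{Hess}^{\mathfrak r}_{ji}(f)=(\Id\otimes\nabla_j)\nabla_i f=\sum_k a_k\otimes\nabla_j b_k\in\pop\otimes\pop\otimes\Poly$, the $(i,i)$ diagonal entry of $\mathrm{Hess}^{\mathfrak r}(f)^*\mathrm{Hess}^{\mathfrak r}(g)$ is $\sum_j\sum_{k,l}(a_k\otimes\nabla_jb_k)^*(c_l\otimes\nabla_jd_l)$. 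With componentwise multiplication in $\pop\otimes\pop\otimes\Poly$, the first leg is $\mop(a_k^*,c_l)=c_la_k^*$, which is exactly the first leg above, and the remaining legs give $(\nabla_j b_k)^*\cdot\nabla_j d_l$. Slicing the middle leg by $\omega$ and taking $\Tr$ (the sum over $i$) then yields the claim.

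\emph{Identity \eqref{eq:hessian_product_L2}.} I will decompose $\Lin_V^e=(\Lin_V^{\op}\otimes\Id)+(\Id\otimes\Lin_V)$. The second summand is handled by the previous step. For the first, I use \eqref{eq:product_L2}, whose extra term is $2(\omega^{\op}\otimes\Id\otimes\Id)\bigl[((\nabla a_k)^*\cdot\nabla c_l)\otimes b_k^*d_l\bigr]$, and match it with $\mathrm{Hess}^{\mathfrak l}_{ji}=(\nabla_j\otimes\Id)\nabla_i$. Here $\nabla_j$ acts on the $\pop$ leg, which is where the remark following Lemma~\ref{lem:product_formulae} and the opposite product formula \eqref{eq:productLop} enter.

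The main obstacle is the bookkeeping of legs and opposite multiplications in this $\mathfrak l$ term. The point to check is that $\nabla_j$ on $\pop$, which lands in $(\pop)^e=\Poly\otimes\pop$, produces the leg ordering $\Poly\otimes\pop\otimes\Poly$ used in Definition~\ref{def:nc_hessian}. One must also check that, after the adjoint and the matrix product, the slice $\omega$ falls on the first leg, with traciality of $\omega$ used to reconcile $\omega^{\op}$ with $\omega$. I expect this to follow by writing out $\nabla_j$ on the opposite algebra via $\bop$ and using $\mop\circ(*\otimes*)=*\circ\mcb m$, but this is where sign and ordering errors could appear, so I would verify it on monomials $\indet^w$ first.
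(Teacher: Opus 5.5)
Your proposal is correct and follows the paper's proof. Both expand $\nabla_i f$ and $\nabla_i g$ into simple tensors, apply Lemma~\ref{lem:product_formulae} termwise, and regroup the two correction terms of \eqref{eq:product_L2} into the diagonal entries of $\mathrm{Hess}^{\mathfrak l}(f)^*\mathrm{Hess}^{\mathfrak l}(g)$ and $\mathrm{Hess}^{\mathfrak r}(f)^*\mathrm{Hess}^{\mathfrak r}(g)$.

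The $\mathfrak l$-bookkeeping you flag needs no extra work. The first correction term of \eqref{eq:product_L2} is already written with $\nabla_j$ acting on $\pop$ and landing in $(\pop)^e=\Poly\otimes\pop$. It is therefore literally $\bigl((\nabla_j\otimes\Id)\nabla_i f\bigr)^*\cdot\bigl((\nabla_j\otimes\Id)\nabla_i g\bigr)$ with its first leg sliced by $\omega$. Once \eqref{eq:product_L2} is taken as given, the reordering you expected to handle with traciality has already been done.
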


\begin{proof}
    We prove \eqref{eq:hessian_product_L2}; the identity \eqref{eq:hessian_product_IdL} follows by a simpler argument.

    Fix $j \in [m]$ and write $\nabla_{j}f = \sum_{k} a_{k} \otimes b_{k}$ and $\nabla_{j}g = \sum_{k'} c_{k'} \otimes d_{k'}$ with $a_k , c_{k'} \in \pop$ and $b_k, d_{k'} \in \Poly$. By \eqref{eq:product_L2},
    \begin{equs}
        \Lin_V^e\big[(\nabla_{j} f)^{\ast} \cdot (\nabla_{j} g)\big] &= \big[\Lin_V^e(\nabla_{j} f)^{\ast}\big] \cdot (\nabla_{j} g) + (\nabla_{j} f)^{\ast} \cdot \big[\Lin_V^e(\nabla_{j} g)\big]\\
    &\quad + 2\sum_{k,k'}\sum_{i=1}^{m}\bigg[(\omega^{\op} \otimes \Id_{\pop} \otimes \Id_{\Poly})\big[(\nabla_{i}a_{k})^{\ast} \cdot (\nabla_{i}c_{k'}) \otimes b_{k}^{\ast}d_{k'}\big]\\
    &\qquad\qquad + (\Id_{\pop} \otimes \omega^{\op} \otimes \Id_{\Poly})\big[c_{k'} a_{k}^{\ast} \otimes ((\nabla_{i}b_{k})^{\ast} \cdot \nabla_{i}d_{k'})\big]\bigg]\;.
    \end{equs}
    The result follows from observing that, for fixed $i \in [m]$,
    \begin{equs}
        \sum_{k,k'}\big((\nabla_{i} a_{k})^{\ast} \cdot (\nabla_{i} c_{k'})\big) \otimes b_{k}^{\ast}d_{k'} &= \Big((\nabla_{i} \otimes \Id)\nabla_{j}f\Big)^{\ast} \cdot \Big((\nabla_{i} \otimes \Id)\nabla_{j}g\Big)\;,\\
    \sum_{k,k'} c_{k'} a_{k}^{\ast} \otimes \big((\nabla_{i}b_{k})^{\ast} \cdot (\nabla_{i}d_{k'})\big) &= \Big((\Id \otimes \nabla_{i})\nabla_{j}f\Big)^{\ast} \cdot \Big((\Id \otimes \nabla_{i})\nabla_{j}g\Big)\;.
    \end{equs}
    Summing over $i$ and $j$ gives the Hessian terms $\mathcal{H}^{\mathfrak{l}}(f,g)$ and $\mathcal{H}^{\mathfrak{r}}(f,g)$.
\end{proof}

\subsection{Iterated Carr\'{e} du Champ}\label{ss:nc_gamma2}

\subsubsection{First Formulation: \TitleEquation{\Gamma_{2}}{Gamma\_2}}

We start by formally differentiating the expression underlying \eqref{eq:gradientestimate1}.
Setting $F(s) = P_{t-s}f$, and $\Psi_K(s) = e^{-2Ks} P_s\mbig[\Gamma(F(s))\mbig]$, we compute
\begin{equs}
    \Psi_{K}'(s) &= e^{-2Ks}\, P_{s}\Big[-2K\,\Gamma(F) + (\omega^{\op} \otimes \Lin_V)\big[(\nabla F)^{\ast} \cdot (\nabla F)\big] - \\
    &\qquad\qquad - (\omega^{\op} \otimes \Id)\big[(\nabla \Lin_V F)^{\ast} \cdot (\nabla F)\big] - (\omega^{\op} \otimes \Id)\big[(\nabla F)^{\ast} \cdot (\nabla \Lin_V F)\big]\Big]=\\
    &= 2e^{-2Ks}\, P_{s}\big[\Gamma_{2}(F) - K\,\Gamma(F)\big]\;, \label{eq:Psi_deriv}
\end{equs}
where the iterated carr\'{e} du champ is defined for $f,g \in \Poly$ by 
\begin{equs}[eq:nc_gamma2_def1]
    \Gamma_{2}(f,g) \eqdef & \frac{1}{2}\Big(\Lin_V\,\Gamma(f,g) - \Gamma(f, \Lin_V g) - \Gamma(\Lin_V f, g)\Big) = \\
    =& \frac{1}{2}\Big((\omega^{\op} \otimes \Lin_V)\big[(\nabla f)^{\ast} \cdot (\nabla g)\big] - (\omega^{\op} \otimes \Id)\big[(\nabla\Lin_V f)^{\ast} \cdot (\nabla g)\big] - \\
    &\quad - (\omega^{\op} \otimes \Id)\big[(\nabla f)^{\ast} \cdot (\nabla\Lin_V g)\big]\Big) =\\
    =& \frac{1}{2} (\omega^{\op} \otimes \Id) \Big(\big(\llbracket\Lin_V,\nabla\rrbracket_{\mfr} f\big)^{\ast} \cdot \nabla g + (\nabla f)^{\ast} \cdot \big(\llbracket\Lin_V ,\nabla\rrbracket_{\mfr} g\big) + 2 \CH^{\mfr}(f,g)\Big)\;.
\end{equs}
In the third equality, we used the product formula (Lemma~\ref{lem:hessian_product}) and introduced
\begin{equ}[eq:commutator1]
    \llbracket A, \nabla_{i}\rrbracket_{\mfr} \eqdef (\Id \otimes A)\nabla_{i} -\nabla_{i}A \; ,
    \quad
    \llbracket A, \nabla\rrbracket_{\mfr}f \eqdef \big(\llbracket A, \nabla_{i}\rrbracket_{\mfr} f\big)_{i=1}^{m} \; .
\end{equ}

To make the motivating calculation rigorous we need to extend $\Gamma_2$ to a sufficiently large $P_t$-invariant domain. 
By Remark~\ref{rem:DomainAss} together with Corollary~\ref{cor:ProdDomain} and the analogous factorwise argument based on \eqref{eq:product_IdL}, we see that the first term is well-defined on $D\bigl((-\Lin_V)^{3/2}\bigr)^2$. Similarly, by functional calculus, we see that on this domain, $\Lin_V f \in D\bigl((-\Lin_V)^{1/2}\bigr) = D(\CE)$. Therefore, we obtain the extension
\begin{equ}
    \Gamma_2 \colon D\bigl((-\Lin_V)^{3/2}\bigr) \times D\bigl((-\Lin_V)^{3/2}\bigr) \longrightarrow \CL^1(\omega)
\end{equ}
on a $P_s$-invariant domain. Note that $\Lin_V \Gamma(f,g)$ should be interpreted as $(\Lin_V)_1 \Gamma(f,g)$.

The rest of \eqref{eq:nc_gamma2_def1} could be justified analogously if one extended $\CH^\mfr$, however, this will not be necessary for us here. 

This motivates the following definition. 
\begin{definition}
    We say $V$ satisfies a $\Gamma_2$-criterion with constant $K \geqslant 0$ if for all $a \in D\bigl((-\Lin_V)^{3/2}\bigr)$, 
    \begin{equ}
        \Gamma_2(a) \geqslant K\Gamma(a)
    \end{equ}
    in $\CL^1(\omega)_+$. 
\end{definition}

\begin{remark}
    Since the map
    \begin{equ}
        a \longmapsto \Gamma_2(a)-K\Gamma(a)
    \end{equ}
    is continuous from $D((-\Lin_V)^{3/2})$, equipped with its graph norm, into $\CL^1(\omega)$, and $\CL^1(\omega)_+$ is closed, the graph-norm density of $\Poly$ shows that it is sufficient to verify the $\Gamma_2$-criterion for $a\in\Poly$. 
    
    For such $a$, the element $\Gamma_2(a)-K\Gamma(a)$ belongs to $\CM_{\mathrm{sa}}$, so by \eqref{eq:positivity} its positivity may equivalently be tested against
    all states on $\CM$.
\end{remark}

We then have the following lemma. 

\begin{lemma}\label{lem:gamma2_comparison}
    Suppose $V$ satisfies a $\Gamma_2$-condition with constant $K \geqslant 0$. Then the weak gradient estimate \eqref{eq:gradientestimate1} holds with the same constant $K$.
\end{lemma}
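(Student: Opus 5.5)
The plan is the usual interpolation argument, made rigorous in weak form against test elements so that positivity in $\CL^1(\omega)_+$ is checked through the duality of Section~\ref{sec:notation}. Fix $t>0$ and $a\in D\bigl((-\Lin_V)^{3/2}\bigr)$. By the remark following Definition~\ref{def:NCGD}, graph-norm continuity and density reduce \eqref{eq:gradientestimate1} to such $a$; in fact it suffices to treat $a\in\Poly$ and then approximate. Fix $b\in\CM_+$ and set
\begin{equ}
    \psi(s) \eqdef e^{-2Ks}\,\omega\bigl(b\,P_s\Gamma(P_{t-s}a)\bigr) = e^{-2Ks}\,\omega\bigl((P_sb)\,\Gamma(P_{t-s}a)\bigr)\;,\qquad s\in[0,t]\;,
\end{equ}
where the second form uses \eqref{eq:tensor_semigroup_L1_symmetry} (or its one-factor version from Assumption~\ref{ass:standing_symmetry}). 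The claim is $\psi(0)\leqslant\psi(t)$, which is exactly $\omega\bigl(b\,\Gamma(P_ta)\bigr)\leqslant e^{-2Kt}\omega\bigl(b\,P_t\Gamma(a)\bigr)$ for every $b\in\CM_+$. By the characterisation of $\CL^1(\omega)_+$, this is the weak gradient estimate.

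The key step is to show that $\psi$ is differentiable on $(0,t)$, continuous on $[0,t]$, and satisfies
\begin{equ}
    \psi'(s) = 2e^{-2Ks}\,\omega\Bigl(b\,P_s\bigl[\Gamma_2(F(s))-K\Gamma(F(s))\bigr]\Bigr)\;,\qquad F(s)=P_{t-s}a\;,
\end{equ}
which is the rigorous version of \eqref{eq:Psi_deriv}. Since $P_{t-s}$ commutes with $(-\Lin_V)^{3/2}$ by functional calculus, $F(s)\in D\bigl((-\Lin_V)^{3/2}\bigr)$, and $s\mapsto F(s)$ is continuous in that graph norm, with derivative $-\Lin_VF(s)$ in the $D\bigl((-\Lin_V)^{1/2}\bigr)$ graph norm for $s<t$. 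The computation then splits into two pieces.

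\emph{First piece.} By the polarised continuity of $\Gamma\colon D(\CE)\times D(\CE)\to\CL^1(\omega)$, the map $s\mapsto\Gamma(F(s))$ is differentiable in $\CL^1(\omega)$ with derivative $-\Gamma(\Lin_VF,F)-\Gamma(F,\Lin_VF)$.

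\emph{Second piece.} For the derivative in $s$ of $P_s$, I will use the duality form $\omega\bigl((P_sb)\,\Gamma(F)\bigr)$. Since $\Gamma(F(s))\in D\bigl((\Lin_V)_1\bigr)$ by Corollary~\ref{cor:ProdDomain} together with Remark~\ref{rem:DomainAss}, the derivative of $s\mapsto P_s\Gamma(F(s_0))$ is $P_s(\Lin_V)_1\Gamma(F(s_0))$.

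A product-rule argument, using uniform boundedness of $P_s$ on $\CL^1$, combines the two pieces, and the sum is $2\Gamma_2(F)$ by definition \eqref{eq:nc_gamma2_def1}, interpreted on $D\bigl((-\Lin_V)^{3/2}\bigr)$. The hypothesis $\Gamma_2(F)\geqslant K\Gamma(F)$ in $\CL^1(\omega)_+$, together with positivity of $P_s$ and $b\geqslant0$, gives $\psi'\geqslant0$. Integrating yields $\psi(0)\leqslant\psi(t)$.

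I expect the main obstacle to be the justification of the derivative in the second piece. We need joint differentiability, in $s$, of $P_s\Gamma(F(s))$ in $\CL^1$ while both arguments move, and we need $\Gamma(F(s))$ to vary continuously in the graph norm of $(\Lin_V)_1$. This requires the continuity of $D\bigl((-\Lin_V)^{3/2}\bigr)\ni f\mapsto\Gamma(f)\in D\bigl((\Lin_V)_1\bigr)$. I would try to extract that continuity from Remark~\ref{rem:DomainAss} (the map $\overline\nabla_i$ into $D\bigl((-\Lin_V)^e\bigr)$) combined with the factorwise product formula \eqref{eq:product_IdL} and Hölder's inequality, exactly as in the sentence preceding the definition of the $\Gamma_2$-criterion. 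If this is awkward, a fallback is to work with difference quotients $h^{-1}\bigl(\psi(s+h)-\psi(s)\bigr)$ directly, splitting into the two increments and using strong continuity of $P_s$ on $\CL^1(\omega)$ from Proposition~\ref{prop:basic_cont} to pass to the limit.

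Finally, I extend from $a\in D\bigl((-\Lin_V)^{3/2}\bigr)$ to $a\in D(\overline\nabla)$. This uses the density of $D\bigl((-\Lin_V)^{3/2}\bigr)$ in $D(\CE)$, the continuity of $\Gamma$ on $D(\CE)$ into $\CL^1$, the fact that $P_t$ is a contraction on $D(\CE)$, and closedness of $\CL^1(\omega)_+$.
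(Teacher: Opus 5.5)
Your proposal is correct and follows the paper's proof: the interpolation $s\mapsto e^{-2Ks}P_s\Gamma(P_{t-s}a)$ has derivative $2e^{-2Ks}P_s\bigl[\Gamma_2(F(s))-K\Gamma(F(s))\bigr]$, which is non-negative by the $\Gamma_2$-criterion applied to $F(s)=P_{t-s}a\in D\bigl((-\Lin_V)^{3/2}\bigr)$, and graph-norm density then extends the estimate to $D(\overline\nabla)$. Testing against $b\in\CM_+$ and splitting the difference quotient only make explicit the differentiation that the paper takes for granted. For that step, the graph-norm continuity you flag as the main obstacle is not actually needed: it suffices that $\Gamma(F(s))\in D\bigl((\Lin_V)_1\bigr)$ for each fixed $s$, together with $\CL^1$-differentiability of $s\mapsto\Gamma(F(s))$ and strong continuity of $P_s$ on $\CL^1(\omega)$.
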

\begin{proof}
    If $f\in\Poly$, then $F(s)=P_{t-s}f$ belongs to $D((-\Lin_V)^{3/2})$ by the invariance of this domain. Hence the $\Gamma_2$-criterion and \eqref{eq:Psi_deriv} give $\Psi_K'(s)\geqslant0$ for every $s\in[0,t]$. Evaluating at the endpoints proves \eqref{eq:gradientestimate1} on $\Poly$, and graph-norm density and continuity extend it to $D(\overline\nabla)$.
\end{proof}

\subsubsection{Second Formulation: \TitleEquation{\tilde{\Gamma}_{2}}{Gamma\_2}}

Define the strong commutator and the lifted iterated carr\'{e} du champ by
\begin{equ}[eq:commutator2]
    \llbracket\Lin_V,\nabla_i\rrbracket \eqdef \Lin_V^e\nabla_i-\nabla_i\Lin_V\;,\qquad \llbracket\Lin_V,\nabla\rrbracket f \eqdef \bigl(\llbracket\Lin_V,\nabla_i\rrbracket f\bigr)_{i=1}^m\;,
\end{equ}
and
\begin{equs}[eq:nc_gamma2_tilde]
    \tilde\Gamma_2(f,g) &\eqdef \frac12\Big(\Lin_V^e\tilde\Gamma(f,g)-\tilde\Gamma(\Lin_Vf,g)-\tilde\Gamma(f,\Lin_Vg)\Big) =\\
&= \frac12\bigl(\llbracket\Lin_V,\nabla\rrbracket f\bigr)^*\cdot\nabla g + \frac12(\nabla f)^*\cdot\bigl(\llbracket\Lin_V,\nabla\rrbracket g\bigr) + \CH(f,g)\;.
\end{equs}
The second equality follows from Lemma~\ref{lem:hessian_product}; the factor $1/2$ in the definition and the factor $2$ in the Hessian product formula are both essential. Invariance implies
\begin{equ}[eq:gamma2_lift]
    \Gamma_2(f,g) = (\omega^{\op}\otimes\Id)\bigl(\tilde\Gamma_2(f,g)\bigr)\;.
\end{equ}
Moreover,
\begin{equ}[eq:Psitilde_deriv]
    \frac{\d}{\d s}\left[e^{-2Ks}P_s^e\tilde\Gamma(F(s))\right] = 2e^{-2Ks}P_s^e\bigl(\tilde\Gamma_2(F(s))-K\tilde\Gamma(F(s))\bigr)\;.
\end{equ}
The same domain considerations as in the preceding subsection continue to hold.

\begin{definition}[Lifted $\tilde\Gamma_2$ Criterion]
    We say that the lifted $\tilde\Gamma_2$-criterion holds with constant $K\in\RR$ if for all $a \in D((-\Lin_V)^{\frac{3}{2}})$
\begin{equ}
    \tilde\Gamma_2(a)\geqslant K\tilde\Gamma(a)
\end{equ}
    in $\CL^1(\omega^e)_+$.
\end{definition}

\begin{remark}
    It is again sufficient to check this condition merely on $\Poly$.
\end{remark}

\begin{lemma}\label{lem:gamma2tilde_comparison}
    If the completed lifted $\tilde\Gamma_2$-criterion holds with constant $K$, then \eqref{eq:gradientestimate2} holds with the same $K$.
\end{lemma}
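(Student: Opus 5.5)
The plan mirrors the proof of Lemma~\ref{lem:gamma2_comparison}, with the enveloping semigroup $P_s^e$ in place of $P_s$. First fix $f\in\Poly$ and $t\geqslant0$, and set $F(s)=P_{t-s}f$ for $s\in[0,t]$. By Assumption~\ref{ass:standing_symmetry} and functional calculus, $D((-\Lin_V)^{3/2})$ is $P_t$-invariant, so $F(s)\in D((-\Lin_V)^{3/2})$ for every $s$. Hence $\tilde\Gamma_2(F(s))$ is defined in $\CL^1(\omega^e)$ by the domain considerations following \eqref{eq:nc_gamma2_def1}: Remark~\ref{rem:DomainAss} gives $\overline\nabla_iF(s)\in D((-\Lin_V)^e)$, and the enveloping product rule \eqref{eq:product_L2} together with Hölder's inequality then applies, as in Corollary~\ref{cor:ProdDomain}. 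Next consider
\[
\tilde\Psi_K(s)=e^{-2Ks}P_s^e\tilde\Gamma(F(s)) \in \CL^1(\omega^e),
\]
and justify the derivative formula \eqref{eq:Psitilde_deriv}.

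To make \eqref{eq:Psitilde_deriv} rigorous, I would not differentiate in norm. Instead I would test $\tilde\Psi_K$ against a fixed $B\in(\CM^e)_+$ (or a dense set of such $B$), using the duality \eqref{eq:tensor_semigroup_L1_symmetry}:
\[
\omega^e\big(B\,\tilde\Psi_K(s)\big)=e^{-2Ks}\,\omega^e\big((P_s^eB)\,\tilde\Gamma(F(s))\big).
\]
Then $s\mapsto F(s)$ is $C^1$ into $D((-\Lin_V)^{1/2})$ with derivative $-\Lin_VF(s)$, since $f\in D((-\Lin_V)^{3/2})$. By the graph-norm continuity of $\tilde\Gamma$, the map $s\mapsto\tilde\Gamma(F(s))$ is $C^1$ into $\CL^1(\omega^e)$ with derivative $-\tilde\Gamma(\Lin_VF,F)-\tilde\Gamma(F,\Lin_VF)$. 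The term $P_s^eB$ is the delicate one. I would move $P_s^e$ back onto $\tilde\Gamma(F(s))$ and use that $\tilde\Gamma(F(s))\in D((\Lin_V^e)_1)$, which follows from the domain analysis above. The derivative of $P_s^e$ then produces $\Lin_V^e\tilde\Gamma(F(s))$. Summing the three contributions yields
\[
\frac{\d}{\d s}\,\omega^e\big(B\,\tilde\Psi_K(s)\big)=2e^{-2Ks}\,\omega^e\Big(B\,P_s^e\big(\tilde\Gamma_2(F(s))-K\tilde\Gamma(F(s))\big)\Big),
\]
after which I would integrate over $[0,t]$.

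By the lifted criterion, $\tilde\Gamma_2(F(s))-K\tilde\Gamma(F(s))\in\CL^1(\omega^e)_+$. Since $P_s^e$ is positive on $\CL^1(\omega^e)$, being the extension of a normal completely positive map, the integrand is nonnegative. Therefore $\omega^e(B\tilde\Psi_K(0))\leqslant\omega^e(B\tilde\Psi_K(t))$, that is,
\[
\omega^e\big(B\,\tilde\Gamma(P_tf)\big)\leqslant e^{-2Kt}\,\omega^e\big(B\,P_t^e\tilde\Gamma(f)\big)
\]
for all $B\geqslant0$. By the characterisation of $\CL^1(\omega^e)_+$, this is exactly \eqref{eq:gradientestimate2} for $f\in\Poly$. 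Finally, I would extend to all $a\in D(\overline\nabla)$ by graph-norm density of $\Poly$. Here $\tilde\Gamma$ is continuous from $D(\overline\nabla)$ into $\CL^1(\omega^e)$, $P_t$ preserves $D(\CE)$ continuously by Proposition~\ref{prop:DirichletForm}, $P_t^e$ is an $\CL^1$-contraction, and the positive cone is closed.

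I expect the main obstacle to be the differentiability step. In particular, it must be shown that $\tilde\Gamma(F(s))\in D((\Lin_V^e)_1)$ with $(\Lin_V^e)_1\tilde\Gamma(F)$ given by the product formula, uniformly enough in $s$ to justify the fundamental theorem of calculus. My first attempt would be to prove the product formula \eqref{eq:hessian_product_L2} for $f\in\Poly$ and then pass to $D((-\Lin_V)^{3/2})$ by graph-norm approximation, using closedness of $(\Lin_V^e)_1$ and the continuity of $\overline\nabla$ into $D((-\Lin_V)^e)$ from Remark~\ref{rem:DomainAss}.
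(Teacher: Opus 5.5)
Your proposal is correct and follows the paper's argument. The paper simply observes via \eqref{eq:Psitilde_deriv} that $s\mapsto e^{-2Ks}P_s^e\tilde\Gamma(P_{t-s}f)$ has derivative $2e^{-2Ks}P_s^e\bigl(\tilde\Gamma_2(F(s))-K\tilde\Gamma(F(s))\bigr)\geqslant0$, and then compares the endpoints. Your extra steps only make explicit what the paper leaves implicit: testing against $B\in(\CM^e)_+$, justifying differentiability (the membership $\tilde\Gamma(F(s))\in D((\Lin_V^e)_1)$ is already presupposed by $\tilde\Gamma_2$ being defined on $D((-\Lin_V)^{3/2})$), and extending from $\Poly$ by graph-norm density.
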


\begin{proof}
    Equation~\eqref{eq:Psitilde_deriv} shows that the completed order-valued interpolation is increasing. Evaluating it at the endpoints gives \eqref{eq:gradientestimate2}.
\end{proof}

\subsection{Commutator Identities}\label{ss:commutator}

We now express on $\Poly$ the curvature in terms of the free Hessian of $V$. For comparison with the weak calculus, set
\begin{equ}[eq:weak_commutator_definition]
    \llbracket\Lin_V,\nabla_i\rrbracket_\omega \eqdef (\omega^{\op}\otimes\Lin_V)\nabla_i-(\omega^{\op}\otimes\Id)\nabla_i\Lin_V\;.
\end{equ}

\begin{lemma}\label{lem:commutator_identity}
    \begin{equ}[eq:commutator_identity]
        (\omega^{\op}\otimes\Id)\llbracket\Lin_V,\nabla_i\rrbracket = \llbracket\Lin_V,\nabla_i\rrbracket_\omega\;.
    \end{equ}
\end{lemma}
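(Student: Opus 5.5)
Unwinding the definitions, the left-hand side of \eqref{eq:commutator_identity} is
\[
(\omega^{\op}\otimes\Id)\Lin_V^e\nabla_i-(\omega^{\op}\otimes\Id)\nabla_i\Lin_V ,
\]
with $\Lin_V^e=\Lin_V^{\op}\otimes\Id+\Id\otimes\Lin_V$. The second term coincides with the second term of \eqref{eq:weak_commutator_definition}. The term $(\omega^{\op}\otimes\Id)(\Id\otimes\Lin_V)\nabla_i$ equals $(\omega^{\op}\otimes\Lin_V)\nabla_i$, since slice maps in different tensor factors commute. It therefore suffices to show that
\[
(\omega^{\op}\otimes\Id)(\Lin_V^{\op}\otimes\Id)\nabla_i f=0
\]
for every $f\in\Poly$.

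We work on $\Poly$, where $\nabla_i f$ is a finite sum $\sum_k a_k\otimes b_k$ with $a_k\in\pop$ and $b_k\in\Poly$. Then
\[
(\omega^{\op}\otimes\Id)(\Lin_V^{\op}\otimes\Id)\nabla_i f=\sum_k\omega^{\op}\bigl(\Lin_V^{\op}a_k\bigr)\,b_k .
\]
Since $\Lin_V^{\op}=\bop\circ\Lin_V\circ\bop$ and $\omega^{\op}$ is $\omega$ on the same underlying vector space, each coefficient equals $\omega(\Lin_V\,\bop(a_k))$. This vanishes by invariance: $\omega(\Lin_V x)=\Braket{\bone,\Lin_V x}_{\CL^2(\omega)}=\Braket{\Lin_V\bone,x}_{\CL^2(\omega)}=0$, using the self-adjointness of $\Lin_V$ (Assumption~\ref{ass:standing_symmetry}) and $\Lin_V\bone=0$. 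Equivalently, one can differentiate \eqref{eq:standing_invariance} at $t=0$ on the polynomial core. Summing over $k$ gives the claim.

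The argument is essentially bookkeeping, and the only point needing care is the identification of $\omega^{\op}\circ\Lin_V^{\op}$ with $\omega\circ\Lin_V$ under $\bop$. I expect this to be the main (if minor) obstacle: one must check that $\omega^{\op}$ is literally the same linear functional as $\omega$ on the underlying vector space, and that no $*$-operation enters. Since the statement is an identity of linear maps on $\Poly$, all sums are finite and no domain issues arise. If an extension beyond $\Poly$ is wanted later, it would follow from the continuity furnished by Remark~\ref{rem:DomainAss}, but the lemma as stated needs only the algebraic computation.
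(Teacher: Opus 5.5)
Your proposal is correct and matches the paper's proof: both expand $\Lin_V^e=\Lin_V^{\op}\otimes\Id+\Id\otimes\Lin_V$ and kill the $\Lin_V^{\op}\otimes\Id$ contribution using $\omega^{\op}\circ\Lin_V^{\op}=0$, which is the opposite-algebra form of the invariance identity $\omega\circ\Lin_V=0$. You additionally spell out this invariance via self-adjointness and $\Lin_V\bone=0$, which is how the paper obtains \eqref{eq:standing_invariance} from the symmetry assumption.
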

\begin{proof}
    By \eqref{eq:standing_invariance}, $\omega\Lin_V=0$, and hence $\omega^{\op}\Lin_V^{\op}=0$. Therefore,
    \begin{equs}
        (\omega^{\op}\otimes\Id)\llbracket\Lin_V,\nabla_i\rrbracket &= (\omega^{\op}\otimes\Id)\bigl((\Lin_V^{\op}\otimes\Id+\Id\otimes\Lin_V)\nabla_i-\nabla_i\Lin_V\bigr) =\\
        &= (\omega^{\op}\otimes\Lin_V)\nabla_i-(\omega^{\op}\otimes\Id)\nabla_i\Lin_V\;.
    \end{equs}
\end{proof}

\begin{lemma}\label{lem:Laplacian_commutator}
    For all $i \in [m]$,
\begin{equ}
    \llbracket\Delta_{\omega}, \nabla_{i}\rrbracket = 0\;.
\end{equ}
\end{lemma}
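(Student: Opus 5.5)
The plan is to check the identity directly on the monomial basis $\{\indet^{w}\}_{w\in\Words}$ of $\Poly$, since both sides are linear. Unwinding \eqref{eq:commutator2} with $\Lin_V$ replaced by $\Delta_\omega$, the claim reads
\begin{equ}
    (\Delta_\omega^{\op}\otimes\Id+\Id\otimes\Delta_\omega)\nabla_i\indet^{w} = \nabla_i\Delta_\omega\indet^{w}\;.
\end{equ}
Here $\Delta_\omega^{\op}=\bop\circ\Delta_\omega\circ\bop$ acts on the first tensor factor as the same linear map on the underlying vector space. The opposite product plays no role, because neither side multiplies two different tensor factors together. Recall two explicit formulas. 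From $\Delta=2\sum_k(\partial_k\otimes\Id)\partial_k$ and $\bar\eta$,
\begin{equ}
    \Delta_\omega\indet^{w}=\sum_{k=1}^m\sum_{w=u\,k\,v\,k\,s}\omega(\indet^{v})\,\indet^{u}\indet^{s}\;,
\end{equ}
and $\nabla_i\indet^{w}=\sum_{w=p\,i\,q}\indet^{p}\otimes\indet^{q}$. Everything then reduces to matching index sets.

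\emph{Right-hand side.} The terms of $\nabla_i\Delta_\omega\indet^{w}$ are indexed by a choice of letter $k$, a pair of marked occurrences of $k$ in $w$ with middle segment $v$, and an occurrence of $i$ in the concatenation $us$. Each such triple produces either $\omega(\indet^{v})\,\indet^{u_1}\otimes\indet^{u_2}\indet^{s}$, when the $i$ lies in $u=u_1iu_2$, or $\omega(\indet^{v})\,\indet^{u}\indet^{s_1}\otimes\indet^{s_2}$, when it lies in $s=s_1is_2$. So the index set consists of a marked pair together with a marked $i$-occurrence lying strictly outside the closed segment spanned by the pair; in particular the $i$ never sits at one of the $k$-positions.

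\emph{Left-hand side.} The terms of the left-hand side are indexed by a marked occurrence $w=p\,i\,q$ together with a marked pair of equal letters lying entirely inside $p$ (from $\Delta_\omega^{\op}\otimes\Id$) or entirely inside $q$ (from $\Id\otimes\Delta_\omega$). A pair inside $p=u\,k\,v\,k\,p'$ gives $\omega(\indet^{v})\,\indet^{u}\indet^{p'}\otimes\indet^{q}$. This is exactly the right-hand term with $s=p'iq$, $s_1=p'$, $s_2=q$. The case of a pair inside $q$ is symmetric and matches the terms with $i$ in $u$.

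\emph{Bijection.} The correspondence (marked $i$, marked pair) $\leftrightarrow$ (marked pair, marked $i$) is a bijection between the two index sets and preserves the summands, so the two sides agree. The configurations excluded from both sides correspond consistently. Pairs straddling the marked $i$ are excluded on the left because they lie in neither $p$ nor $q$. On the right they are excluded because that $i$ sits inside $v$, which is absorbed by $\omega$ and is not differentiated. Configurations where the $i$ coincides with a $k$-position are absent on both sides.

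The only point requiring care is the bookkeeping of the opposite-algebra identification in the first factor. I would state explicitly that $\Delta_\omega^{\op}$ coincides with $\Delta_\omega$ as a map on the underlying vector space of words, so the word $\indet^{u}\indet^{p'}$ appears in the same order on both sides. Once this is noted, the argument is a purely combinatorial matching, and no traciality of $\omega$ is needed.
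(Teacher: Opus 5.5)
Your proposal is correct and follows essentially the same route as the paper. Both arguments check the identity on monomials using $\Delta_\omega\indet^{w}=\sum_{w=u\,c\,v\,c\,s}\omega(\indet^{v})\,\indet^{us}$, and both match the two index sets bijectively as pairs (marked $i$-occurrence, contracted block lying entirely to one side of it). The only difference is cosmetic: the paper first removes the opposite-algebra identification and proves $\partial_i\Delta_\omega=(\Delta_\omega\otimes\Id+\Id\otimes\Delta_\omega)\partial_i$ in $\Poly\otimes\Poly$, whereas you keep $\nabla_i$ and observe that $\Delta_\omega^{\op}=\bop\circ\Delta_\omega\circ\bop$ is the same map on the underlying vector space.
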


\begin{proof}
    By definition $\llbracket\Delta_\omega,\nabla_i\rrbracket=\Delta_\omega^e\nabla_i-\nabla_i\Delta_\omega$. Since $\nabla_i$ is the opposite-first-factor form of $\partial_i$, the claim is equivalent to
\begin{equ}[eq:Laplacian_commutator_reduced]
    \partial_{i}\Delta_{\omega} = (\Delta_{\omega}\otimes\Id + \Id\otimes\Delta_{\omega})\,\partial_{i}\;,
\end{equ}
in $\Poly\otimes\Poly$.

Both sides of \eqref{eq:Laplacian_commutator_reduced} are linear, so it suffices to check the identity on a monomial $\indet^{w}$, $w \in \Words$. From $\Delta = 2\sum_{j}(\partial_{j}\otimes\Id)\partial_{j}$ and $\bar{\eta}(x\otimes y\otimes z) = \omega(y)\,xz$,
\begin{equ}[eq:Delta_omega_monomial]
    \Delta_{\omega}\indet^{w} = \!\!\sum_{w = u\,c\,v\,c\,s}\!\!\omega(\indet^{v})\,\indet^{us}\;,
\end{equ}
the sum running over ordered pairs of positions of $w$ holding a common letter $c$, with $u,v,s$ the segments lying before, between, and after them. Applying $\partial_{i}$ and splitting the product $\indet^{us}$ at the chosen letter $i$ --- which lies in $u$ or in $s$, never in the contracted segment $v$ --- gives
\begin{equs}
    \partial_{i}\Delta_{\omega}\indet^{w} = \!\!\sum_{w = u\,c\,v\,c\,s}\!\!\omega(\indet^{v}) \Big[ \sum_{u = \alpha\,i\,u'}\indet^{\alpha}\otimes\indet^{u' s} + \sum_{s = s'\,i\,\beta}\indet^{u s'}\otimes\indet^{\beta}\Big]\;.
\end{equs}
On the other hand, writing $\partial_{i}\indet^{w} = \sum_{w = \xi\,i\,\eta}\indet^{\xi}\otimes\indet^{\eta}$ and applying \eqref{eq:Delta_omega_monomial} in each slot,
\begin{equs}
    (\Delta_{\omega}\otimes\Id)\,\partial_{i}\indet^{w} &= \!\!\sum_{w = \xi\,i\,\eta}\ \sum_{\xi = u\,c\,v\,c\,s}\!\!\omega(\indet^{v})\,\indet^{us}\otimes\indet^{\eta}\;,\\
(\Id\otimes\Delta_{\omega})\,\partial_{i}\indet^{w} &= \!\!\sum_{w = \xi\,i\,\eta}\ \sum_{\eta = u\,c\,v\,c\,s}\!\!\omega(\indet^{v})\,\indet^{\xi}\otimes\indet^{us}\;.
\end{equs}
Every term on either side is indexed by a choice of one occurrence of the letter $i$ together with an ordered pair of occurrences of a common letter $c$, the $i$ lying outside the block $c\,v\,c$. When that block lies to the right of $i$ the contraction commutes with the split, and the term matches the corresponding summand of $(\Id\otimes\Delta_{\omega})\partial_{i}\indet^{w}$ (the first group in $\partial_{i}\Delta_{\omega}\indet^{w}$); when it lies to the left it matches the corresponding summand of $(\Delta_{\omega}\otimes\Id)\partial_{i}\indet^{w}$ (the second group). This is a bijection between the index sets, so the two sides of \eqref{eq:Laplacian_commutator_reduced} coincide.
\end{proof}

For the drift, define $T_{j,V}\colon\Poly\to\Poly$ by
\begin{equ}
    T_{j,V}f \eqdef -(\partial_jf)\tw D_jV\;,
\end{equ}
so that $\Lin_V=2\Delta_\omega+\sum_jT_{j,V}$.

\begin{lemma}\label{lem:drift_commutator}
    For any $i, j \in [m]$,
    \begin{equ}
        \llbracket T_{j,V},\nabla_i\rrbracket = T_{ij,V}\nabla_j\;,
    \end{equ}
    where $T_{ij,V} \colon \CP^{e} \rightarrow \CP^{e}$ is defined by
\begin{equ}
    T_{ij,V}\xi \eqdef (\partial_iD_jV)\cdot\xi\;.
\end{equ}
\end{lemma}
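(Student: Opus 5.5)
We verify the identity by a direct Leibniz-rule computation in $\Poly\otimes\Poly$, transporting it to $\Poly^e$ at the end via $\bone^{\op}\otimes\bone$. By definition, $\llbracket T_{j,V},\nabla_i\rrbracket = T^e_{j,V}\nabla_i-\nabla_iT_{j,V}$, with $T^e_{j,V}=T_{j,V}^{\op}\otimes\Id+\Id\otimes T_{j,V}$. Removing the opposite-algebra relabelling, the claim becomes
\begin{equ}
    -\partial_i\bigl((\partial_jf)\tw D_jV\bigr) + (T_{j,V}\otimes\Id+\Id\otimes T_{j,V})\partial_if = \text{(image of } (\partial_iD_jV)\cdot\nabla_jf)\;.
\end{equ}
By linearity it suffices to take $f=\indet^w$ a monomial.

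Write $\partial_j\indet^w=\sum_{w=w_1jw_2}\indet^{w_1}\otimes\indet^{w_2}$, so that $(\partial_jf)\tw D_jV=\sum\indet^{w_1}(D_jV)\indet^{w_2}$. Applying the Leibniz rule \eqref{eq:Leibniz} to this triple product gives three groups of terms.
\begin{itemize}
\item[(a)] The first group consists of terms where the chosen letter $i$ lies in $w_1$.
\item[(b)] The second group consists of terms where $i$ lies in $w_2$.
\item[(c)] The third group consists of terms where $\partial_i$ hits $D_jV$, giving $\sum\indet^{w_1}\,(\partial_iD_jV)\,\indet^{w_2}$, where the outer factors act by left and right multiplication on the two tensor legs.
\end{itemize}

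Similarly, $(T_{j,V}\otimes\Id)\partial_i\indet^w$ runs over occurrences of $i$ and then of $j$ to the left of that $i$, inserting $D_jV$ there. The term $(\Id\otimes T_{j,V})\partial_i\indet^w$ does the same with $j$ to the right of $i$. These two index sets are in bijection with groups (a) and (b), matched on pairs of positions (one $i$, one $j$) in $w$. The terms agree exactly, with signs cancelling against the minus signs in $T_{j,V}$. This is the same combinatorial bookkeeping as in the proof of Lemma~\ref{lem:Laplacian_commutator}: splitting at $i$ commutes with insertion at $j$ whenever the two positions are distinct.

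What survives is group (c), namely $\sum_{w=w_1jw_2}\indet^{w_1}\cdot(\partial_iD_jV)\cdot\indet^{w_2}$ in the bimodule sense, with the overall sign coming from $T_{j,V}$. The final step, which I expect to be the only delicate one, is to identify this with $(\partial_iD_jV)\cdot\nabla_jf$ in $\Poly^e$. The product on $\Poly^e=\pop\otimes\Poly$ reverses the first leg: for $\partial_iD_jV=\sum p\otimes q$, we have $(p\otimes q)\cdot(\indet^{w_1}\otimes\indet^{w_2})=\indet^{w_1}p\otimes q\indet^{w_2}$. This is precisely the insertion $\indet^{w_1}p\otimes q\indet^{w_2}$ produced in (c).

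I will check this ordering carefully, together with the sign convention. The sign must be tracked through $-\partial_i(\ldots)$ versus the $-$ inside $T_{j,V}$, and it may force the stated identity to carry the sign absorbed into the definition of $T_{ij,V}$ up to the convention $\llbracket A,\nabla\rrbracket=A^e\nabla-\nabla A$. Once the ordering is confirmed, applying $\bone^{\op}\otimes\bone$ yields $\llbracket T_{j,V},\nabla_i\rrbracket f=T_{ij,V}\nabla_jf$.
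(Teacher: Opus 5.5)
Your strategy is the paper's: expand $\partial_i\bigl((\partial_jf)\tw D_jV\bigr)$ by the Leibniz rule, match the two groups where $\partial_i$ falls on an outer factor with $T^e_{j,V}\partial_if$, and read off the remaining group as $T_{ij,V}\nabla_jf$. Your bijection of position pairs is exactly the identity $(\partial_j\otimes\Id)\partial_i=(\Id\otimes\partial_i)\partial_j$ that the paper uses, together with its mirror $(\Id\otimes\partial_j)\partial_i=(\partial_i\otimes\Id)\partial_j$. Your ordering check $(p\otimes q)\cdot(\indet^{w_1}\otimes\indet^{w_2})=\indet^{w_1}p\otimes q\,\indet^{w_2}$ in $\Poly^e$ is also right.

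The problem is the sign. It is the entire content of the identity, and you leave it open. Since $T_{j,V}f=-(\partial_jf)\tw D_jV$, the second half of the commutator is $-\nabla_iT_{j,V}f=+\partial_i\bigl((\partial_jf)\tw D_jV\bigr)$, not $-\partial_i(\cdots)$ as in your displayed reformulation. Taken literally, your display does not cancel. One has $\mathrm{(a)}=-(\Id\otimes T_{j,V})\partial_if$ and $\mathrm{(b)}=-(T_{j,V}\otimes\Id)\partial_if$. So your left-hand side equals $2T^e_{j,V}\partial_if-\mathrm{(c)}$, not a single group.

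With the sign corrected, the commutator is $T^e_{j,V}\partial_if+\mathrm{(a)}+\mathrm{(b)}+\mathrm{(c)}=\mathrm{(c)}$. Here $\mathrm{(c)}=\sum_{w=w_1jw_2}\indet^{w_1}(\partial_iD_jV)\indet^{w_2}$ enters with a plus sign. By your ordering check, this is exactly $T_{ij,V}\nabla_jf$; equivalently, $\nabla_iT_{j,V}f=T^e_{j,V}\nabla_if-T_{ij,V}\nabla_jf$, which is how the paper phrases it. So nothing needs to be absorbed into the definition of $T_{ij,V}$: the lemma holds verbatim with the convention $\llbracket A,\nabla_i\rrbracket=A^e\nabla_i-\nabla_iA$. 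Once you fix the display and carry the two minus signs through this way, your argument is complete and coincides with the paper's.
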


\begin{proof}
    Write $\partial_jf=\sum_k f_k^{(1)}\otimes f_k^{(2)}$. Applying $\partial_i$ to $T_{j,V}f$ gives the two terms obtained by differentiating $f_k^{(1)}$ and $f_k^{(2)}$, together with
    \begin{equ}
        -\sum_k(\partial_iD_jV)\cdot\bigl((f_k^{(1)})^{\op}\otimes f_k^{(2)}\bigr) = -T_{ij,V}\partial_j f\;.
    \end{equ}
    The first two terms are exactly $T_{j,V}^e\partial_if$ because $(\partial_j\otimes\Id)\partial_i=(\Id\otimes\partial_i)\partial_j$. Hence $T_{j,V}^e\partial_i-\partial_i T_{j,V}=T_{ij,V}\partial_j$.
\end{proof}

\begin{proposition}\label{prop:commutator}
    For any $i \in [m]$,
    \begin{equ}
        \llbracket\Lin_V, \nabla_{i}\rrbracket = \sum_{j=1}^{m} T_{ij,V}\,\nabla_{j}\;, \qquad  \llbracket\Lin_V, \nabla_{i}\rrbracket_{\omega} = (\omega^{\op} \otimes \Id) \sum_{j=1}^{m} T_{ij,V}\,\nabla_{j}\;.
    \end{equ}
\end{proposition}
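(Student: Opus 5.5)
The plan is to exploit linearity of the strong commutator in its first argument. Since $\Lin_V = 2\Delta_\omega + \sum_{j=1}^m T_{j,V}$ on $\Poly$, and the enveloping lift $A \mapsto A^e = A^{\op}\otimes\Id + \Id\otimes A$ is linear in $A$, the map $A \mapsto \llbracket A,\nabla_i\rrbracket = A^e\nabla_i - \nabla_i A$ is linear. We therefore decompose
\begin{equ}
    \llbracket\Lin_V,\nabla_i\rrbracket = 2\llbracket\Delta_\omega,\nabla_i\rrbracket + \sum_{j=1}^m \llbracket T_{j,V},\nabla_i\rrbracket\;.
\end{equ}
Lemma~\ref{lem:Laplacian_commutator} kills the first term, and Lemma~\ref{lem:drift_commutator} identifies each summand of the second as $T_{ij,V}\nabla_j$. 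Summing over $j$ gives the first identity.

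The one point requiring care is the bookkeeping between $\partial_i$ and $\nabla_i = (\bone^{\op}\otimes\bone)\partial_i$. Lemma~\ref{lem:drift_commutator} is stated with $\nabla$ but proved at the level of $\partial$. We must check that the lift $T_{j,V}^e = T_{j,V}^{\op}\otimes\Id + \Id\otimes T_{j,V}$ corresponds, under $\bone^{\op}\otimes\bone$, to $T_{j,V}\otimes\Id + \Id\otimes T_{j,V}$ on $\Poly\otimes\Poly$. Since $T^{\op}_{j,V} = \bop\circ T_{j,V}\circ\bop$, this holds by definition. We must also check that multiplication by $\partial_iD_jV$ in the product $\cdot$ of $\CP^e$ reproduces the term $\sum_k (\partial_i D_jV)$ coming from the Leibniz rule applied to $f^{(1)}_k\,(D_jV)\,f^{(2)}_k$. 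Differentiating the middle factor gives $\sum_k f_k^{(1)}(D_jV)^{(1)}\otimes (D_jV)^{(2)}f_k^{(2)}$. In $\CP^e$ the product is $(a\otimes b)\cdot(c\otimes d) = ca\otimes bd$, so $(\nabla_iD_jV)\cdot\nabla_j f$ is exactly this sum, with the correct sign. The same reasoning applies to $\Delta_\omega$ via $\Delta_\omega^{\op}=\bop\Delta_\omega\bop$, so Lemma~\ref{lem:Laplacian_commutator} transfers verbatim. We expect this sign and ordering check to be the only real obstacle, and it is routine.

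For the second identity, apply the slice map $\omega^{\op}\otimes\Id$ to the first one and invoke Lemma~\ref{lem:commutator_identity}. That lemma gives $(\omega^{\op}\otimes\Id)\llbracket\Lin_V,\nabla_i\rrbracket = \llbracket\Lin_V,\nabla_i\rrbracket_\omega$, using $\omega\circ\Lin_V=0$ from invariance. Since the slice map is linear, it passes through the finite sum over $j$, which yields
\begin{equ}
    \llbracket\Lin_V,\nabla_i\rrbracket_\omega = (\omega^{\op}\otimes\Id)\sum_{j=1}^m T_{ij,V}\nabla_j\;.
\end{equ}
All identities are understood on $\Poly$, where every operator involved maps polynomials to polynomials, so no domain issues arise.
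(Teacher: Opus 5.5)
Your proposal is correct and follows essentially the same route as the paper. You split $\Lin_V = 2\Delta_\omega + \sum_j T_{j,V}$ and invoke Lemma~\ref{lem:Laplacian_commutator} and Lemma~\ref{lem:drift_commutator} for the first identity, then apply $\omega^{\op}\otimes\Id$ with Lemma~\ref{lem:commutator_identity} for the second; your extra check of the ordering $ca\otimes bd$ in the product on $\CP^e$ and of the sign is consistent with the paper's conventions.
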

\begin{proof}
    The first equality follows from Lemmata~\ref{lem:Laplacian_commutator} and~\ref{lem:drift_commutator}. Applying Lemma~\ref{lem:commutator_identity} gives the second.
\end{proof}

View $(T_{ij,V})_{i,j}$ as the matrix $\vec T_V\in\mathrm M_m(\Poly^e)$ acting on $(\Poly^e)^m$ by
\begin{equ}
    \vec{T}_{V}\vec{f} = \bigg(\sum_{j=1}^{m}T_{ij,V} \cdot f_{j}\bigg)_{i=1}^{m}\;.
\end{equ}
This is the enveloping-algebra Hessian of $V$.

\begin{definition}[Full and Partial Free Convexity]
    Let $K \geqslant 0$. We call $V$ \emph{fully $K$-convex} if, for every $\xi\in(\Poly^e)^m$,
    \begin{equ}[eq:full_convexity]
        \frac12\left[\xi^*\cdot(\vec T_V\xi)+(\vec T_V\xi)^*\cdot\xi\right] - K\,\xi^*\cdot\xi \geqslant 0
    \end{equ}
    in the positive cone of $\CM^e$. We call $V$ \emph{partially $K$-convex} if applying $\omega^{\op}\otimes\Id$ to \eqref{eq:full_convexity} gives a positive element of $\CM$. 
\end{definition}

Full $K$-convexity implies partial $K$-convexity. At the algebraic level, this uses the same Hessian as $h$-convexity in \cite{DabrowskiGuionnetShlyakhtenko2022}.

\begin{proposition}\label{prop:convexity_implies_curvature}
    If $V$ is fully $K$-convex, then $\tilde\Gamma_2\geqslant K\tilde\Gamma$. If it is partially $K$-convex, then $\Gamma_2\geqslant K\Gamma$.
\end{proposition}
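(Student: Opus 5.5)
The plan is to reduce, via the remarks following the definitions of the two criteria, to polynomial arguments $a\in\Poly$. Continuity of $a\mapsto\tilde\Gamma_2(a)-K\tilde\Gamma(a)$ (respectively of its sliced version) in the graph norm of $D((-\Lin_V)^{3/2})$, the graph-norm density of $\Poly$, and closedness of the positive cones then transfer the inequality to the full domain. For $a\in\Poly$, I would expand $\tilde\Gamma_2(a)$ using the second equality in \eqref{eq:nc_gamma2_tilde}, which expresses it as
\begin{equ}
    \tilde\Gamma_2(a) = \tfrac12\bigl(\llbracket\Lin_V,\nabla\rrbracket a\bigr)^*\cdot\nabla a + \tfrac12(\nabla a)^*\cdot\bigl(\llbracket\Lin_V,\nabla\rrbracket a\bigr) + \CH(a,a)\;.
\end{equ}

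Next I would insert the commutator identity of Proposition~\ref{prop:commutator}, $\llbracket\Lin_V,\nabla_i\rrbracket a=\sum_j T_{ij,V}\nabla_j a$. Setting $\xi\eqdef\nabla a\in(\Poly^e)^m$, this gives $\llbracket\Lin_V,\nabla\rrbracket a=\vec T_V\xi$. The commutator part of $\tilde\Gamma_2(a)$ is then exactly $\frac12[\xi^*\cdot(\vec T_V\xi)+(\vec T_V\xi)^*\cdot\xi]$, where $\xi^*\cdot\eta$ denotes $\sum_i\xi_i^*\cdot\eta_i$. I would check carefully that the componentwise adjoint and the enveloping product match the conventions in \eqref{eq:full_convexity}, in particular that $(T_{ij,V}\cdot\nabla_j a)^*=(\nabla_j a)^*\cdot(\partial_iD_jV)^*$ with the $\op$-twist in the first factor. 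Since $\tilde\Gamma(a)=\xi^*\cdot\xi$, this yields
\begin{equ}
    \tilde\Gamma_2(a)-K\tilde\Gamma(a) = \Bigl(\tfrac12\bigl[\xi^*\cdot(\vec T_V\xi)+(\vec T_V\xi)^*\cdot\xi\bigr]-K\,\xi^*\cdot\xi\Bigr)+\CH(a,a)\;.
\end{equ}

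For the full case, the bracket is positive in $\CM^e$ by full $K$-convexity applied to $\xi=\nabla a$. The remaining term $\CH(a,a)=\CH^{\mfl}(a,a)+\CH^{\mfr}(a,a)$ is nonnegative, as already noted after its definition. I would justify this as follows: $\Tr(\mathrm{Hess}(a)^*\mathrm{Hess}(a))$ is a sum of terms $y^*y$ in the three-fold tensor algebra, and the slice by the state $\omega$ in one factor is completely positive, hence positive. A sum of positives is positive, proving $\tilde\Gamma_2\geqslant K\tilde\Gamma$ on $\Poly$.

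For the partial case, I would apply $\omega^{\op}\otimes\Id$ to the displayed identity and use \eqref{eq:gamma2_lift} and \eqref{eq:gamma_lift}, which identify the left-hand side with $\Gamma_2(a)-K\Gamma(a)$. The sliced bracket is positive by partial $K$-convexity, and $(\omega^{\op}\otimes\Id)\CH(a,a)\geqslant0$ because the slice map is positive. The main obstacle I anticipate is not conceptual but bookkeeping. One must verify that the Hessian terms land in $\CM^e$ with the correct ordering of factors, so that $\CH^{\mfl}$, which lives naturally with a leftover $\Poly$-factor sliced by $\omega$, really is a positive element of $\Pop^e$. One must also check that the adjoint conventions make the commutator part exactly the Hermitian part of $\xi^*\cdot\vec T_V\xi$, rather than a twisted version of it.
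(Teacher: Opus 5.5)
Your proposal is correct and follows the paper's proof essentially step for step. Both insert Proposition~\ref{prop:commutator} into \eqref{eq:nc_gamma2_tilde}, recognise the commutator terms as the Hermitian part of $(\nabla a)^*\cdot\vec T_V\nabla a$, use $\CH(a,a)\geqslant0$, slice with $\omega^{\op}\otimes\Id$ for the partial case, and pass from $\Poly$ to the full domain by graph-norm density. In the partial case you slice the decomposition (bracket plus $\CH(a,a)$) and then invoke partial convexity on the sliced bracket, rather than slicing the full inequality; this is exactly what the paper's terse ``applying $\omega^{\op}\otimes\Id$'' must mean, since full $K$-convexity is not assumed there.
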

\begin{proof}
    Insert Proposition~\ref{prop:commutator} into \eqref{eq:nc_gamma2_tilde}. For $f=g \in \Poly$, the two commutator terms are the Hermitian part of $(\nabla f)^*\cdot\vec T_V\nabla f$, while $\CH(f,f)\geqslant0$. Equation~\eqref{eq:full_convexity} therefore gives $\tilde\Gamma_2(f)\geqslant K\tilde\Gamma(f)$. Applying $\omega^{\op}\otimes\Id$ and using \eqref{eq:gamma2_lift} proves the partial statement.

    Finally, the full $\tilde{\Gamma}_2$- and $\Gamma_2$-criteria follow by density of $\Poly$.
\end{proof}

We close this section with some examples of fully convex potentials $V$.

\begin{example}\label{ex:quartic_potentials}
    Let $V = (\sum_{i=1}^{m}\indet^{i})^{4}$ and $W = \sum_{i=1}^{m}(\indet^{i})^{4}$. We claim that both $V$ and $W$ are fully $0$-convex.

\begin{equ}
    DV = 4\Big(\sum_{i=1}^{m}\indet^{i}\Big)^{3}\begin{pmatrix}1\\\vdots\\1\end{pmatrix}\;, \quad DW = 4\begin{pmatrix}(\indet^{1})^{3}\\\vdots\\(\indet^{m})^{3}\end{pmatrix}\;.
\end{equ}
We then have
\begin{equs}
    \vec{T}_{V} &= 4\bigg[\Big(\sum_{i=1}^{m}\indet^{i}\Big)^{2} \otimes \bone + \Big(\sum_{i=1}^{m}\indet^{i}\Big) \otimes \Big(\sum_{i=1}^{m}\indet^{i}\Big) + \bone \otimes \Big(\sum_{i=1}^{m}\indet^{i}\Big)^{2}\bigg] \begin{pmatrix} 1 & \cdots & 1\\
\vdots & \ddots & \vdots\\
1 & \cdots & 1 \end{pmatrix}\;,\\
\vec{T}_{W} &= 4\begin{pmatrix} (\indet^{1})^{2} \otimes \bone + \indet^{1} \otimes \indet^{1} + \bone \otimes (\indet^{1})^{2} & & \\
& \ddots & \\
& & (\indet^{m})^{2} \otimes \bone + \indet^{m} \otimes \indet^{m} + \bone \otimes (\indet^{m})^{2} \end{pmatrix}\;.
\end{equs}
We now claim that any element $b \in \CP^{e} $ of the form $b \eqdef a^2 \otimes \bone + a \otimes a + \bone \otimes a^2$ is positive for any self-adjoint $a$. To see this, let $\psi$ be a state on $\CP^{e}$, then
\begin{equs}
    \psi(b) &= \psi(a^2 \otimes 1) + \psi(1 \otimes a^2)  + \psi(a \otimes a) \geqslant \\
& \geqslant \psi(a^2 \otimes 1) + \psi(1 \otimes a^2) -  |\psi(a \otimes a)| \geqslant \\
& \geqslant \psi(a^2 \otimes 1) + \psi(1 \otimes a^2) -  \sqrt{\psi(a^2 \otimes 1) \psi(1 \otimes a^2)} \geqslant \\
& \geqslant \psi(a^2 \otimes 1) + \psi(1 \otimes a^2)- \frac{1}{2}\left( \psi(a^2 \otimes 1) + \psi(1 \otimes a^2) \right) = \\
& = \frac{1}{2}\left( \psi(a^2 \otimes 1) + \psi(1 \otimes a^2) \right) \geqslant 0 \; .
\end{equs}
Here we used the Cauchy--Schwarz and arithmetic--geometric mean inequalities.

Finally, let $b_i = (\indet^{i})^2 \otimes \bone + \indet^{i} \otimes \indet^{i} + \bone \otimes (\indet^{i})^2$ and $b =\left(\sum_{i=1}^{m}\indet^{i}\right)^{2} \otimes \bone + \left(\sum_{i=1}^{m}\indet^{i}\right) \otimes \left(\sum_{i=1}^{m}\indet^{i}\right) + \bone \otimes \left(\sum_{i=1}^{m}\indet^{i}\right)^{2}$. We find that, for any state $\psi$ on $\CP^{e}$ and $\vec{f} \in (\CP^{e})^m$,
\begin{equs}
    \psi(\vec{f}^* \cdot \vec{T}_W \vec{f}) & = \sum_{i} \psi( f_i^* b_i f_i) \geqslant 0 \\
\psi(\vec{f}^* \cdot \vec{T}_V \vec{f}) & = \psi( \bar f^* b \bar f) \geqslant 0
\end{equs}
where $\bar f \eqdef \sum_i f_i$. This holds because $\psi(f_i^* \bigcdot f_i)$ and $\psi(\bar f^* \bigcdot \bar f)$ are positive functionals.

\end{example}

\section{Noncommutative Inequalities}

\subsection{Gradient Estimates}\label{ss:gradest_comm_nc}

\begin{proposition}[Weak Gradient Estimate]\label{prop:comm_gradient_estimate_nc}
The following are equivalent:
\begin{enumerate}
    \item $\Gamma_{2} \geqslant \kappa\,\Gamma$\;;
    \item For all $f \in \Poly$ and $t \geqslant 0$,
    \begin{equ}[eq:comm_gradient_estimate_NC]
        \Gamma(P_{t}f) \leqslant e^{-2\kappa t}\, P_{t}\Gamma(f)\;.
    \end{equ}
\end{enumerate}
\end{proposition}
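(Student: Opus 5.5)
The plan mirrors the commutative Proposition~\ref{prop:comm_gradient_estimate}, using the machinery already set up in Section~\ref{sec:nc_gamma}.

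\emph{(1)$\Rightarrow$(2).} This direction is essentially Lemma~\ref{lem:gamma2_comparison}. Fix $f\in\Poly$ and $t>0$, and set $F(s)=P_{t-s}f$ and $\Psi_\kappa(s)=e^{-2\kappa s}P_s\Gamma(F(s))$. Since $\Poly\subset D((-\Lin_V)^{3/2})$ and this domain is $P_s$-invariant by functional calculus, $F(s)$ lies in the domain where $\Gamma_2$ is defined. The derivative formula \eqref{eq:Psi_deriv} then gives
\[
\Psi_\kappa'(s)=2e^{-2\kappa s}P_s\bigl[\Gamma_2(F(s))-\kappa\Gamma(F(s))\bigr].
\]
This is nonnegative in $\CL^1(\omega)_+$ because $P_s$ is positive and the hypothesis applies to $F(s)$. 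Comparing $\Psi_\kappa(0)$ with $\Psi_\kappa(t)$ yields \eqref{eq:comm_gradient_estimate_NC}.

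The derivative has to be understood in $\CL^1(\omega)$. I would justify it by pairing against an arbitrary $b\in\CM_+$. Then $s\mapsto\omega(b\,\Psi_\kappa(s))$ is a scalar function, and its differentiability follows from:
\begin{itemize}
\item strong continuity of $P_s$ on $\CL^1$, with $(\Lin_V)_1$ as its generator;
\item graph-norm continuity of $\Gamma$ on $D(\CE)$;
\item continuity of $s\mapsto F(s)$ in $D((-\Lin_V)^{3/2})$.
\end{itemize}
Nonnegativity of all these pairings is exactly positivity in $\CL^1(\omega)_+$.

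\emph{(2)$\Rightarrow$(1).} Fix $f\in\Poly$ and consider
\[
R(t)=e^{-2\kappa t}P_t\Gamma(f)-\Gamma(P_tf)\in\CL^1(\omega)_+ .
\]
Here $R(0)=0$. Differentiating at $t=0$, using $\Gamma(f)\in\Poly\subset D(\Lin_V)$ and $P_tf=f+t\Lin_Vf+o(t)$ in the $D(\CE)$ graph norm, gives
\[
R'(0)=-2\kappa\Gamma(f)+\Lin_V\Gamma(f)-\Gamma(\Lin_Vf,f)-\Gamma(f,\Lin_Vf)=2\bigl(\Gamma_2(f)-\kappa\Gamma(f)\bigr).
\]
Since $R(t)/t\geqslant0$ for all $t>0$ and $\CL^1(\omega)_+$ is closed, the limit is nonnegative. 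Equivalently, one can test against states via \eqref{eq:positivity}, since for polynomial $f$ the limit lies in $\CM_{\sa}$. This gives the criterion on $\Poly$, and the Remark following the $\Gamma_2$-criterion definition extends it to $D((-\Lin_V)^{3/2})$ by graph-norm density.

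\emph{Main obstacle.} I expect the main difficulty to be rigor in the second direction. The quadratic term $\Gamma(P_tf)$ must be expanded to first order in $t$ in $\CL^1$. This requires
\[
\|P_tf-f-t\Lin_Vf\|_{D(\CE)}=o(t),
\]
which I would obtain by spectral calculus: for $f\in D((-\Lin_V)^{3/2})$ one has $(-\Lin_V)^{1/2}(P_tf-f)/t\to(-\Lin_V)^{1/2}\Lin_Vf$ in $\CL^2$. Combined with bilinearity and continuity of $\Gamma\colon D(\CE)^2\to\CL^1$, this gives the expansion.
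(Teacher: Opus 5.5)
Your proposal is correct and follows the paper's argument. The forward direction differentiates $\Psi(s)=e^{-2\kappa s}P_s\Gamma(P_{t-s}f)$ and compares endpoints; the converse divides $e^{-2\kappa t}P_t\Gamma(f)-\Gamma(P_tf)\geqslant 0$ by $t$ and lets $t\downarrow 0$. Your added justifications (testing against $b\in\CM_+$, and the $o(t)$ expansion of $P_tf$ in the $D(\CE)$ graph norm via spectral calculus) supply rigor the paper leaves implicit. The one ingredient you use without naming is $\Gamma(P_{t-s}f)\in D\bigl((\Lin_V)_1\bigr)$, which the paper's extension of $\Gamma_2$ to $D\bigl((-\Lin_V)^{3/2}\bigr)$ provides.
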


\begin{proof}
\begin{itemize}
    \item[($\Rightarrow$)]
    For $0 \leqslant s \leqslant t$, let
    \begin{equ}[eq:Psi_comm_NC]
        \Psi(s) \eqdef e^{-2\kappa s}\, P_{s}\big(\Gamma(P_{t-s}f)\big)\;.
    \end{equ}
    Then
    \begin{equ}
        \Psi'(s) = e^{-2\kappa s}\, P_{s}\Big( -2\kappa\,\Gamma(P_{t-s}f) + 2\,\Gamma_{2}(P_{t-s}f) \Big) \geqslant 0 \; . 
    \end{equ}
    Thus, $\Psi(0) \leqslant \Psi(t)$, which gives \eqref{eq:comm_gradient_estimate_NC}.
    \item[($\Leftarrow$)]
    Dividing 
    \begin{equ}
        0 \leqslant e^{-2\kappa t}P_{t}\Gamma(f) - \Gamma(P_{t}f) = \Psi(t)-\Psi(0)
    \end{equ}
    by $2t$ and taking the limit $t \downarrow 0$ yields
    \begin{equ}
        \Gamma_2 (f) - \kappa \Gamma(f) \geqslant 0 \;. 
    \end{equ}
\end{itemize}
\end{proof}

Analogously one proves also the equivalence for the strong gradient estimate. 
\begin{proposition}[Strong Gradient Estimate]\label{prop:comm_gradient_estimate_nc_str}
The following are equivalent:
\begin{enumerate}
    \item $\tilde\Gamma_{2} \geqslant \kappa\,\tilde\Gamma$\;;
    \item For all $f \in \Poly$ and $t \geqslant 0$,
    \begin{equ}[eq:comm_gradient_estimate_NC_str]
        \tilde\Gamma(P_{t}f) \leqslant e^{-2\kappa t}\, P^e_{t}\tilde\Gamma(f)\;.
    \end{equ}
\end{enumerate}
\end{proposition}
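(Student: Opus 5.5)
The plan is to mirror the proof of Proposition~\ref{prop:comm_gradient_estimate_nc}, replacing the sliced interpolation $\Psi$ by the lifted one. Fix $f\in\Poly$ and $t>0$, and for $0\leqslant s\leqslant t$ set
\begin{equ}
    \tilde\Psi(s)\eqdef e^{-2\kappa s}\,P^e_s\bigl(\tilde\Gamma(P_{t-s}f)\bigr)\in\CL^1(\omega^e)\;.
\end{equ}
Then $\tilde\Psi(0)=\tilde\Gamma(P_tf)$ and $\tilde\Psi(t)=e^{-2\kappa t}P^e_t\tilde\Gamma(f)$. The proof of each direction reduces to the sign of $\tilde\Psi'$, which is given by \eqref{eq:Psitilde_deriv}:
\begin{equ}
    \tilde\Psi'(s)=2e^{-2\kappa s}P^e_s\bigl(\tilde\Gamma_2(P_{t-s}f)-\kappa\tilde\Gamma(P_{t-s}f)\bigr)\;.
\end{equ}

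\emph{($\Rightarrow$)} Since $F(s)=P_{t-s}f$ lies in the $P_s$-invariant domain $D((-\Lin_V)^{3/2})$, the criterion $\tilde\Gamma_2\geqslant\kappa\tilde\Gamma$ holds there by the density remark. Complete positivity of $P^e_s$ then gives $\tilde\Psi'(s)\geqslant0$ in $\CL^1(\omega^e)$. To integrate this order inequality, I would pair with an arbitrary positive $B\in\CM^e$. The map $s\mapsto\omega^e(B\tilde\Psi(s))$ is differentiable with nonnegative derivative, so $\omega^e(B(\tilde\Psi(t)-\tilde\Psi(0)))\geqslant0$. The characterisation of $\CL^1(\omega^e)_+$ by pairing with positive elements then yields \eqref{eq:comm_gradient_estimate_NC_str}. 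This is exactly the argument of Lemma~\ref{lem:gamma2tilde_comparison}.

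\emph{($\Leftarrow$)} For $f\in\Poly$, the inequality \eqref{eq:comm_gradient_estimate_NC_str} says $\tilde\Psi(t)-\tilde\Psi(0)\geqslant0$. I would pair again with a positive $B\in\CM^e$, divide by $2t$, and let $t\downarrow0$. The limit of $\frac{1}{2t}\omega^e\bigl(B(\tilde\Psi(t)-\tilde\Psi(0))\bigr)$ should equal $\omega^e\bigl(B(\tilde\Gamma_2(f)-\kappa\tilde\Gamma(f))\bigr)$, which is then $\geqslant0$. To compute it, I would expand
\begin{equ}
    \tilde\Psi(t)-\tilde\Psi(0)=(e^{-2\kappa t}-1)P^e_t\tilde\Gamma(f)+\bigl(P^e_t\tilde\Gamma(f)-\tilde\Gamma(f)\bigr)+\bigl(\tilde\Gamma(f)-\tilde\Gamma(P_tf)\bigr)\;.
\end{equ}
The first term gives $-2\kappa\,\omega^e(B\tilde\Gamma(f))$ by strong continuity of $P^e_t$.

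The remaining two terms require derivatives at $t=0$, and this is the main obstacle. The second term needs $\tilde\Gamma(f)\in\Poly^e\subset D(\Lin^e_V)$, together with the $\CL^1$-symmetry \eqref{eq:tensor_semigroup_L1_symmetry}. The third term needs $\tilde\Gamma(P_tf)$ to be differentiable at $0$ in $\CL^1(\omega^e)$ with derivative $\tilde\Gamma(\Lin_Vf,f)+\tilde\Gamma(f,\Lin_Vf)$. For this I would use that $P_tf\to f$ in $D((-\Lin_V)^{3/2})$ with $(P_tf-f)/t\to\Lin_Vf$ in $D(\overline\nabla)$, since $f\in D(\Lin_V^2)$. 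Bilinearity and graph-norm continuity of $\tilde\Gamma$ on $D(\overline\nabla)$ then give the derivative. Combining the three contributions with the definition \eqref{eq:nc_gamma2_tilde} produces $\omega^e\bigl(B(\tilde\Gamma_2(f)-\kappa\tilde\Gamma(f))\bigr)\geqslant0$ for all positive $B$, so $\tilde\Gamma_2(f)\geqslant\kappa\tilde\Gamma(f)$ on $\Poly$. Density then extends this to $D((-\Lin_V)^{3/2})$, as in the remark following the definition of the lifted criterion.
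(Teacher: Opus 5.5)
Your proposal is correct and follows the same route as the paper. The paper proves this statement ``analogously'' to Proposition~\ref{prop:comm_gradient_estimate_nc}: monotonicity of the lifted interpolation $\tilde\Psi$ via \eqref{eq:Psitilde_deriv} gives ($\Rightarrow$), and dividing $\tilde\Psi(t)-\tilde\Psi(0)\geqslant 0$ by $2t$ and letting $t\downarrow0$ gives ($\Leftarrow$); your pairing with positive $B\in\CM^e$ and explicit derivative at $t=0$ just supply details the paper leaves implicit (one small slip: after dividing by $2t$ the first term contributes $-\kappa\,\omega^e(B\tilde\Gamma(f))$, not $-2\kappa\,\omega^e(B\tilde\Gamma(f))$, which is the value your final combination actually uses).
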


\begin{corollary}[Poincar\'{e} Inequality from \eqref{eq:comm_gradient_estimate_NC}]\label{cor:PI_from_CD_nc}
If $\,\Gamma_{2} \geqslant \kappa\Gamma$ with $\kappa > 0$, then
\begin{equ}
    P_{t}(f^{2}) - (P_{t}f)^{2} \leqslant \frac{1 - e^{-2\kappa t}}{\kappa}\, P_{t}\Gamma(f)\;.
\end{equ}
\end{corollary}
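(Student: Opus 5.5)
We follow the proof of Corollary~\ref{cor:PI_from_CD}, replacing the commutative diffusion identities by the noncommutative product formula \eqref{eq:productL} and the weak gradient estimate of Proposition~\ref{prop:comm_gradient_estimate_nc}. Fix $f\in\Poly$; we may take $f$ self-adjoint, since the statement concerns $f^2$. For the general case one reads $f^2$ as $f^*f$ and $(P_tf)^2$ as $(P_tf)^*(P_tf)$, using Lemma~\ref{lem:generator_star_preserving} to get $(P_tf)^*=P_t(f^*)$. For $0\leqslant s\leqslant t$ we set
\begin{equ}
    \Phi(s)\eqdef P_s\bigl((P_{t-s}f)^*(P_{t-s}f)\bigr)\;,\qquad F(s)\eqdef P_{t-s}f\;,
\end{equ}
so that $\Phi(t)-\Phi(0)=P_t(f^*f)-(P_tf)^*(P_tf)$.

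The first step is to differentiate $\Phi$. Formally,
\begin{equ}
    \Phi'(s)=P_s\Bigl(\Lin_V(F^*F)-(\Lin_VF)^*F-F^*\Lin_VF\Bigr)\;,
\end{equ}
and by \eqref{eq:productL} together with \eqref{eq:gamma_lift} the bracket equals $2\Gamma(F)$. Hence $\Phi'(s)=2P_s\Gamma(P_{t-s}f)$.

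The second step bounds this derivative. Proposition~\ref{prop:comm_gradient_estimate_nc}, applied with time $t-s$, gives $\Gamma(P_{t-s}f)\leqslant e^{-2\kappa(t-s)}P_{t-s}\Gamma(f)$. Since $P_s$ is positive, we may apply it to both sides, and the semigroup property then yields
\begin{equ}
    \Phi'(s)\leqslant 2e^{-2\kappa(t-s)}P_t\Gamma(f)\;.
\end{equ}
Integrating over $s\in[0,t]$ gives $\Phi(t)-\Phi(0)\leqslant\frac{1-e^{-2\kappa t}}{\kappa}P_t\Gamma(f)$. The integral is a Bochner integral in $\CL^1(\omega)$, and the positive cone $\CL^1(\omega)_+$ is closed, so the order inequality passes through the integral.

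The main obstacle is justifying the differentiation of $\Phi$, because $P_{t-s}f$ need not lie in $\Poly$. We handle this with the domain machinery. By Assumption~\ref{ass:standing_symmetry} and spectral calculus, $F(s)\in D((-\Lin_V)^{3/2})\subset D(\Lin_V)$. Corollary~\ref{cor:ProdDomain} then puts $F^*F$ in $D((\Lin_V)_1)$, with graph-norm continuity in $s$, so that $s\mapsto\Phi(s)$ is differentiable in $\CL^1(\omega)$. The identity $\Phi'=2P_s\Gamma(F)$ extends from $\Poly$ by graph-norm density, as in the proof of Lemma~\ref{lem:gamma2_comparison}. Finally, the case of general $f\in D(\overline\nabla)$ follows by density and continuity of both sides.
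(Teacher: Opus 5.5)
Your proposal is correct and is essentially the paper's argument. The paper uses the same interpolation $\Phi(s)=P_s(|P_{t-s}f|^2)$ with $\Phi'(s)=2P_s\Gamma(P_{t-s}f)$, but bounds $\Phi'$ by showing $\Phi''=4P_s\Gamma_2(P_{t-s}f)\geqslant 2\kappa\Phi'$ and running Gr\"onwall backwards from $s=t$; that is exactly the monotonicity of $e^{-2\kappa s}\Phi'(s)$ behind Proposition~\ref{prop:comm_gradient_estimate_nc}, so citing that proposition, as you do, is the same step, and your domain justification via $D((-\Lin_V)^{3/2})$ and Corollary~\ref{cor:ProdDomain} supplies detail the paper's proof leaves implicit.
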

\begin{proof}
    As in \eqref{eq:PhiGamma}, let $\Phi(s) \eqdef P_s \left( |P_{t-s} f|^2 \right) $ for $s \in [0,t]$. From \eqref{eq:productL}, we see that 
    \begin{equs}
        \Phi'(s) &= P_s \left( \Lin_V \left( |P_{t-s}f|^2 \right)\right)  - P_s \left( \left( \Lin_V P_{t-s} f \right)^* P_{t-s} f +(P_{t-s} f )^*  \Lin_V P_{t-s} f \right) = \\
        & = 2 P_s \Gamma( P_{t-s}f ) \; . 
    \end{equs}
    and thus, by a computation analogous to \eqref{eq:Psi_deriv}, $\Phi''(s) = 4P_{s}\Gamma_{2}(P_{t-s}f)$.
    The condition $\Gamma_{2} \geqslant \kappa\,\Gamma$ implies $\Phi''(s) \geqslant
    2\kappa\,\Phi'(s)$ as $P_s$ is positivity preserving.
    
    Therefore, by Gr\"{o}nwall's inequality for the final condition at $s = t$ (the proof of which holds verbatim as the control $2\kappa$ is a scalar), we find
    \begin{equ}
        \Phi'(s) \leqslant e^{2\kappa(s-t)} \Phi'(t) = 2e^{2\kappa(s-t)} P_t \Gamma(f) \;.
    \end{equ}
    Since $\Phi(t) - \Phi(0) = P_{t}(|f|^{2}) - |P_{t}f|^{2}$, integrating yields
    \begin{equ}
        P_{t}(|f|^{2}) - |P_{t}f|^{2} = \int_{0}^{t}\Phi'(s)\,\d s \leqslant \frac{1 - e^{-2\kappa t}}{\kappa}\, P_{t}\Gamma(f)\;. 
    \end{equ}
\end{proof}

\subsection{Poincar\'{e} Inequalities}\label{ss:NPI}

\begin{definition}[Poincar\'{e} Inequality]
    We say that $(P_{t}, \omega)$ satisfies a \emph{noncommutative Poincar\'{e} inequality} with constant $C > 0$ if, for all $f \in \Poly$,
    \begin{equ}
        \Var(f) \eqdef \omega(f^* f)- \big|\omega(f)\big|^2 \leqslant C\,\CE(f) = C \omega(\Gamma(f))\;.
    \end{equ}
\end{definition}

\begin{lemma}[Voiculescu's Universal Free Poincar\'{e} Estimate]
\label{lem:ConstDeriv}
    Set
    \begin{equ}
        R_{\omega}^{2} \eqdef \sum_{i=1}^{m}\|Y_{0}^{i}\|^{2} =\sum_{i=1}^{m}
        \|\digamma_{0}(\indet^{i})\|^{2}\;.
    \end{equ}
    Then, for every $f \in \Poly$,
    \begin{equ}[eq:universal_free_PI]
        \Var(f) \leqslant 2 R_{\omega}^{2}\, \omega\bigl(\Gamma(f)\bigr)\;.
    \end{equ}
    In particular,
    \begin{equ}
        \omega\bigl(\Gamma(f)\bigr)=0  \quad\Longleftrightarrow\quad f\in\C\bone \;.
    \end{equ}
\end{lemma}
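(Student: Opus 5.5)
The plan is to prove \eqref{eq:universal_free_PI} from Voiculescu's telescoping identity for the free difference quotient, combined with the $\CL^2$-isometry of the flip map and boundedness of multiplication by the generators. The estimate should then follow from a single Cauchy--Schwarz step, with no curvature input needed.

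\emph{Step 1 (algebraic identity).} For every $f \in \Poly$ we have, in $\Poly \otimes \Poly$,
\begin{equ}
    f \otimes \bone - \bone \otimes f = \sum_{i=1}^{m} \sum_k \bigl( a_k^{(i)} \indet^{i} \otimes b_k^{(i)} - a_k^{(i)} \otimes \indet^{i} b_k^{(i)} \bigr)\;, \qquad \partial_i f = \sum_k a_k^{(i)} \otimes b_k^{(i)}\;.
\end{equ}
By linearity it suffices to check this on a monomial $\indet^{w}$. There the right-hand side is
\begin{equ}
    \sum_{w = w_1 i w_2} \bigl(\indet^{w_1 i}\otimes \indet^{w_2} - \indet^{w_1}\otimes \indet^{i w_2}\bigr)\;,
\end{equ}
and the sum over the positions of $w$ telescopes to $\indet^{w}\otimes\bone - \bone\otimes\indet^{w}$. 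I will write the right-hand side as $\sum_i \partial_i f \cdot \delta_i$, where $\delta_i$ denotes inserting $\indet^{i}\otimes\bone - \bone\otimes\indet^{i}$ at the inner position.

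\emph{Step 2 (variance as an $\CL^2$ norm).} Expanding, and using that $\omega$ is a state, gives
\begin{equ}
    \|f\otimes\bone - \bone\otimes f\|_{\CL^2(\omega\otimes\omega)}^2 = 2\omega(f^*f) - 2|\omega(f)|^2 = 2\Var(f)\;.
\end{equ}

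\emph{Step 3 (bounding the right-hand side).} Evaluate via $\digamma_0$. Right or left multiplication of an element of $\CL^2(\omega\otimes\omega)$ by $Y_0^i\otimes\bone$ or $\bone\otimes Y_0^i$ is bounded by $\|Y_0^i\|$, since by traciality right multiplication is bounded in $\CL^2$ by the operator norm. Hence
\begin{equ}
    \|\partial_i f\cdot\delta_i\|_{\CL^2} \leqslant 2\|Y_0^i\|\,\|\partial_i f\|_{\CL^2(\omega\otimes\omega)}\;.
\end{equ}
The triangle inequality and Cauchy--Schwarz over $i$ then give
\begin{equ}
    \sqrt{2\Var(f)} \leqslant 2R_\omega \Bigl(\sum_i\|\partial_i f\|^2_{\CL^2}\Bigr)^{1/2}\;.
\end{equ}
Since $\bop$ is isometric from $\CL^2(\omega)$ onto $\CL^2(\omega^{\op})$ (the traces coincide), we have $\sum_i\|\partial_i f\|^2 = \sum_i\|\nabla_i f\|^2_{\CL^2(\omega^e)} = \CE_0(f) = \omega(\Gamma(f))$ by Proposition~\ref{prop:DirichletForm}. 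Squaring gives $\Var(f)\leqslant 2R_\omega^2\,\omega(\Gamma(f))$.

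\emph{Step 4 (kernel).} If $f\in\C\bone$, then $\nabla f=0$, so $\omega(\Gamma(f))=0$. Conversely, $\omega(\Gamma(f))=0$ forces $\Var(f)=0$, that is, $\omega\bigl((f-\omega(f))^*(f-\omega(f))\bigr)=0$. Faithfulness of $\omega$ (inherited from $\hat\rho$) then gives $f=\omega(f)\bone$.

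The main obstacle is bookkeeping in Step 1. One must confirm that the insertion of $\indet^i$ at the inner position matches the convention for $\partial_i$, so that the telescoping is exact. One must also make sure the insertion, after passing to $\CL^2$, is a genuine left or right multiplication in each factor, so that the operator-norm bound in Step 3 applies.
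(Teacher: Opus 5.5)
Your proposal is correct and follows essentially the same route as the paper. The paper writes your Step 1 identity in $\Poly^e$ as $Jf=\sum_i J(\indet^i)\cdot\nabla_i f$ with $Jf=f^{\op}\otimes\bone-\bop\otimes f$, where left multiplication by $J(\indet^i)$ encodes exactly your right/left insertion of $\indet^i$; it then uses $\|Jf\|_2^2=2\Var(f)$, the bound $\|J(\indet^i)\|\leqslant 2\|Y_0^i\|$ together with H\"older and Cauchy--Schwarz, and faithfulness of $\omega$ for the kernel, just as you do.
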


\begin{proof}
    Define the linear map $J\colon\Poly\to\Poly^{e}$ by
    \begin{equ}
        Jf \eqdef f^{\op}\otimes\bone-\bop\otimes f \;.
    \end{equ}
    Expanding its squared norm and using traciality, we obtain
    \begin{equs}[eq:J_variance]
        \|Jf\|_{2,\omega\otimes\omega}^{2} &= \omega(f f^{\ast}) + \omega(f^{\ast}f) - \omega(f^{\ast})\omega(f) - \omega(f)\omega(f^{\ast}) = \\
        &= 2\left( \omega(f^{\ast}f)-|\omega(f)|^{2} \right) \;.
    \end{equs}
    We claim that
    \begin{equ}[eq:J_telescoping]
        J f =
        \sum_{i=1}^{m} J(\indet^i) \nabla_i f
        \;.
    \end{equ}

    By linearity, it suffices to prove this for a monomial $f = \indet^{i_1}\cdots\indet^{i_d}$. For $r\in\{0,\dots,d\}$, set $P_r \eqdef \indet^{i_1}\cdots\indet^{i_r} $,  $ S_r \eqdef \indet^{i_{r+1}}\cdots\indet^{i_d}$, and $P_0=S_d=\bone$. The contribution of the $r^{\text{th}}$ letter to
    $\nabla_{i_r}f$ is $P_{r-1}^{\op}\otimes S_r$. For these we have
    \begin{equs}
        J(\indet^i)  \left(P_{r-1}^{\op}\otimes S_r\right) &= P_r^{\op}\otimes S_r -  P_{r-1}^{\op}\otimes\indet^{i_r}S_r = \\
        &= P_r^{\op}\otimes S_r - P_{r-1}^{\op}\otimes S_{r-1} \;.
    \end{equs}
    Summing over all positions in $f$ yields
    \begin{equs}
        \sum_{i=1}^{m} J(\indet^i) \nabla_i f &= \sum_{r=1}^{d}\left( P_r^{\op}\otimes S_r - P_{r-1}^{\op}\otimes S_{r-1}\right) = \\
        &= P_d^{\op}\otimes S_d - P_0^{\op}\otimes S_0 = f^{\op}\otimes\bone-\bop\otimes f = J f\;.
    \end{equs}
    This proves \eqref{eq:J_telescoping}.

    After evaluation at $Y_0$, the element $J \indet^i$ becomes $ (Y_0^i)^{\op}\otimes\bone  - \bop\otimes Y_0^i$. Hence
    \begin{equ}
        \|J \indet^i\| \leqslant 2\|Y_0^i\| \;.
    \end{equ}
    By H\"older's inequality, we thus obtain
    \begin{equs}
        \|Jf\|_{\CL^2(\omega\otimes\omega)} &\leqslant \sum_{i=1}^{m} \|J(\indet^i)\nabla_i f\|_{\CL^2(\omega\otimes\omega)}\leqslant \sum_{i=1}^{m} \|J(\indet^i)\|\|\nabla_i f\|_{\CL^2(\omega\otimes\omega)} \leqslant \\
        &\leqslant 2\sum_{i=1}^{m} \|Y_0^i\| \|\nabla_i f\|_{\CL^2(\omega\otimes\omega)} \leqslant 2 R_{\omega} \left( \sum_{i=1}^{m} \|\nabla_i f\|_{\CL^2(\omega\otimes\omega)}^2\right)^{\frac{1}{2}} \;.
    \end{equs}
    From \eqref{eq:gamma_lift},
    \begin{equ}
        \sum_{i=1}^{m} \|\nabla_i f\|_{\CL^2(\omega\otimes\omega)}^2 = (\omega\otimes\omega)\bigl(\widetilde{\Gamma}(f)\bigr) =   \omega\bigl(\Gamma(f)\bigr) \;.
    \end{equ}
    Squaring the preceding estimate and using
    \eqref{eq:J_variance}, we find
    \begin{equ}
        2\left(
            \omega(f^{\ast}f)-|\omega(f)|^{2}
        \right)
        \leqslant
        4R_{\omega}^{2}\,
        \omega\bigl(\Gamma(f)\bigr)
        \;,
    \end{equ}
    which proves \eqref{eq:universal_free_PI}.

    If $\omega(\Gamma(f))=0$, the estimate gives
    \begin{equ}
        \omega\left(\bigl(f-\omega(f)\bone\bigr)^{\ast}\bigl(f-\omega(f)\bone\bigr)\right) = 0 \;.
    \end{equ}
    Faithfulness of $\omega$ therefore implies
    $f=\omega(f)\bone$. Conversely, every free difference quotient
    annihilates constants.
\end{proof}

\begin{remark}[Sharper Row-Operator Constant]
    Let
    \begin{equ}
        T(\xi_1,\dots,\xi_m)
        \eqdef
        \sum_{i=1}^{m} J(\indet^i)\cdot\xi_i
        \;.
    \end{equ}
    Since each $J(\indet^i)$ is self-adjoint, we have 
    \begin{equ}
        TT^* = \sum_{i = 1}^m J(\indet^i)^2 
    \end{equ}
    and therefore
    \begin{equ}
        \|T\|^{2}
        =
        \left\|
            \sum_{i=1}^{m} J(\indet^i)^{2}
        \right\|
        \;.
    \end{equ}
    Thus the constant $2R_{\omega}^{2}$ in
    \eqref{eq:universal_free_PI} can be replaced by
    \begin{equ}
        C_{\omega}^{\mathrm{row}} \eqdef \frac{1}{2} \left\|\sum_{i=1}^{m}J(\indet^i)^{2} \right\|_{\infty}  \leqslant 2\left\| \sum_{i=1}^{m}(Y_0^i)^2 \right\|_{\infty}\leqslant 2R_{\omega}^{2} \;.
    \end{equ}
\end{remark}

\begin{corollary}[Kernel of \TitleEquation{\overline{\nabla}}{Free Difference quotient}]
\label{cor:closed_free_difference_quotient}
    For all $f\in D(\overline{\nabla})$,
    \begin{equ}[eq:closed_universal_free_PI]
        \|f-\omega(f)\bone\|_{L^{2}(\omega)}^{2} \leqslant 2R_{\omega}^{2}  \|\overline{\nabla}f\|_{\CL^{2}(\omega^e)^{m}}^{2} \;.
    \end{equ}
    Consequently,
    \begin{equ}
        \ker\overline{\nabla} = \C\bone \;.
    \end{equ}
\end{corollary}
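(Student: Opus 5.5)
The plan is to extend the estimate \eqref{eq:universal_free_PI} of Lemma~\ref{lem:ConstDeriv} from $\Poly$ to $D(\overline{\nabla})$ by a density argument, and then read off the kernel from the extended inequality.

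First, rewrite \eqref{eq:universal_free_PI} in norm form. For $f \in \Poly$, we have $\Var(f) = \|f-\omega(f)\bone\|_{L^2(\omega)}^2$. By \eqref{eq:energy_gradient_identity}, $\omega(\Gamma(f)) = \|\nabla f\|^2_{\CL^2(\omega^e)^m}$. So \eqref{eq:closed_universal_free_PI} holds on $\Poly$.

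Next, take $f \in D(\overline{\nabla})$. By definition of the closure $\overline\nabla$ (Proposition~\ref{prop:DirichletForm}), there is a sequence $f_n \in \Poly$ with $f_n \to f$ in $\CL^2(\omega)$ and $\nabla f_n \to \overline\nabla f$ in $\CL^2(\omega^e)^m$. Since $\omega$ is a state, $|\omega(f_n)-\omega(f)| \leqslant \|f_n - f\|_{L^2(\omega)}$. Hence $f_n - \omega(f_n)\bone \to f-\omega(f)\bone$ in $L^2(\omega)$. Both sides of the inequality are continuous along this sequence, so the estimate passes to the limit.

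For the kernel, suppose $\overline\nabla f = 0$. The inequality then gives $\|f-\omega(f)\bone\|_{L^2(\omega)} = 0$, so $f = \omega(f)\bone$ as elements of $\CL^2(\omega)$. Conversely, $\bone \in \Poly$ and $\nabla \bone = 0$, because each $\partial_i$ annihilates the empty monomial; hence $\C\bone \subset \ker\overline\nabla$.

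No step is a real obstacle. The only point needing care is that $D(\overline\nabla)$ is exactly the graph-norm closure of $\Poly$, so that the approximating sequence exists. This is built into the definition of the closure, and is also recorded as $D(\CE) = D(\overline\nabla)$ with $\Poly$ a form core in Proposition~\ref{prop:DirichletForm}.
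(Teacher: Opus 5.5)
Your proposal is correct and follows the same route as the paper. Both arguments approximate $f$ by polynomials in the graph norm of $\overline{\nabla}$, use $\omega(f_n)\to\omega(f)$ to pass to the limit in \eqref{eq:universal_free_PI}, and then read off the kernel; you just spell out the identifications $\Var(f)=\|f-\omega(f)\bone\|_{L^2(\omega)}^2$, $\omega(\Gamma(f))=\|\nabla f\|^2_{\CL^2(\omega^e)^m}$, and the inclusion $\C\bone\subset\ker\overline{\nabla}$, which the paper leaves implicit.
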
 
\begin{proof}
    Let $f_n\in\Poly$ converge to $f$ in the graph norm of
    $\overline{\nabla}$. Then $\omega(f_n)\to\omega(f)$, and passing
    to the limit in \eqref{eq:universal_free_PI} proves
    \eqref{eq:closed_universal_free_PI}. The assertion concerning the
    kernel follows immediately.
\end{proof}

\begin{definition}[Noncommutative Ergodicity]
    We say that a pair $(P_t, \omega)$ is ergodic if for all $f \in \CL^2(\omega)$
    \begin{equ}
        \lim_{t \to \infty} P_t f  = \omega(f) \bone \quad\text{in }\CL^2(\omega)\;.
    \end{equ}
\end{definition}

\begin{proposition}
    Whenever $\omega$ is stationary w.r.t.\ $P_t$ and satisfies \eqref{eq:comm_gradient_estimate_NC} with $\kappa > 0$, the pair is ergodic.
\end{proposition}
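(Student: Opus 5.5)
The plan is to follow the commutative argument of Proposition~\ref{prop:CD_implies_PI}. First I would use the weak gradient estimate to get a global Poincar\'e inequality. Then I would turn that Poincar\'e inequality into exponential $\CL^2$-decay by the spectral theorem.

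\emph{Step 1 (reduction).} Since $P_t$ is an $\CL^2(\omega)$ contraction and $\omega(P_t f)=\omega(f)$ by \eqref{eq:standing_invariance}, the map $f\mapsto P_tf-\omega(f)\bone$ is uniformly bounded in $t$. It therefore suffices to prove convergence on the dense subspace $\Poly$; a standard $\varepsilon/3$ argument then extends it to all of $\CL^2(\omega)$.

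\emph{Step 2 (Poincar\'e inequality).} Fix $f\in\Poly$ and apply Corollary~\ref{cor:PI_from_CD_nc}, which follows from \eqref{eq:comm_gradient_estimate_NC} via Proposition~\ref{prop:comm_gradient_estimate_nc}. Then apply $\omega$ to both sides, which preserves order since $\omega$ is a state. Invariance gives $\omega(P_t(f^*f))=\omega(f^*f)$ and $\omega(P_t\Gamma(f))=\omega(\Gamma(f))=\CE(f)$, so
\begin{equ}
\omega(f^*f)-\|P_tf\|_{\CL^2(\omega)}^2\leqslant \frac{1-e^{-2\kappa t}}{\kappa}\,\CE(f)\leqslant \frac{1}{\kappa}\CE(f)\;.
\end{equ}
The corollary is written with $f^2$, so I would read it for $f^*f$, as its proof does. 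Here I would also dispose of the term $\|P_tf\|^2$ without already knowing ergodicity. Instead, I would prove the Poincar\'e inequality $\Var(f)\leqslant \kappa^{-1}\CE(f)$ more directly. Apply the gradient estimate to $F=P_sf$ and integrate the identity $\frac{d}{ds}\|P_sf\|_2^2=-2\CE(P_sf)$. Combined with $\CE(P_sf)\leqslant e^{-2\kappa s}\CE(f)$, this gives $\|f\|^2-\|P_tf\|^2\leqslant \kappa^{-1}\CE(f)$. That inequality is not the Poincar\'e inequality yet. The cleaner route is spectral.

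\emph{Step 3 (spectral gap).} The weak gradient estimate, integrated against $\omega$, gives $\CE(P_tf)\leqslant e^{-2\kappa t}\CE(f)$ for all $f\in D(\CE)$. Write $-\Lin_V=\int\lambda\,dE_\lambda$ with spectral measure $\mu_f$. This becomes $\int\lambda e^{-2\lambda t}\,d\mu_f\leqslant e^{-2\kappa t}\int\lambda\,d\mu_f$ for all $t$ and all $f$, which forces $\sigma(-\Lin_V)\cap(0,\kappa)=\emptyset$. To see this, take $f$ to be a spectral projection onto $(0,\kappa-\delta]$ of a vector and let $t\to\infty$. The kernel of $\Lin_V$ equals $\ker\overline\nabla=\C\bone$ by \eqref{eq:generator_gradient_factorisation} and Corollary~\ref{cor:closed_free_difference_quotient}. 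Hence for every $f\in\CL^2(\omega)$,
\begin{equ}
\|P_tf-\omega(f)\bone\|_{\CL^2(\omega)}\leqslant e^{-\kappa t}\|f-\omega(f)\bone\|_{\CL^2(\omega)}\to0\;.
\end{equ}
This makes Step~1 unnecessary, and Step~2 serves only as motivation.

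The main obstacle is Step~3. It is the only place where an operator-valued estimate must be turned into a scalar spectral statement. The danger is the passage from the estimate on $\Poly$ to spectral vectors that need not lie in $\Poly$. I would handle this with graph-norm density of $\Poly$ in $D(\CE)$ and the continuity of $\CE$, as in the remark following Proposition~\ref{prop:DirichletForm}.
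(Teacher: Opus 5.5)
Your argument is correct, but it uses a different mechanism from the paper's proof.

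\textbf{The paper's route.} It takes $f\in D(\overline{\nabla})$ and $f'=f-\omega(f)\bone$. Integrating the weak gradient estimate against the invariant state gives $\omega(\Gamma(P_tf'))\leqslant e^{-2\kappa t}\CE(f)$. Since $\omega(P_tf')=0$, Voiculescu's universal estimate \eqref{eq:closed_universal_free_PI} turns this energy decay directly into $\|P_tf'\|_{\CL^2(\omega)}^2\leqslant 2R_\omega^2e^{-2\kappa t}\CE(f)\to0$. Density of $D(\overline{\nabla})$ in $\CL^2(\omega)$ then finishes the proof.

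\textbf{Your route.} You feed the same energy decay $\CE(P_tf)\leqslant e^{-2\kappa t}\CE(f)$ into the spectral theorem for the self-adjoint operator $-\Lin_V$, and obtain $\sigma(-\Lin_V)\cap(0,\kappa)=\emptyset$. You use the free structure only qualitatively, through $\ker\Lin_V=\ker\overline{\nabla}=\C\bone$. The paper itself derives that kernel identity from \eqref{eq:closed_universal_free_PI} in Corollary~\ref{cor:closed_free_difference_quotient}.

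\textbf{What each buys.} The paper's argument is shorter and avoids spectral calculus. However, its rate is available only for $f\in D(\CE)$. It also carries the constant $R_\omega$, which reflects the operator norms of the generators rather than the curvature. Your argument gives more:
\begin{itemize}
\item the uniform bound $\|P_tf-\omega(f)\bone\|_{\CL^2(\omega)}\leqslant e^{-\kappa t}\|f-\omega(f)\bone\|_{\CL^2(\omega)}$ on all of $\CL^2(\omega)$, with no density step;
\item from the same spectral picture, the Poincar\'{e} inequality $\Var(f)\leqslant\kappa^{-1}\CE(f)$ of Proposition~\ref{prop:CD_implies_PI_nc}, without a separate interpolation argument.
\end{itemize}

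\textbf{For the write-up.} You can drop Steps 1 and 2 entirely. Also, on the spectral subspace of $(0,\kappa-\delta]$ a single $t>0$ already gives the contradiction, since there $\int\lambda e^{-2\lambda t}\,d\mu_f\geqslant e^{-2(\kappa-\delta)t}\int\lambda\,d\mu_f$. So no limit $t\to\infty$ is needed.
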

\begin{proof}
    Let $f \in D(\overline{\nabla})$. For $f = \lambda \bone$ ergodicity is trivial, thus we assume that $f$ is not a multiple of the identity. Then $f' \eqdef f - \omega(f)\bone$ is orthogonal to the multiples of the identity and, by Corollary~\ref{cor:closed_free_difference_quotient}, $\Gamma(f')\neq0$. It follows by Proposition~\ref{prop:comm_gradient_estimate_nc} that
    \begin{equ}
        \lim_{t \to \infty} \omega\otimes\omega\left( \tilde\Gamma(P_t f' ) \right) = \lim_{t \to \infty} \omega\left( \Gamma(P_t f' ) \right) \leqslant \lim_{t \to \infty} e^{-2 \kappa t } \omega\left( \Gamma(f') \right) = \lim_{t \to \infty} e^{-2 \kappa t } \CE(f) = 0 \; . 
    \end{equ}
    Since $\omega(P_tf')=0$, the closed universal estimate \eqref{eq:closed_universal_free_PI} gives
    \begin{equ}
        \|P_tf'\|_{\CL^2(\omega)}^2\leqslant 2R_\omega^2\,\omega\bigl(\Gamma(P_tf')\bigr)\xrightarrow{t\to\infty}0 \;.
    \end{equ}
    Thus,
    \begin{equs}
        \lim_{t\to \infty} \omega\left( |P_t f|^2 \right) &= \lim_{t \to \infty} \omega \left( \left( \omega(f) \bone + P_t f' \right)^*\left( \omega(f) \bone + P_t f' \right)  \right) = \\
        & = |\omega(f)|^2 + \lim_{t \to \infty}  2\Re \Big( \overline{\omega(f)} \underbrace{\omega(P_t f')}_{ = \omega(f') = 0} \Big)  + \lim_{t \to \infty} \omega\left( \left| P_t f' \right|^2 \right) = |\omega(f)|^2 \; . 
    \end{equs}
    By density this extends to all $f \in \CL^2(\omega)$.
\end{proof}

\begin{proposition}\label{prop:CD_implies_PI_nc}
    Suppose that the semigroup $(P_t)_t$ satisfies the weak gradient estimate with constant $K > 0$ and is ergodic.
    Then $P_{t}$ satisfies the Poincar\'{e} inequality with constant $1/K$.
\end{proposition}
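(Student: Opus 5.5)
Following the commutative template of Proposition~\ref{prop:CD_implies_PI}, I would first prove the local Poincar\'{e} inequality along the semigroup and then let $t\to\infty$ using ergodicity. The key caveat is that Corollary~\ref{cor:PI_from_CD_nc} is stated under the $\Gamma_2$-criterion, whereas here only the weak gradient estimate is assumed. So I will rederive the local inequality directly from \eqref{eq:gradientestimate1}, which removes the need for $\Gamma_2$.

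Fix $f\in\Poly$ and $t>0$, and set $\Phi(s)\eqdef P_s\bigl(|P_{t-s}f|^2\bigr)$ for $s\in[0,t]$. As in the proof of Corollary~\ref{cor:PI_from_CD_nc}, the product formula \eqref{eq:productL} gives $\Phi'(s)=2P_s\Gamma(P_{t-s}f)$. I would justify this derivative in $\CL^1(\omega)$ using Corollary~\ref{cor:ProdDomain} and the fact that $P_{t-s}f\in D(\Lin_V)$.

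Next apply the weak gradient estimate to $a=P_{t-s}f$ at time $s$: $\Gamma(P_s P_{t-s}f)\leqslant e^{-2Ks}P_s\Gamma(P_{t-s}f)$. This gives the lower bound $P_s\Gamma(P_{t-s}f)\geqslant e^{2Ks}\Gamma(P_tf)$, which is the wrong direction for an upper bound on $P_t(|f|^2)-|P_tf|^2$. So I would not work with the local variance. Instead I would integrate against $\omega$, which is all that the global Poincar\'{e} inequality needs. By invariance, $\omega(\Phi'(s))=2\omega\bigl(\Gamma(P_{t-s}f)\bigr)$. Applying the weak gradient estimate to $f$ at time $t-s$ and using invariance again gives
\[
\omega\bigl(\Gamma(P_{t-s}f)\bigr)\leqslant e^{-2K(t-s)}\,\omega\bigl(P_{t-s}\Gamma(f)\bigr)=e^{-2K(t-s)}\,\CE(f).
\]
Since $\omega(\Phi(t))-\omega(\Phi(0))=\omega(|f|^2)-\omega(|P_tf|^2)$, integrating over $s\in[0,t]$ yields
\[
\omega(|f|^2)-\|P_tf\|^2_{\CL^2(\omega)}\leqslant\frac{1-e^{-2Kt}}{K}\,\CE(f).
\]
A more direct alternative avoids $\Phi$ altogether: $\frac{\d}{\d s}\|P_sf\|_2^2=-2\CE(P_sf)$ by \eqref{eq:energy_gradient_identity}, and $\CE(P_sf)\leqslant e^{-2Ks}\CE(f)$ by the same argument. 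Integrating from $0$ to $t$ gives the identical bound, and I would probably use this version since it only needs the Dirichlet form.

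Finally, let $t\to\infty$. Ergodicity gives $P_tf\to\omega(f)\bone$ in $\CL^2(\omega)$, so $\|P_tf\|_2^2\to|\omega(f)|^2$ and the inequality becomes $\Var(f)\leqslant K^{-1}\CE(f)$. The main obstacle is the rigorous differentiation of $s\mapsto\|P_sf\|_2^2$, or of $\omega(\Phi(s))$. In the $\CE$-based route this is standard spectral calculus for the self-adjoint generator $\Lin_V$, since $f\in\Poly\subset D(\Lin_V)$, so I expect it to be unproblematic. The extension from $\Poly$ to $D(\CE)$, if desired, follows by graph-norm density.
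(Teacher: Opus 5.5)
Your proof is correct and is essentially the paper's argument: the paper also differentiates $s\mapsto P_s\bigl((P_{t-s}f)^*(P_{t-s}f)\bigr)$, bounds the derivative using the weak gradient estimate applied to $f$ at time $t-s$, applies $\omega$ together with invariance, and lets $t\to\infty$ using ergodicity. Your $\CE$-based variant is the same computation with $\omega$ applied first, and it has the small advantage of needing only spectral calculus rather than the product formula.

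One correction to your commentary: the local variance does not go the wrong way. Applying the weak estimate to $f$ at time $t-s$ and then the positive map $P_s$ gives $P_s\Gamma(P_{t-s}f)\leqslant e^{-2K(t-s)}P_t\Gamma(f)$ in operator order, which is exactly the bound the paper uses before integrating against $\omega$.
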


\begin{proof}
    For $f \in D(\Lin_V)$, let $\alpha(s) \eqdef P_{s}\left((P_{t-s}f)^*(P_{t-s}f)\right) \eqdef P_s \left( f(s)^* f(s) \right)$.
    Then, by the product formula and the weak gradient estimate,
    \begin{equs}
        \alpha'(s) &= P_s \left( \Lin_V \left( f(s)^* f(s) \right) - \left(\Lin_V f(s)\right)^* f(s) - f(s)^* \Lin_V f(s) \right) =\\
        &=2P_s\Gamma(P_{t-s}f) \leqslant 2e^{-2K(t-s)}P_sP_{t-s}\Gamma(f) = 2e^{-2K(t-s)}P_t\Gamma(f)
    \end{equs}
    for all $s \in [0,t]$.
    Applying $\omega$ to both sides, and integrating we obtain 
    \begin{equ}
        \omega\left( |f|^2 \right) - \omega\left(  \left|P_t f\right|^2 \right) = \omega\left( P_t |f|^2 \right) - \omega\left(  \left|P_t f\right|^2 \right) \leqslant \frac{1-e^{-2Kt}}{K}\,\omega(P_t\Gamma(f)) = \frac{1-e^{-2Kt}}{K}\,\omega(\Gamma(f))\;.
    \end{equ}
    Here we used the invariance of $\omega$. The second term on the left-hand side converges to $ \omega( |\omega(f) \bone |^2 ) = |\omega(f)|^2 $ by ergodicity as $t \uparrow \infty$.
    Thus, taking this limit gives the result on the dense set $D(\Lin_V)$. By form-core approximation, it extends to all $f \in D(\CE)$.
\end{proof}

\subsection{Logarithmic Sobolev Inequalities}\label{ss:LSI_NC}

The Poincar\'{e} inequality obtained in the preceding subsection controls the quadratic entropy. We now turn to the logarithmic entropy and prove a modified logarithmic Sobolev inequality. The argument has two main ingredients. First, we pass from the completed order-valued gradient estimate to a logarithmic-mean gradient estimate. We then use a variational characterisation of Fisher information to obtain exponential Fisher-information decay and integrate the entropy-dissipation identity.

We retain the standing symmetry and core Assumption~\ref{ass:standing_symmetry}, the closed energy structure of Proposition~\ref{prop:DirichletForm}, and the notation $P_t=e^{t\Lin_V}$. 

We make indirect use of the preceding subsection, by assuming that the closed Poincar\'{e} estimate
\begin{equ}[eq:closed_PI_for_MLSI]
    \|a-\omega(a)\bone\|_{\CL^2(\omega)}^2 \leqslant C_{\mathrm P}\, \CE(a) \; ,
\end{equ}
holds for some $C_{\mathrm P}<\infty$ for all $a \in D(\CE)$. In particular, one may always take $C_{\mathrm P}=C_{\omega}^{\mathrm{row}}$ and, when the weak gradient estimate holds with $K>0$, one may take $C_{\mathrm P}=C_{\omega}^{\mathrm{row}}\wedge K^{-1}$.

We start by proving a simple statement that allows us to take the logarithm of elements of $D(\overline{\nabla})$ and a corresponding chain rule. 
\begin{lemma}[Logarithmic Chain Rule]\label{lem:logarithmic_chain_rule}
    Let $\rho\in D(\overline{\nabla})$ satisfy
    \begin{equ}
        \eps\bone\leqslant\rho\leqslant R\bone
    \end{equ}
    for some $\eps,R>0$. Then $\log(\rho)\in D(\overline{\nabla})$ and
    \begin{equ}[eq:logarithmic_chain_rule]
        \overline{\nabla}\log(\rho)=\left(\overline{\nabla}\rho\right)\log^{[1]}(\rho^{\op}\otimes\bone,\bone\otimes\rho) \;,
    \end{equ}
    where
    \begin{equ}
        \logd (x,y)\eqdef\begin{cases}\frac{\log x-\log y}{x-y} \;, & \text{if }x\neq y \;,\\
        \frac{1}{x} \;, & \text{if }x=y \;.
        \end{cases}
    \end{equ}
\end{lemma}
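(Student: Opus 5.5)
We first prove the identity for polynomials and then pass to the closure by approximation.

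\emph{Polynomial case.} Let $p\in\Poly$ be self-adjoint. The Leibniz rule for the derivation $\nabla_i$ gives, for every $n$,
\begin{equ}
  \nabla_i(p^n)=\sum_{k=0}^{n-1}p^k\cdot(\nabla_ip)\cdot p^{n-1-k}
  =(\nabla_ip)\,(x^n)^{[1]}\bigl(p^{\op}\otimes\bone,\bone\otimes p\bigr)\;,
\end{equ}
where $(x^n)^{[1]}(x,y)=\sum_k x^ky^{n-1-k}$ is the divided difference. Here we use that left multiplication by $p$ on the $\CM^{\op}$ factor is multiplication by $p^{\op}\otimes\bone$ in $\CM^e$, and similarly on the right. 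The two elements $p^{\op}\otimes\bone$ and $\bone\otimes p$ commute, so the joint functional calculus makes sense. By linearity the identity holds for every polynomial function $g$:
\begin{equ}
  \nabla_i g(p)=(\nabla_ip)\,g^{[1]}\bigl(p^{\op}\otimes\bone,\bone\otimes p\bigr)\;.
\end{equ}

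\emph{Density step.} Take $\rho_n\in\Poly$, self-adjoint, with $\rho_n\to\rho$ in the graph norm of $\overline\nabla$. Self-adjointness is allowed by replacing $\rho_n$ with its real part: this preserves graph convergence, because $\nabla(a^*)$ is the $\iota^e$-adjoint of $\nabla a$ and $\iota^e$ is isometric on $\CL^2(\omega^e)$.

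The main obstacle is that $\rho_n$ need not satisfy the spectral bounds $\eps\leqslant\rho_n\leqslant R$, while $\log$ is not polynomial. To handle this, fix a $C^2$ function $h$ with bounded first and second derivatives, equal to $\log$ on $[\eps,R]$. We want to show that $h(\rho_n)\in D(\overline\nabla)$ with a chain rule, and that the right-hand sides converge.

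First we extend the chain rule from polynomials $g$ to $h$. Approximate $h$ by polynomials $g_k$ in $C^1$ uniformly on the compact spectrum of $\rho_n$; then $g_k^{[1]}\to h^{[1]}$ uniformly on the square. The chain rule for $g_k$ passes to the limit in $\CL^2$, and closedness of $\overline\nabla$ then gives $h(\rho_n)\in D(\overline\nabla)$ with the analogous formula.

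Next we pass $n\to\infty$. We have $h(\rho_n)\to h(\rho)=\log\rho$ in $\CL^2$, since $h$ is Lipschitz and the noncommutative Lipschitz estimate $\|h(a)-h(b)\|_2\leqslant\|h'\|_\infty\|a-b\|_2$ holds for self-adjoint $a,b$. For the gradient we split
\begin{equ}
  (\nabla\rho_n)\,h^{[1]}(\rho_n^{\op}\otimes\bone,\bone\otimes\rho_n)-(\overline\nabla\rho)\,h^{[1]}(\rho^{\op}\otimes\bone,\bone\otimes\rho)
\end{equ}
into two differences. In the first, $\overline\nabla\rho_n-\overline\nabla\rho\to0$ in $\CL^2$ and $\|h^{[1]}\|_\infty\leqslant\|h'\|_\infty$ bounds the operator factor. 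In the second, $h^{[1]}(\rho_n^{\op}\otimes\bone,\bone\otimes\rho_n)\to h^{[1]}(\rho^{\op}\otimes\bone,\bone\otimes\rho)$ in the strong operator topology on $\CL^2(\omega^e)$, and this applies to the fixed vector $\overline\nabla\rho$. The strong convergence holds because $\rho_n^{\op}\otimes\bone\to\rho^{\op}\otimes\bone$ in $\CL^2$, hence in measure, and $h^{[1]}$ is a bounded continuous function. By a standard approximation of $h^{[1]}$ by finite sums $\sum_j u_j(x)v_j(y)$ (for instance Fourier expansion of a $C^2$ extension), this reduces to strong convergence of $u(\rho_n)$ for bounded continuous $u$, which is the classical continuity of the functional calculus.

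Closedness of $\overline\nabla$ then yields $\log\rho\in D(\overline\nabla)$. Finally, $h^{[1]}=\logd$ on the joint spectrum of $(\rho^{\op}\otimes\bone,\bone\otimes\rho)$, which is contained in $[\eps,R]^2$, and this gives \eqref{eq:logarithmic_chain_rule}.
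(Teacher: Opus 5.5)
Your proof is correct in outline but takes a different route from the paper's. The paper never approximates $\rho$. It sets $x\eqdef\bone-\rho/R$, so that $0\leqslant x\leqslant q\bone$ with $q=1-\eps/R<1$, and expands $\log\rho=(\log R)\bone-\sum_{n\geqslant1}x^n/n$. It then applies the Leibniz rule to $\overline\nabla(x^n)$ directly, which gives $\frac1n\|\overline\nabla(x^n)\|_{\CL^2}\leqslant q^{n-1}\|\overline\nabla x\|_{\CL^2}$. Closedness and summation of the resulting bivariate power series finish the proof.

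That route is short and gives geometric convergence, with no auxiliary $h$ and no continuity of the functional calculus. However, applying Leibniz to $x\in D(\overline\nabla)\cap\CM\setminus\Poly$ presupposes that $D(\overline\nabla)\cap\CM$ is an algebra on which $\overline\nabla$ is a derivation. This is standard, but the paper takes it for granted. Your route uses the Leibniz rule only on $\Poly$. It also proves more: $\overline\nabla h(\rho)=(\overline\nabla\rho)\,h^{[1]}(\rho^{\op}\otimes\bone,\bone\otimes\rho)$ for any self-adjoint $\rho\in D(\overline\nabla)$ and any $C^2$ function $h$ with bounded $h',h''$. The price is the limit $n\to\infty$, where the approximants $\rho_n$ are not uniformly bounded in operator norm.

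That limit is the one step whose justification you should tighten. Your claim that right multiplication by $h^{[1]}(\rho_n^{\op}\otimes\bone,\bone\otimes\rho_n)$ converges strongly is true. But a Fourier expansion of an extension of $h^{[1]}$ does not give approximation by finite sums $\sum_j u_j(x)v_j(y)$ uniformly on the joint spectra, since those spectra are not uniformly bounded.

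There are two fixes:
\begin{itemize}
\item Add a tightness cutoff, using $\omega(\mathbf{1}_{\{|\rho_n|>M\}})\leqslant\|\rho_n\|_2^2/M^2$.
\item More simply, use that $h^{[1]}(x,y)=\int_0^1h'(tx+(1-t)y)\,\d t$ is Lipschitz in each variable with constant $\|h''\|_\infty/2$. Because $\rho^{\op}\otimes\bone$ commutes with $\bone\otimes\rho_n$, you can insert the intermediate term $h^{[1]}(\rho^{\op}\otimes\bone,\bone\otimes\rho_n)$. In each difference, disintegrate over the spectral measure of the frozen variable and apply your one-variable $\CL^2$ Lipschitz estimate. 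This gives $\|h^{[1]}(\rho_n^{\op}\otimes\bone,\bone\otimes\rho_n)-h^{[1]}(\rho^{\op}\otimes\bone,\bone\otimes\rho)\|_{\CL^2(\omega^e)}\leqslant\|h''\|_\infty\|\rho_n-\rho\|_{\CL^2(\omega)}$. Test against bounded vectors and use the uniform bound $\|h'\|_\infty$ to get strong convergence on $\CL^2(\omega^e)^m$.
\end{itemize}
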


\begin{proof}
    Set $q\eqdef 1-\frac{\eps}{R}<1$ and 
    \begin{equ}
        x\eqdef\bone-\frac{\rho}{R} \;.
    \end{equ}
    Then $0\leqslant x\leqslant q\bone$ and
    \begin{equ}
        \log(\rho)=(\log R)\bone-\sum_{n=1}^\infty\frac{x^n}{n} \;.
    \end{equ}
    By the Leibniz rule,
    \begin{equ}
        \overline{\nabla}(x^n)=\sum_{k=0}^{n-1}x^k(\overline{\nabla}x)x^{n-1-k} = \sum_{k=0}^{n-1}(\overline{\nabla}x) x^k \otimes x^{n-1-k} \;,
    \end{equ}
    and hence
    \begin{equ}
        \frac{1}{n}\left\|\overline{\nabla}(x^n)\right\|_{\CL^2(\omega^e)^m} \leqslant q^{n-1}\left\|\overline{\nabla}x\right\|_{\CL^2(\omega^e)^m} \;.
    \end{equ}
    Thus the logarithmic series and its termwise gradient converge in their respective $\CL^2$-spaces. Closedness of $\overline{\nabla}$ shows that $\log(\rho)\in D(\overline{\nabla})$ and permits termwise differentiation. Since $\rho^{\op}\otimes\bone$ and $\bone\otimes\rho$ commute, summing the resulting bivariate power series gives \eqref{eq:logarithmic_chain_rule}.
\end{proof}

To compensate for the $\logd$ factor produced by differentiating $\log$, we define for $\rho\in\CM_+$ 
\begin{equ}[eq:logarithmic_mean_tensor]
    \Theta_\rho \eqdef \int_0^1 (\rho^{\op})^s \otimes \rho^{1-s} \,\d s \in (\CM^e)_+\;.
\end{equ}
If $\rho\geqslant\eps\bone$ for some $\eps>0$, joint functional calculus for the commuting operators gives
\begin{equ}
    \Theta_\rho = \left(\logd(\rho^{\op}\otimes\bone,\bone\otimes\rho)\right)^{-1}\;.
\end{equ}

\begin{proposition}[Strong Gradient Estimate Implies the Logarithmic-Mean Estimate]\label{prop:tensor-bridge} 
    Recall $\CM^e$, $\omega^e$, and $P_t^e$ from \eqref{eq:completed_enveloping_algebra} and \eqref{eq:tensor_semigroup}. Suppose that for some $K\in\R$, the estimate
    \begin{equ}[eq:completed-strong]
        \tilde\Gamma(P_t a) \leqslant e^{-2Kt}P_t^e\bigl(\tilde\Gamma(a)\bigr) \;,
    \end{equ}
    holds in $\CL^1(\omega^e)_+$ for every $t \geqslant 0$ and $a \in D(\CE)$. For $\rho\in\CM_+$, define
    \begin{equ}[eq:weighted_gradient_norm]
        \|\overline{\nabla}a\|_\rho^2 \eqdef \omega^e\!\left(\Theta_\rho\tilde\Gamma(a)\right) = \Braket{\overline{\nabla}a, (\overline{\nabla}a)\Theta_\rho}_{\CL^2(\omega^e)^m} \;.
    \end{equ}
    Then
    \begin{equ}[eq:lmge]
        \|\overline{\nabla}P_t a\|_\rho^2 \leqslant e^{-2Kt} \|\overline{\nabla}a\|_{P_t\rho}^2 \;.
    \end{equ}
\end{proposition}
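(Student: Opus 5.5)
The estimate \eqref{eq:lmge} should follow from three ingredients: pairing \eqref{eq:completed-strong} against the bounded positive weight $\Theta_\rho$, moving $P_t^e$ onto the weight by symmetry, and an operator inequality $P_t^e(\Theta_\rho)\leqslant\Theta_{P_t\rho}$.

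\emph{Step 1 (pairing).} For $\Theta\in(\CM^e)_+$ and $X\in\CL^1(\omega^e)_+$, traciality gives
\begin{equ}
\omega^e(\Theta X)=\omega^e(\Theta^{1/2}X\Theta^{1/2})\geqslant 0\;.
\end{equ}
Apply this with $\Theta=\Theta_\rho$, which lies in $(\CM^e)_+$ because $\rho$ is bounded, and with $X=e^{-2Kt}P_t^e\tilde\Gamma(a)-\tilde\Gamma(P_ta)\geqslant0$. This yields
\begin{equ}
\|\overline{\nabla}P_ta\|_\rho^2\leqslant e^{-2Kt}\,\omega^e\bigl(\Theta_\rho\,P_t^e\tilde\Gamma(a)\bigr)\;.
\end{equ}

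\emph{Step 2 (symmetry).} Using the $\CL^1$--$\CM^e$ symmetry \eqref{eq:tensor_semigroup_L1_symmetry} with $A=\tilde\Gamma(a)\in\CL^1(\omega^e)$ and $B=\Theta_\rho\in\CM^e$, the right-hand side becomes $e^{-2Kt}\omega^e\bigl(P_t^e(\Theta_\rho)\,\tilde\Gamma(a)\bigr)$. By Step 1 again, now with $X=\tilde\Gamma(a)\geqslant0$, it then suffices to show
\begin{equ}
P_t^e(\Theta_\rho)\leqslant\Theta_{P_t\rho}\qquad\text{in }\CM^e\;.
\end{equ}

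\emph{Step 3 (operator inequality; main step).} First reduce to $\rho\geqslant\eps\bone$ by replacing $\rho$ with $\rho+\eps\bone$. The map $\rho\mapsto\Theta_\rho$ is norm-continuous on $\CM_+$, since $x\mapsto x^s$ is operator-norm continuous uniformly in $s\in[0,1]$ on bounded sets, and $P_t$ is contractive; so we may let $\eps\downarrow0$ at the end, with $\omega^e(\,\cdot\,\tilde\Gamma(a))$ continuous in norm. For invertible $\rho$, set $A=\rho^{\op}\otimes\bone$ and $B=\bone\otimes\rho$. These commute, so
\begin{equ}
(\rho^{\op})^s\otimes\rho^{1-s}=A^{s}B^{1-s}=B\,\#_s\,A\;,
\end{equ}
the Kubo--Ando weighted geometric mean. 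The map $P_t^e=P_t^{\op}\otimes P_t$ is unital and (completely) positive, and
\begin{equ}
P_t^e(A)=(P_t\rho)^{\op}\otimes\bone\;,\qquad P_t^e(B)=\bone\otimes P_t\rho\;.
\end{equ}
Ando's inequality $\Phi(B\#_sA)\leqslant\Phi(B)\#_s\Phi(A)$ for positive unital maps $\Phi$ then gives
\begin{equ}
P_t^e\bigl((\rho^{\op})^s\otimes\rho^{1-s}\bigr)\leqslant\bigl((P_t\rho)^{\op}\bigr)^s\otimes(P_t\rho)^{1-s}\;.
\end{equ}
The right-hand side is again a product of commuting operators, which is why it takes this form. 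Integrating over $s\in[0,1]$ yields $P_t^e(\Theta_\rho)\leqslant\Theta_{P_t\rho}$; normality of $P_t^e$ justifies exchanging $P_t^e$ with the Bochner integral.

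I expect Ando's inequality to be the crux. If it is not taken as known, I would derive it from the Choi--Davis--Jensen inequality applied to the operator-concave function $x\mapsto x^s$ in the perspective representation
\begin{equ}
B\#_sA=B^{1/2}\bigl(B^{-1/2}AB^{-1/2}\bigr)^sB^{1/2}\;,
\end{equ}
using the normalised map
\begin{equ}
X\mapsto \Phi(B)^{-1/2}\,\Phi\bigl(B^{1/2}XB^{1/2}\bigr)\,\Phi(B)^{-1/2}\;,
\end{equ}
which is unital, positive, and completely positive, so Choi--Davis--Jensen applies. This is the ``complete positivity plus Choi--Davis--Jensen'' route announced in the introduction. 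Remaining technical points are routine: $\Theta_\rho$ is bounded, and all pairings are between $\CM^e$ and $\CL^1(\omega^e)$.
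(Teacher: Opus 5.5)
Your proof is correct. Steps 1 and 2, pairing against the bounded positive weight via traciality and then moving $P_t^e$ onto $\Theta_\rho$ with \eqref{eq:tensor_semigroup_L1_symmetry}, are the same as in the paper. The routes differ only in how you prove the key comparison $P_t^e(\Theta_\rho)\leqslant\Theta_{P_t\rho}$.

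\textbf{The paper's route.} The paper uses two facts: the integrand $(\rho^s)^{\op}\otimes\rho^{1-s}$ is an elementary tensor, and $P_t^e=P_t^{\op}\otimes P_t$ acts on it factorwise. It therefore needs only the one-variable Choi--Davis--Jensen inequalities $P_t(\rho^s)\leqslant(P_t\rho)^s$ and $P_t(\rho^{1-s})\leqslant(P_t\rho)^{1-s}$ on $\CM$. These are combined through the splitting $A'\otimes B'-A\otimes B=(A'-A)\otimes B'+A\otimes(B'-B)$, which writes the difference as a sum of tensor products of positive elements.

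\textbf{Your route.} You view the integrand as the weighted geometric mean of the commuting pair $\rho^{\op}\otimes\bone$, $\bone\otimes\rho$, and apply Ando's inequality to $P_t^e$ as a map on all of $\CM^e$. This needs positivity of $P_t^e$ on $\CM^e$, which holds here because $P_t$ is completely positive. It also needs the normalised-perspective argument if Ando's inequality is derived from Choi--Davis--Jensen.

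\textbf{What each buys.} The paper's argument is more elementary. Yours is more general: it only uses that $P_t^e(\rho^{\op}\otimes\bone)$ and $P_t^e(\bone\otimes\rho)$ commute, not that $P_t^e$ acts factorwise on the integrand. It also works verbatim for any Kubo--Ando operator mean in place of the logarithmic one. In fact $\Theta_\rho$ is itself the Kubo--Ando logarithmic mean of that commuting pair. So a single application of Ando's inequality for operator means would let you skip the $s$-integration altogether.

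\textbf{One correction in your $\varepsilon$-reduction.} The map $x\mapsto x^s$ is not norm-continuous uniformly in $s\in[0,1]$ on bounded subsets of $\CM_+$. For example, with $x=\delta\bone$ and $y=0$ one has $\|x^s-y^s\|=\delta^s\to1$ as $s\downarrow0$. The conclusion you need, continuity of $\rho\mapsto\Theta_\rho$, is still true. One way to see it is the bound $\|x^s-y^s\|\leqslant\|x-y\|^s$ for $x,y\geqslant0$ and $0<s\leqslant1$, together with
\begin{equation*}
\int_0^1\bigl(\delta^s+\delta^{1-s}\bigr)\,\d s=\frac{2(1-\delta)}{|\log\delta|}\longrightarrow0 \qquad (\delta\downarrow0)\;.
\end{equation*}
Alternatively, use pointwise convergence for $s\in(0,1)$ plus dominated convergence. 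This is precisely the estimate the paper uses to pass from $\rho+\varepsilon\bone$ to $\rho$.
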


\begin{proof}
    
    We start by proving the comparison
    \begin{equ}[eq:theta-comparison]
        P_t^e(\Theta_\rho) \leqslant \Theta_{P_t\rho} \;.
    \end{equ}
    First suppose that $\rho\geqslant\eps\bone$ for some $\eps>0$. For $0<s<1$, the function $r\mapsto r^s$ is operator concave. The Choi--Davis--Jensen inequality for the unital, completely positive map $P_t$ therefore gives
    \begin{equ}[eq:power-jensen]
        P_t(\rho^s) \leqslant (P_t\rho)^s \;, \qquad P_t(\rho^{1-s}) \leqslant (P_t\rho)^{1-s} \;.
    \end{equ}
    See \cite[Theorem~1.20, Corollary~1.22(i)]{PecaricFurutaMicicSeo2013}.
    Combining these inequalities in the spatial tensor product yields
    \begin{equs}
        {}& \bigl((P_t\rho)^s\bigr)^{\op}\otimes(P_t\rho)^{1-s} - \bigl(P_t(\rho^s)\bigr)^{\op}\otimes P_t(\rho^{1-s}) =\\
        {}& \hspace*{4cm}\quad= \bigl((P_t\rho)^s-P_t(\rho^s)\bigr)^{\op} \otimes(P_t\rho)^{1-s} +\\
        {}&\hspace*{4cm}\quad\qquad+ \bigl(P_t(\rho^s)\bigr)^{\op} \otimes \bigl((P_t\rho)^{1-s}-P_t(\rho^{1-s})\bigr) \geqslant 0 \;.
    \end{equs}
    Since $P_t^e$ is bounded and \eqref{eq:logarithmic_mean_tensor} is a norm-Bochner integral for invertible $\rho$, integration gives
    \begin{equs}
        P_t^e(\Theta_\rho) &= \int_0^1 \bigl(P_t(\rho^s)\bigr)^{\op} \otimes P_t(\rho^{1-s}) \,\d s \leqslant\\
    &\leqslant \int_0^1 \bigl((P_t\rho)^s\bigr)^{\op} \otimes(P_t\rho)^{1-s} \,\d s = \Theta_{P_t\rho} \;.
    \end{equs}
    If $0\leqslant B\leqslant C$ in $\CL^1(\omega^e)_+$ and $A \in \CM^e_+$, traciality gives
    \begin{equ}
        \omega^e\!\left(A(C-B)\right) = \omega^e\!\left(A^{1/2}(C-B)A^{1/2}\right) \geqslant 0 \;.
    \end{equ}
    Hence \eqref{eq:completed-strong}, \eqref{eq:tensor_semigroup_L1_symmetry} and \eqref{eq:theta-comparison} imply
    \begin{equs}
        \|\overline{\nabla}P_t a\|_\rho^2 &= \omega^e\!\left(\Theta_\rho\tilde\Gamma(P_t a)\right) \leqslant\\
    &\leqslant e^{-2Kt} \omega^e\!\left(\Theta_\rho P_t^e\bigl(\tilde\Gamma(a)\bigr)\right) =\\
    &= e^{-2Kt} \omega^e\!\left(P_t^e(\Theta_\rho)\tilde\Gamma(a)\right) \leqslant\\
    &\leqslant e^{-2Kt} \omega^e\!\left(\Theta_{P_t\rho}\tilde\Gamma(a)\right) = e^{-2Kt} \|\overline{\nabla}a\|_{P_t\rho}^2 \;.
    \end{equs}

    It remains to remove the invertibility assumption. For $\rho\in\CM_+$, set $\rho_\eps\eqdef \rho+\eps\bone$. If $0<\eps<1$, functional calculus gives, for all $s \in (0,1)$,
    \begin{equ}
        \left\|(\rho+\eps\bone)^s-\rho^s\right\| \leqslant \eps^s\;.
    \end{equ}
    The identity $A_\eps\otimes B_\eps-A\otimes B =(A_\eps-A)\otimes B_\eps+A\otimes(B_\eps-B)$ therefore yields
    \begin{equ}
        \left\|\Theta_{\rho_\eps}-\Theta_\rho\right\|\leqslant (1+\|\rho\|)\int_0^1 \left(\eps^s+\eps^{1-s}\right) \,\d s= 2(1+\|\rho\|)\frac{(1-\eps)}{|\log\eps|} \xrightarrow{\eps\downarrow0} 0 \;.
    \end{equ}
    Since $P_t\rho_\eps=P_t\rho+\eps\bone$, the same argument also gives
    \begin{equ}
        \Theta_{P_t\rho_\eps} \longrightarrow \Theta_{P_t\rho} 
    \end{equ}
    in operator norm. Applying the established inequality to $\rho_\eps$ and passing to the limit proves \eqref{eq:lmge} for every bounded $\rho\geqslant0$.
\end{proof}

For $f\in\CL^1(\omega)_+$, define its extended-valued entropy by
\begin{equ}[eq:homogeneous_entropy]
    \Ent_\omega(f) \eqdef \omega(f\log f) - \omega(f)\log\omega(f) \;,
\end{equ}
where $0\log0\eqdef 0$. A density is an element $\rho\in\CL^1(\omega)_+$ with $\omega(\rho)=1$, in which case
\begin{equ}
    \Ent_\omega(\rho) = \omega(\rho\log(\rho)) \;.
\end{equ}

For bounded $\rho\in D(\overline{\nabla})_+$, we define the relaxed Fisher information by
\begin{equ}[eq:relaxed_fisher_bounded]
    \CI_\omega(\rho) \eqdef \lim_{\eps\downarrow0} \CE\left(\rho,\log(\rho+\eps\bone)\right) \in [0,\infty] \;.
\end{equ}
The divided-difference kernels increase as $\eps \downarrow 0$, so the limit exists. For bounded $\rho\in\CM_+\setminus D(\overline{\nabla})$, set $\CI_\omega(\rho)=\infty$. For arbitrary $\rho\in\CL^1(\omega)_+$, set
\begin{equ}[eq:relaxed_fisher]
    \CI_\omega(\rho) \eqdef \liminf_{R \to \infty} \CI_\omega(\rho\wedge R) \;,
\end{equ}
where the quantity on the right is the bounded Fisher information from \eqref{eq:relaxed_fisher_bounded}. By \eqref{eq:DirProp}, if $\rho\in D(\overline{\nabla})_+$, then $\rho\wedge R\in D(\overline{\nabla})$ for every $R\geqslant0$. The bounded and extended definitions agree when $\rho$ is bounded, because $\rho\wedge R=\rho$ for $R\geqslant\|\rho\|$. Moreover, for $c>0$,
\begin{equ}[eq:fisher_homogeneity]
    \CI_\omega(c\rho)=c\CI_\omega(\rho) \;.
\end{equ}
Indeed, the bounded definition is homogeneous after rescaling $\eps$, and $(c\rho)\wedge R=c(\rho\wedge(R/c))$; the use of a continuous truncation parameter in \eqref{eq:relaxed_fisher} is important here.

This is a slight modification of the standard relaxation of the Fisher information, cf.\ \cite[Section~5]{Wirth2018}, which we employ to bypass proving monotonicity of $s \mapsto \CI_{\omega}(\rho \wedge s)$ and is sufficient for our application. 

\begin{definition}[Modified Logarithmic Sobolev Inequality]
    We say that $(P_t)_{t\geqslant0}$ satisfies the modified logarithmic Sobolev inequality (MLSI) with constant $\alpha>0$ if
    \begin{equ}[eq:definition_MLSI]
        \alpha\Ent_\omega(\rho) \leqslant \CI_\omega(\rho) \;,
    \end{equ}
    for every density $\rho$.    
\end{definition}

\begin{theorem}[Logarithmic-Mean Gradient Estimate Implies MLSI]\label{thm:lmge_implies_mlsi} 
    Suppose that $K>0$ and that \eqref{eq:lmge} holds for every $a\in D(\overline{\nabla})$, every bounded $\rho\in\CM_+$ and every $t\geqslant0$. Then $(P_t)_{t\geqslant0}$ satisfies the modified logarithmic Sobolev inequality with constant $2K$; equivalently, every density satisfies
\begin{equ}[eq:MLSI_nc]
    \Ent_\omega(\rho) \leqslant \frac{1}{2K}\CI_\omega(\rho) \;.
\end{equ}
\end{theorem}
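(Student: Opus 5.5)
Following the commutative model of Lemmata~\ref{lem:fisher_variational}, \ref{lem:fisher_decay} and Theorem~\ref{thm:LSI_Langevin}, we first treat densities $\rho$ lying in $D(\overline{\nabla})$ with $\eps\bone\leqslant\rho\leqslant R\bone$, and reach general densities by approximation at the end.

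\emph{Variational formula.} For such $\rho$, Lemma~\ref{lem:logarithmic_chain_rule} gives $\log\rho\in D(\overline{\nabla})$ with $\overline{\nabla}\rho=(\overline{\nabla}\log\rho)\,\Theta_\rho$, so $\CI_\omega(\rho)=\CE(\rho,\log\rho)=\|\overline{\nabla}\log\rho\|_\rho^2$. Expanding $\|\overline{\nabla}(a-\log\rho)\|_\rho^2\geqslant0$ and using $\Theta_\rho\geqslant0$ gives
\begin{equ}
\CI_\omega(\rho)=\sup_{a\in D(\overline{\nabla})}\Bigl\{2\Re\,\CE(\rho,a)-\|\overline{\nabla}a\|_\rho^2\Bigr\}\;,
\end{equ}
with the supremum attained at $a=\log\rho$. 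It holds verbatim with $\rho$ replaced by $P_s\rho$, which is again bounded between $\eps\bone$ and $R\bone$ by unital positivity and lies in $D(\overline{\nabla})$ by Proposition~\ref{prop:DirichletForm}.

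\emph{Fisher decay.} Symmetry gives $\CE(P_s\rho,a)=\CE(\rho,P_sa)$. The hypothesis \eqref{eq:lmge} gives $\|\overline{\nabla}a\|_{P_s\rho}^2\geqslant e^{2Ks}\|\overline{\nabla}P_sa\|_\rho^2$. Substituting $b=e^{2Ks}P_sa$, exactly as in Lemma~\ref{lem:fisher_decay}, yields $\CI_\omega(P_s\rho)\leqslant e^{-2Ks}\CI_\omega(\rho)$.

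\emph{Entropy dissipation.} For $\rho$ in $D(\Lin_V)$ (and bounded between $\eps\bone$ and $R\bone$), differentiating $s\mapsto\omega(P_s\rho\log P_s\rho)$ is justified by functional calculus on $[\eps,R]$. Since $\omega(\Lin_VP_s\rho)=0$, it gives $\frac{\d}{\d s}\omega(P_s\rho\log P_s\rho)=-\CE(P_s\rho,\log P_s\rho)=-\CI_\omega(P_s\rho)$. The theorem's hypothesis implies the weak gradient estimate via $\rho=\bone$, so ergodicity holds; thus $P_s\rho\to\bone$ in $\CL^2$ while staying in $[\eps,R]$, hence $\Ent_\omega(P_s\rho)\to0$. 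Integrating over $s\in[0,\infty)$ and using the decay gives $\Ent_\omega(\rho)\leqslant\frac1{2K}\CI_\omega(\rho)$. Passing from $D(\Lin_V)$ to $D(\overline{\nabla})$ uses $\rho_\delta=P_\delta\rho$ with $\delta\downarrow0$: the entropy is continuous there, and $\CI_\omega(P_\delta\rho)\leqslant\CI_\omega(\rho)$ by the decay step.

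\emph{Approximation (main obstacle).} Take a general density $\rho$ with $\CI_\omega(\rho)<\infty$ and bounded $\rho\in D(\overline{\nabla})_+$. Apply the above to $\rho+\eps\bone$ (normalised). By \eqref{eq:fisher_homogeneity} and $\Ent_\omega(cf)=c\Ent_\omega(f)$, the inequality holds in the homogeneous form $\Ent_\omega(f)\leqslant\frac1{2K}\CI_\omega(f)$ for all such $f$. We need $\CE(\rho+\eps,\log(\rho+\eps))\leqslant\CE(\rho,\log(\rho+\eps))\to\CI_\omega(\rho)$. This follows because $\overline{\nabla}(\rho+\eps)=\overline{\nabla}\rho$, combined with the monotone divided-difference kernels, while the entropy converges by continuity of $x\log x$. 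Finally, for unbounded $\rho$ apply the homogeneous inequality to $\rho\wedge R$ and take $\liminf_{R\to\infty}$: $\Ent_\omega(\rho\wedge R)\to\Ent_\omega(\rho)$ by monotone convergence of $\omega(\rho\wedge R\log(\rho\wedge R))$ (lower semicontinuity suffices), together with $\omega(\rho\wedge R)\to1$. The delicate points are the $\eps$-limit interchange and this lower semicontinuity of entropy, which is exactly why the relaxed Fisher information was defined via $\liminf$.
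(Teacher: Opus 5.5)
Your proposal is correct and follows the paper's proof essentially step for step: regular densities first, the completed-square variational formula, Fisher-information decay via $\CE(P_s\rho,a)=\CE(\rho,P_sa)$ and \eqref{eq:lmge}, integration of the entropy-dissipation identity, and approximation by $\varepsilon$-shifts and truncations. Your homogeneous-form $\liminf$ argument is a harmless reorganisation of the paper's normalised sequence $\rho_j$.

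One correction: \eqref{eq:lmge} with weight $\bone$ gives only the integrated decay $\CE(P_ta)\leqslant e^{-2Kt}\CE(a)$, not the operator-valued weak gradient estimate. That is all you need, though. Together with the closed Poincar\'{e} estimate \eqref{eq:closed_PI_for_MLSI}, it yields $\|P_s\rho-\bone\|_{\CL^2(\omega)}\to0$, which is exactly how the paper handles the endpoint.
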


\begin{proof}
    We first consider a density $\rho\in D(\overline\nabla)$ such that, for some $\eps,R>0$, $\eps\bone \leqslant \rho \leqslant R\bone$. Lemma~\ref{lem:logarithmic_chain_rule} then applies.
    Consequently,
    \begin{equ}[eq:log_mean_chain_rule]
        (\overline{\nabla}\log(\rho))\Theta_\rho = \overline{\nabla}\rho \;.
    \end{equ}
    For vectors $a,b \in \CL^2(\omega^e)^m$, write
    \begin{equ}
        \Braket{a,b}_\rho \eqdef \Braket{a,b\Theta_\rho}_{\CL^2(\omega^e)^m} \;.
    \end{equ}
    Equation~\eqref{eq:log_mean_chain_rule} and traciality of $\omega^e$ yield
    \begin{equ}[eq:energy_log_mean_duality]
        \CE(\rho,a) = \Braket{\overline{\nabla}\rho, \overline{\nabla}a}_{\CL^2(\omega^e)^m} = \Braket{\overline{\nabla}\log(\rho), \overline{\nabla}a}_\rho \;.
    \end{equ}
    In particular,
    \begin{equ}
        \CI_\omega(\rho) = \CE(\rho,\log(\rho)) = \|\overline{\nabla}\log(\rho)\|_\rho^2 \;.
    \end{equ}
    Completing the square in \eqref{eq:energy_log_mean_duality} gives the variational representation
    \begin{equs}[eq:fisher_dual_nc]
        \CI_\omega(\rho) &= \sup_{\substack{a\in D(\overline{\nabla})\\ a=a^*}} \left\{2\operatorname{Re}\CE(\rho,a) - \|\overline{\nabla}a\|_\rho^2 \right\} \;.
    \end{equs}
    Indeed, the expression inside the supremum equals
    \begin{equ}
        \CI_\omega(\rho) - \|\overline{\nabla}a - \overline{\nabla}\log(\rho)\|_\rho^2 \;,
    \end{equ}
    and equality is attained at $a=\log(\rho)$.

    We next deduce Fisher-information decay. Since $P_t$ is self-adjoint and commutes with the functional calculus of $-\Lin_V$,
    \begin{equs}[eq:form_semigroup_duality]
        \CE(P_t\rho,a) &= \Braket{(-\Lin_V)^{1/2}P_t\rho, (-\Lin_V)^{1/2}a}_{\CL^2(\omega)} =\\
    &= \Braket{(-\Lin_V)^{1/2}\rho, (-\Lin_V)^{1/2}P_ta}_{\CL^2(\omega)} = \CE(\rho,P_ta) \;.
    \end{equs}
    Positivity and unitality give $\eps\bone\leqslant P_t\rho\leqslant R\bone$. Applying \eqref{eq:fisher_dual_nc}, \eqref{eq:form_semigroup_duality} and \eqref{eq:lmge}, we obtain
    \begin{equs}[eq:fisher_decay_nc]
        \CI_\omega(P_t\rho) &= \sup_{\substack{a\in D(\overline{\nabla})\\ a=a^*}} \left\{2\operatorname{Re}\CE(P_t\rho,a) - \|\overline{\nabla}a\|_{P_t\rho}^2 \right\} \leqslant\\
    &\leqslant \sup_{\substack{a\in D(\overline{\nabla})\\ a=a^*}} \left\{2\operatorname{Re}\CE(\rho,P_ta) - e^{2Kt}\|\overline{\nabla}P_ta\|_\rho^2 \right\} =\\
    &= e^{-2Kt} \sup_{\substack{a\in D(\overline{\nabla})\\ a=a^*}} \left\{2\operatorname{Re} \CE\left(\rho,e^{2Kt}P_ta\right) - \left\| \overline{\nabla}\left(e^{2Kt}P_ta\right) \right\|_\rho^2 \right\} \leqslant\\
    &\leqslant e^{-2Kt}\CI_\omega(\rho) \;.
    \end{equs}

    We now identify the entropy at the endpoint of the flow. Since $P_t\bone=\bone$ and $\Theta_\bone=\bone^{\op}\otimes\bone$, applying \eqref{eq:lmge} with weight $\bone$ gives
    \begin{equ}
        \|\overline{\nabla}P_t\rho\|_{\CL^2(\omega^e)^m}^2 \leqslant e^{-2Kt} \|\overline{\nabla}\rho\|_{\CL^2(\omega^e)^m}^2 \;.
    \end{equ}
    Since $\omega(P_t\rho)=1$, \eqref{eq:closed_PI_for_MLSI} implies
    \begin{equ}[eq:L2_density_decay]
        \|P_t\rho-\bone\|_{\CL^2(\omega)}^2 \leqslant C_{\mathrm P}e^{-2Kt} \|\overline{\nabla}\rho\|_{\CL^2(\omega^e)^m}^2 \xrightarrow{t\to\infty} 0 \;.
    \end{equ}
    The scalar inequality $r\log r-r+1 \leqslant (r-1)^2$ for $r\geqslant0$ and functional calculus give
    \begin{equ}
        0 \leqslant \Ent_\omega(P_t\rho) \leqslant \|P_t\rho-\bone\|_{\CL^2(\omega)}^2 \xrightarrow{t\to\infty} 0 \;.
    \end{equ}

    Set $\rho_t\eqdef P_t\rho$. The power-series expansion of the logarithm shows that $t\mapsto\Ent_\omega(\rho_t)$ is locally absolutely continuous and, for almost every $t>0$,
    \begin{equ}[eq:entropy_dissipation_nc]
        \frac{\d}{\d t}\Ent_\omega(\rho_t) = \omega\left((\bone+\log(\rho_t))\Lin_V\rho_t\right) = -\CE(\log(\rho_t),\rho_t) = -\CI_\omega(\rho_t) \;.
    \end{equ}
    Here the final equality uses the symmetry of $\CE$ and $\CI_\omega(\rho_t)=\CE(\rho_t,\log(\rho_t))$. For $0<\delta<T$, integration of \eqref{eq:entropy_dissipation_nc} gives
    \begin{equ}
        \Ent_\omega(P_\delta\rho)-\Ent_\omega(P_T\rho)=\int_\delta^T\CI_\omega(P_t\rho)\,\d t \;.
    \end{equ}
    Strong continuity gives $P_\delta\rho\to\rho$ in $\CL^2(\omega)$ as $\delta\downarrow0$, while positivity and unitality give $\eps\bone\leqslant P_\delta\rho\leqslant R\bone$. Polynomial approximation of $r\mapsto r\log r$ on $[\eps,R]$ therefore yields $\Ent_\omega(P_\delta\rho)\to\Ent_\omega(\rho)$. Since $\CI_\omega(P_t\rho)\geqslant0$, monotone convergence now gives, for every $T>0$,
    \begin{equ}
        \Ent_\omega(\rho)-\Ent_\omega(P_T\rho) = \int_0^T \CI_\omega(P_t\rho) \,\d t \leqslant \CI_\omega(\rho) \int_0^T e^{-2Kt}\,\d t = \frac{1-e^{-2KT}}{2K} \CI_\omega(\rho) \;.
    \end{equ}
    Letting $T\to\infty$ proves \eqref{eq:MLSI_nc} for regular densities.

    Finally, let $\rho$ be an arbitrary density with $\CI_\omega(\rho)<\infty$. By \eqref{eq:relaxed_fisher}, there is a sequence $R_j\to\infty$ such that
    \begin{equ}
        \CI_\omega(\rho\wedge R_j)\xrightarrow{j\to\infty}\CI_\omega(\rho) \;.
    \end{equ}
    In particular, $\rho\wedge R_j\in D(\overline{\nabla})$. Set
    \begin{equ}
        c_j \eqdef \omega(\rho\wedge R_j) \;, \qquad \rho_j \eqdef \frac{\rho\wedge R_j}{c_j} \;, \qquad \rho_{j,\eps} \eqdef \frac{\rho_j+\eps\bone}{1+\eps} \;.
    \end{equ}
    Each $\rho_{j,\eps}$ is a bounded uniformly positive density, and in particular $\rho_{j,\eps},\log(\rho_{j,\eps}) \in D(\overline{\nabla})$.
    Homogeneity of the form and the definition \eqref{eq:relaxed_fisher_bounded} yield
    \begin{equ}
        \CI_\omega(\rho_{j,\eps}) = \frac{1}{1+\eps} \CE\left(\rho_j, \log(\rho_j+\eps\bone)\right) \xrightarrow{\eps\downarrow0} \CI_\omega(\rho_j) \;.
    \end{equ}
    Moreover, bounded functional calculus gives
    \begin{equ}
        \Ent_\omega(\rho_{j,\eps}) \xrightarrow{\eps\downarrow0} \Ent_\omega(\rho_j) \;.
    \end{equ}
    Applying the regular-density inequality and then sending $\eps\downarrow0$ gives
    \begin{equ}
        \Ent_\omega(\rho_j) \leqslant \frac{1}{2K}\CI_\omega(\rho_j) \;.
    \end{equ}
    Since $c_j\to1$, \eqref{eq:fisher_homogeneity} and the choice of $(R_j)_j$ imply
    \begin{equ}
        \CI_\omega(\rho_j)=\frac{\CI_\omega(\rho\wedge R_j)}{c_j}\xrightarrow{j\to\infty}\CI_\omega(\rho) \;.
    \end{equ}
    Finally,
    \begin{equ}
        \Ent_\omega(\rho_j) = \frac{\omega\left((\rho\wedge R_j)\log(\rho\wedge R_j)\right)}{c_j} - \log c_j \xrightarrow{j\to\infty} \Ent_\omega(\rho) \;.
    \end{equ}
    Passing to the limit proves \eqref{eq:MLSI_nc} for every density. If $\CI_\omega(\rho)=\infty$, the inequality is automatic.
\end{proof}

\begin{corollary}[Homogeneous Form of the Modified Logarithmic Sobolev Inequality]
    \label{cor:homogeneous_mlsi} For every $f\in\CL^1(\omega)_+$,
\begin{equ}
    \omega(f\log f) - \omega(f)\log\omega(f) \leqslant \frac{1}{2K}\CI_\omega(f) \;.
\end{equ}
\end{corollary}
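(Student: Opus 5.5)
The corollary follows from Theorem~\ref{thm:lmge_implies_mlsi} by homogeneity. Both sides of the inequality are positively $1$-homogeneous in $f$, so the plan is to normalise $f$ to a density and rescale. We work under the hypotheses of Theorem~\ref{thm:lmge_implies_mlsi}, which are implicit in the corollary, with the same $K>0$.

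First we dispose of the degenerate case. If $\omega(f)=0$, then faithfulness of $\omega$ and $f\geqslant0$ give $f=0$. The left-hand side then vanishes with the convention $0\log0=0$. The right-hand side is $\CI_\omega(0)=0$, since $0\wedge R=0$ and $\CE(0,\cdot)=0$. So the inequality holds trivially.

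Now suppose $c\eqdef\omega(f)>0$ and set $\rho\eqdef f/c$, which is a density. For the entropy, $\omega(f\log f)=c\,\omega(\rho\log\rho)+c\log c\,\omega(\rho)$ by functional calculus, since $\log(c\rho)=\log c\,\bone+\log\rho$ on the support. Hence
\begin{equ}
\Ent_\omega(f)=c\,\Ent_\omega(\rho)\;,
\end{equ}
and this holds in $(-\infty,\infty]$. For the Fisher information, \eqref{eq:fisher_homogeneity} gives $\CI_\omega(f)=c\,\CI_\omega(\rho)$; this is exactly where the continuous truncation parameter in \eqref{eq:relaxed_fisher} is used. Multiplying \eqref{eq:MLSI_nc} for $\rho$ by $c$ then yields the claim.

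The only point needing care is the extended-valued bookkeeping. If $\CI_\omega(f)=\infty$, there is nothing to prove. Otherwise $\CI_\omega(\rho)<\infty$, and the theorem gives $\Ent_\omega(\rho)<\infty$. The identity $\omega(f\log f)=c\,\omega(\rho\log\rho)+c\log c$ is then an identity between finite quantities: the negative part of $r\log r$ is bounded, so the split is legitimate. Beyond this, no obstacle is expected.
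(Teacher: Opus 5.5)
Your proposal is correct and follows the paper's proof. You normalise to the density $\rho=f/\omega(f)$, apply the theorem, and rescale using the homogeneity of the entropy and of the relaxed Fisher information \eqref{eq:fisher_homogeneity}; the case $\omega(f)=0$ is disposed of by faithfulness, and your extra bookkeeping for extended values is harmless and slightly more careful than the paper's.
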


\begin{proof}
    
    If $\omega(f)>0$, apply Theorem~\ref{thm:lmge_implies_mlsi} to $\rho=f/\omega(f)$ and use \eqref{eq:fisher_homogeneity} together with the homogeneity of entropy:

\begin{equ}
        \Ent_\omega(f) = \omega(f)\Ent_\omega(\rho) \;, \qquad \CI_\omega(f) = \omega(f)\CI_\omega(\rho) \;. 
\end{equ}

    If $\omega(f)=0$, faithfulness implies $f=0$. 
\end{proof}

\begin{remark}[Relation to Gross' Inequality] 
    Equation~\eqref{eq:MLSI_nc} is the density-form modified logarithmic Sobolev inequality, whose dissipation is $\CE(\rho,\log(\rho))$. It is not the Gross logarithmic Sobolev inequality for $a^*a$ and therefore does not, by itself, imply hypercontractivity. A hypercontractivity conclusion requires a separate Gross inequality or an additional comparison between the two dissipation functionals. 
\end{remark} 

\bibliographystyle{amsplain}
\bibliography{NonCommutativeEntropy}

\end{document}